\newcommand{\Wip}{\mathrm{A}_+^1}
\newcommand{\Id}{I}
\newcommand{\vanish}[1]{\relax}
\newcommand{\suchthat}{\,\,|\,\,}
\newcommand{\N}{\mathbb{N}}
\newcommand{\R}{\mathbb{R}}
\newcommand{\C}{\mathbb{C}}
\newcommand{\ud}{\mathrm{d}}
\newcommand{\ue}{\mathrm{e}}
\newcommand{\eM}{\mathrm{M}}
\newcommand{\vphi}{\varphi}
\newcommand{\Lap}{\mathcal{L}}
\DeclareMathOperator{\re}{Re}
\DeclareMathOperator{\im}{Im}
\newcommand{\konj}[1]{\overline{#1}}
\newcommand{\abs}[1]{\left\vert#1\right\vert}
\newcommand{\Ce}{\mathrm{C}}
\newcommand{\ohne}{\setminus}
\newcommand{\Dann}{\Longrightarrow}
\newcommand{\nach}{\circ}
\newcommand{\pfeil}{\longrightarrow}
\newcommand{\tpfeil}{\longmapsto}
\DeclareMathOperator{\dom}{dom}
\DeclareMathOperator{\ran}{ran}
\newcommand{\cls}[1]{\overline{#1}}
\DeclareMathOperator{\Lin}{\mathcal{L}}
\newcommand{\norm}[2][\relax]{%
   \ifx#1\relax \ensuremath{\left\Vert#2\right\Vert}
   \else \ensuremath{\left\Vert#2\right\Vert_{#1}}
   \fi}
\newcommand{\sprod}[2]{\ensuremath{%
  \setbox0=\hbox{\ensuremath{#2}}
  \dimen@\ht0
  \advance\dimen@ by \dp0
  \left(\left.#1\rule[-\dp0]{0pt}{\dimen@}\,\right|#2\hspace{1pt}\right)}}
\newcommand{\fourier}[1]{\widehat{#1}}
\newcounter{aufzi}
\newenvironment{aufzi}{\begin{list}{ {\upshape\alph{aufzi})}}{
        \usecounter{aufzi}
        \topsep1ex
        \parsep0cm
        \itemsep1ex
        \leftmargin0.8cm
        \labelwidth0.5cm
        \labelsep0.3cm
}}
{\end{list}}
\newcounter{aufzii}
\newenvironment{aufzii}{\begin{list}{\hfill {\upshape 
(\roman{aufzii})}}{
        \usecounter{aufzii}
        \topsep1ex
        \parsep0cm
        \itemsep1ex
        \leftmargin0.8cm
        \labelwidth0.5cm
        \labelsep0.3cm
         \itemindent0cm
}}
{\end{list}}
\newcounter{aufziii}
 \newtheorem{thm}{Theorem}[section]
 \newtheorem{cor}[thm]{Corollary}
 \newtheorem{lemma}[thm]{Lemma}
 \newtheorem{prop}[thm]{Proposition}
 \theoremstyle{definition}
 \theoremstyle{remark}
 \newtheorem{rem}[thm]{Remark}
\newtheorem{exa}[thm]{Example}
\newtheorem{remark}[thm]{Remark}
\numberwithin{equation}{section}
\numberwithin{equation}{section} \numberwithin{theorem}{section}
\newcommand{\Ces}{\mathrm{A}}
\newcommand{\rmo}{\mathrm{o}}
\newcommand{\rmO}{\mathrm{O}}
\begin{document}
\title[Bernstein functions and
rates in mean ergodic theorems]
 {Bernstein functions and  rates in mean ergodic theorems
for operator semigroups}

 \dedicatory{To Michael Lin on the occasion of his retirement}

\author{Alexander Gomilko}
\address{Faculty of Mathematics and Computer Science\\
Nicolas Copernicus University\\
Cho\-pin Str. 12/18\\
87-100 Toru\'n, Poland }

\email{gomilko@mat.uni.torun.pl}



\author{Markus Haase}

\address
{Delft Institute of Applied Mathematics\\
Delft University of Technology\\
P.O. Box 5031\\
2600 GA Delft, The Netherlands}

\email{m.h.a.haase@tudelft.nl}

\author{Yuri Tomilov}
\address{
Institute of Mathematics\\
Polish Academy of Sciences\\
\'Sniadeckich Str. 8\\
00-956 Warszawa, Poland,
and
Faculty of Mathematics and Computer Science\\
Nicolas Copernicus University\\
Chopin Str. 12/18\\
87-100 Torun, Poland\\}

\email{tomilov@mat.uni.torun.pl}


\subjclass{Primary 47A60, 47A35; Secondary 47D03}

\keywords{mean ergodic theorem, rate of convergence, functional
calculus, $C_0$-semigroup}

\date{\today}

\begin{abstract}
We present a functional calculus approach to the study of rates of decay in
mean ergodic theorems for bounded strongly continuous operator semigroups.
A central role is played by
 operators of the form $g(A)$, where $-A$ is the generator
of the semigroup and $g$ is a Bernstein function.
In addition, we  obtain some  new results
on Bernstein functions that are of independent interest.
\end{abstract}

\maketitle

\section{Introduction}

The famous mean ergodic theorem of von Neumann, Riesz, Kakutani, Lorch and
Eberlein states that for a power-bounded operator $T$ on
a reflexive Banach space $X$ the Ces\`aro averages
\[ \Ces_n(T) = \frac{1}{n} {\sum}^{n-1}_{j=0} T^j
\]
converge strongly as $n \to \infty$ to a bounded projection $P$
along $\cls{\ran}(\Id - T)$ onto the space
${\rm fix}(T) = \{ x\in X \suchthat Tx = x\}$
of fixed points. By
works of Butzer and Westphal \cite{BuWe71} and Browder
\cite{Bro58}, for $0 \not=x\in X$ the rate of convergence of $\Ces_n(T)x
\to Px$ cannot be ``better'' than $\rmO(1/n)$,
and this optimal rate happens if and only if $x\in \ran(\Id - T)$.
Moreover, by results of Dunford \cite{Dun43} and Lin
\cite{Lin74a}, in the case that $\ran(\Id - T)$ is not closed
there is no uniform rate of convergence working for all $x\in X$
simultaneously.  However,  one can
ask for conditions on individual vectors $x$ to guarantee a
certain rate, and Kachurovskii established in \cite{Ka96}
---
in the case of a unitary operator $T$ on a Hilbert space ---
connections between certain decay
rates of $\Ces_n(T)x$ and the spectral measure of $x$ with respect to $T$.

In 2001, Derriennic and Lin
--- motivated  by applications to central limit theorems for
Markov chains and the quest for rates in the strong law of large numbers ---
opened a new chapter by
addressing the problem of relating a prescribed decay rate for $\Ces_n(T)x$
with the convergence at $x$ of a certain power series in $T$.
The case of polynomial rates could be  settled already in \cite{DerLin01}
but some pertinent problems remained open, leading to a series
of subsequent papers \cite{AsLi07}, \cite{CoLi03}--\cite{Der06}.
In particular, it was asked in
\cite{AsLi07} whether the (weak) convergence of the power series
$\sum_{n=1}^\infty T^n x/n$ (the so-called {\em one-sided ergodic
Hilbert transform} of $x$) would imply that $\norm{\Ces_n(T)x} =
\rmO(1/ \log n).$ (A spectral characterization for this rate when $T$ is unitary
was given in \cite{Ga98} and \cite{AsLi07},
the case of normal $T$ was settled in \cite{CoLi09}.)
Taking into account the main results from
\cite{CoCuLi10} and \cite{HaTo10}, the question can be
reformulated as whether the Ces\`aro means $ \Ces_n(T)x$ decay
logarithmically if $x \in \dom \left(\log(I-T)\right)$. This
question was recently answered positively by the authors in
\cite{GoHaTo11}. Moreover, based on ideas from functional calculus
theory, a general method was given in \cite{GoHaTo11} to identify subspaces
where certain rates hold.

\medskip

In the present paper the analogous problems  for bounded
$C_0$-semigroups are discussed. Although the topic
is very natural, to the best of our knowledge
the only paper in this direction so far is \cite{KaRe10}. However,
as in the earlier work \cite{Ka96} it is
confined to unitary operators and spectral methods, so its thrust is quite different
from ours. Related results, but in the framework of real interpolation
spaces can be found in \cite{West1998}, \cite{Shaw}, and \cite{BuGe95}.

To set the stage, let $-A$ be the
generator of a bounded $C_0$-semigroup $T:= (T(s))_{s\ge 0}$ on a
complex Banach space $X$. We shall study the asymptotic behaviour of the
Ces\`aro averages
\begin{equation}\label{cesaromean}
\Ce_t(A)x:=\frac{1}{t} \int_0^t T(s)x\,\ud{s} \qquad (x \in X),
\end{equation}
as $t \to \infty$. It is easy to show that for $x, y \in X$
\[ \Ce_t(A)x \to 0 \quad \iff \quad x \in \cls{\ran}(A)
\]
and
\[ \Ce_t(A)x \to y \quad \Dann\quad y\in \ker(A).
\]
Therefore, $\ker(A) \oplus \cls{\ran}(A)$ is precisely the
subspace of $X$ on which the Ces\`aro averages converge strongly,
and the semigroup $(T(s))_{s \ge 0}$ is called {\em mean ergodic}
if  $X=\ker(A) \oplus \cls{\ran}(A)$ or, equivalently, $\Ce_t(A)$
converge strongly on $X$, see \cite[Theorem 18.7.3]{HilPhi}
or \cite[Section V.4]{EN}. A {\em mean ergodic theorem}
provides conditions for a semigroup to be mean ergodic; for instance,
a classical result states that every
bounded $C_0$-semigroup on a reflexive space is mean ergodic
\cite[Example V.4.7]{EN}.

\medskip

In this paper we shall  study  {\em rates of
convergence} for $\Ce_t(A)x$ as $t \to \infty$. Note that
\[ \ker(A) = {\rm fix}(T) := \{ x \in X \suchthat
T(s)x =x \,\, \forall\, s > 0\}.
\]
If $\Ce_t(A)x \to y$ then $y \in \ker(A)$ and hence
$\Ce_t(A)x - y = \Ce_t(A)(x - y)$. Thus,
in the study of rates of the convergence
of Ces\`aro averages one may confine oneself to the convergence
to zero on  $\cls{\ran}(A)$. By restricting
 $A$ to this $T$-invariant
subspace, there is no loss of generality in assuming that $X =
\cls{\ran}(A)$.

Let us recall now some known
facts.

\begin{prop}\label{int.p.rates}
Let $-A$ be the generator of a  bounded $C_0$-semigroup
$(T(s))_{s\geq 0}$ on a Banach space $X.$
Then the following statements hold.
\begin{aufzi}
\item If $\norm{\Ce_t(A)x} = \rmo(1/t)$ as $t \to \infty$, then $x= 0$.
\item If $x\in \ran(A)$ then $\norm{\Ce_t(A)x} = \rmO(1/t)$ as $t \to \infty$,
and the converse is true if $X$ is reflexive.
\item If there exists a positive function $\varphi:(0,\infty)\to (0,\infty)$
such that $\varphi(t) \searrow 0$ as $t\to\infty$, and
$\norm{\Ce_t(A)x} = \rmO(\varphi(t)), \, t \to \infty,$ for every $x\in X$, then $A$
is invertible.
\end{aufzi}
\end{prop}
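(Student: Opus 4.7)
The plan is to work throughout with the primitive
\[
F(t):=\int_0^t T(s)x\,\ud{s}=t\Ce_t(A)x,
\]
which lies in $\dom(A)$ with $AF(t)=x-T(t)x$ and satisfies the cocycle identity $T(s)F(t)=F(s+t)-F(s)$ obtained directly from the semigroup law; set $M:=\sup_{s\ge 0}\norm{T(s)}<\infty$.

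For (a), the hypothesis $\norm{\Ce_t(A)x}=\rmo(1/t)$ is precisely $\norm{F(t)}\to 0$. For each fixed $s\ge 0$ I let $t\to\infty$ in the cocycle identity: the left-hand side is bounded in norm by $M\norm{F(t)}\to 0$, while the right-hand side tends to $-F(s)$ because $F(s+t)\to 0$. Hence $F\equiv 0$, and $x=\lim_{s\searrow 0}F(s)/s=\lim_{s\searrow 0}\Ce_s(A)x=0$ by strong continuity of the semigroup at $0$.

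For (b), the forward direction is direct: if $x=Ay$ with $y\in\dom(A)$ then $F(t)=\int_0^t T(s)Ay\,\ud{s}=y-T(t)y$, giving $\norm{\Ce_t(A)x}\le(1+M)\norm{y}/t$. For the converse on a reflexive $X$, integration by parts in the Laplace transform yields
\[
(\lambda+A)^{-1}x=\lambda\int_0^\infty \ue^{-\lambda t}F(t)\,\ud{t}\qquad(\lambda>0),
\]
and the assumption $\sup_{t\ge 0}\norm{F(t)}=:C<\infty$ gives $\norm{(\lambda+A)^{-1}x}\le C$. Reflexivity then produces $\lambda_n\searrow 0$ with $y_{\lambda_n}:=(\lambda_n+A)^{-1}x$ converging weakly to some $y\in X$; since $Ay_{\lambda_n}=x-\lambda_n y_{\lambda_n}\to x$ in norm and the graph of the closed operator $A$ is weakly closed, $y\in\dom(A)$ with $Ay=x$.

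For (c), I first apply the uniform boundedness principle to the pointwise bounded family $\{\Ce_t(A)/\varphi(t)\}_{t\ge 1}$, using that $\varphi$ is strictly positive to control small $t$ and $\norm{\Ce_t(A)}\le M$ to handle everything else; this upgrades the individual rates to the operator-norm bound $\norm{\Ce_t(A)}=\rmO(\varphi(t))\to 0$. I then fix $t_0$ with $\norm{\Ce_{t_0}(A)}<1$ and set $B:=\frac{1}{t_0}\int_0^{t_0}(t_0-u)T(u)\,\ud{u}$, a bounded operator with range in $\dom(A)$; a Fubini computation based on $A\int_0^s T(u)y\,\ud{u}=y-T(s)y$ for every $y\in X$ yields the identity $AB=I-\Ce_{t_0}(A)$. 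Since $\Ce_{t_0}(A)$ commutes with $A$ on $\dom(A)$ and $(I-\Ce_{t_0}(A))^{-1}$ exists and preserves $\dom(A)$, the operator $B(I-\Ce_{t_0}(A))^{-1}$ is a genuine two-sided bounded inverse of $A$. The main substantive step here -- and the obstacle most requiring care -- is spotting $B$ as the correct ``second primitive'' of the semigroup, and then matching left and right inverses so that the bounded operator produced is truly a two-sided inverse of the unbounded operator $A$.
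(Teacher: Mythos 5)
Your proof is correct, but it is more self-contained than the paper's, which for the substantive parts relies on citations. For (a) the paper invokes the identity
\[
A(\Id+A)^{-1}\,\frac1t\int_0^t s\,\Ce_s(A)x\,\ud{s}=(\Id+A)^{-1}x-(\Id+A)^{-1}\Ce_t(A)x ,
\]
letting $t\to\infty$ and using boundedness of $A(\Id+A)^{-1}$ to conclude $(\Id+A)^{-1}x=0$; your cocycle argument $T(s)F(t)=F(s+t)-F(s)$ with $F(t)=t\Ce_t(A)x$ reaches the same conclusion without touching the resolvent, and is if anything more elementary. For the converse in (b) the paper simply cites Krengel--Lin; your weak-compactness argument (uniform bound on $(\lambda+A)^{-1}x$ via integration by parts, weak limit point, weak closedness of the graph) is exactly the standard proof behind that citation, so nothing is lost. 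For (c) both you and the paper start by upgrading the pointwise rate to $\norm{\Ce_t(A)}\to 0$ via uniform boundedness, but the paper then quotes Lin's uniform ergodic theorem to get $\ran(A)$ closed and combines this with $\ker(A)=\{0\}$ and $\cls{\ran}(A)=X$; you instead build the inverse directly from the second primitive $B=\frac1{t_0}\int_0^{t_0}(t_0-u)T(u)\,\ud{u}$, the identity $AB=\Id-\Ce_{t_0}(A)$ and a Neumann series. Your route buys a fully self-contained proof of exactly the invertibility statement needed (avoiding the general uniform ergodic theorem), at the cost of the commutation bookkeeping needed to check that $B(\Id-\Ce_{t_0}(A))^{-1}$ is a two-sided inverse --- which you handle correctly, since $BA=AB$ on $\dom(A)$ gives injectivity and $AB$ surjective gives surjectivity.
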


Part a) is due, essentially, to Butzer and Westphal \cite{BuWe71}.
Actually, it follows easily from the formula
\[
A(I + A)^{-1} \frac{1}{t}\int_0^t s \, \Ce_s(A) x\,\ud{s} =
(I + A)^{-1}x-(I + A)^{-1}\Ce_t(A)x\qquad (t > 0)
\]
and the boundedness of the operator $A(I +A)^{-1}$. This result
tells us that we cannot have  better convergence rates than
$\rmO(1/t)$. The first assertion in part b) is straightforward,
and the second one is obtained in \cite[Theorem 2.8]{KrLi84}, see
also \cite{GoRaSho78}. For the proof of c)  one first concludes
that $\lim_{t \to \infty} \Ce_t(A) = 0$ in operator norm by  the
principle of uniform boundedness. Hence $T$ is a so-called {\em
uniformly ergodic} semigroup, and Lin has proved in \cite{Lin74b}
that for such operators $\ran(A)$ must be closed. At the same
time, under the assumption in c), we have $\ker(A)=\{0\}$ and
$\cls{\ran}(A) = X.$

 Actually, modifying Lin's
arguments one can sharpen this result to show that $A$-smoothness
of a  vector $x\in X$ has no influence on the asymptotics of
$\Ce_t(A)x$, see Theorem \ref{int.t.rates1} below.

The theory in the discrete case as developed in \cite{GoHaTo11}
hinges on the notion of an {\em admissible function}, and one of
the major difficulties was to find a continuous analogue for it.
As it turned out, the well-studied notion of a {\em Bernstein
function} provides such an analogue. However, the continuous
theory is by no means a straightforward translation of the
discrete theory, due to the fact that the generator $-A$ of a
$C_0$-semigroup is usually unbounded. More severely and very much
opposed to the discrete case, the operators $g(A)$, where $g$ is a
Bernstein function, are usually unbounded as well. Dealing with
this problem required a more sophisticated use of functional
calculus theory and some new results about Bernstein functions,
probably of independent interest.

As a result, we can cover polynomial and logarithmic rates.
Employing the notion of a {\em special Bernstein function} and using ideas
from \cite{CoCuLi11}, the lower estimates for rates could be
improved with respect  to \cite{GoHaTo11} to the extent that they now apply
under the sole condition that
$0$ is an accumulation point of the spectrum of the generator.
Moreover, in addition to what was considered in
\cite{GoHaTo11}, in this paper we {\em characterize} the functions that
arise as the rates of decay for Ces\`aro means in our setting
(Theorem \ref{rates.t.char}).

\medskip

Here is a synopsis of our main results:
Given a Bernstein function $g$  we establish
a uniform rate $r(t)$ of decay of $\Ce_t(A)x$ for $x \in \ran(g(A))$
(Theorem \ref{rates.t.rate-indiv}).
Then we characterize those rate functions $r$  that are associated
with a Bernstein function in this manner (Corollary
\ref{rates.c.rate-indivc} and Appendix \ref{AppendixB}).
Next, we show that the rate $r$ associated
with a Bernstein function $g$ can be read off from $g$ without
recurring on $g$'s representing measure, which is unknown in most cases
(Proposition \ref{rea.p.comp}). Then we prove that the (weak Abel) convergence
of a certain integral of the orbit $(T(s)x)_{s\ge 0}$ implies
that $x\in \ran(g(A))$, and hence that $\Ce_t(A)x$ has rate $r(t)$
(Theorem \ref{psr.c.final}). Finally we show that our results
are sharp under natural spectral assumptions,
see Theorem \ref{rea.t.main} and Remark \ref{remarkonrates}.
In Section \ref{s.exas} we illustrate our approach  with examples
for polynomial and logarithmic rates.

\subsection{Some Notations and Definitions}

For a closed linear operator $A$ on a complex Banach space $X$ we
denote by $\dom(A),$ $\ran(A)$, $\ker(A)$, and $\sigma(A)$ the
{\em domain}, the {\em range}, the {\em kernel}, and the {\em
spectrum} of $A$, respectively. The norm-closure of the range is
written as $\cls{\ran}(A)$. The space of bounded linear operators
on $X$ is denoted by $\Lin(X)$.  Let $\mathbb R_+$ stand for
$[0,\infty),$ and let $\eM(\R_+)$ denote the space of bounded
Radon measures on $\mathbb R_+$. We write $\C_+ :=\{ z\in \C \suchthat
\re z > 0\}$ for the open and $\cls{\C}_+ := \{ z\in \C\suchthat \re z \ge 0\}$
for the closed right halfplane.
For positive functions $r(t),
t\ge 0,$ and $s(t), t \ge 0,$ we write $r \sim s$ if there is $c
> 0$ such that $r(t)/c \le s(t) \le c r(t)$ for sufficiently large
$t \in \R_+$.

\section{Preliminaries}\label{prelim}

\subsection{Laplace Transforms}

A complex Radon measure $\mu$ on $\R_+$ is called {\em Laplace transformable}
if
\[ \int_{\mathbb R_+} e^{-st} \abs{\mu}(\ud{s}) <  \infty \quad
\text{for each $t > 0$}.
\]
Let us  write $\ue_z(s) := e^{-sz}$ for $z\in \C$ and $s \ge 0$. Then
$\mu$ is Laplace-transformable if $\ue_t \mu \in \eM(\R_+)$ for each $t> 0$.
The Laplace-transformable complex Radon measures form a Fr\'echet space.
The {\em Laplace transform} of a Laplace-transformable complex Radon
measure $\mu$ on $\R_+$ is
\[ (\Lap\mu)(z) = \fourier{\mu}(z) := \int_{\R_+} e^{-sz} \, \mu(\ud{s})
\qquad (\re z > 0).
\]
If $\mu$ is a bounded measure, then $\Lap\mu$ has an extension
to a  continuous function on $\cls{\C}_+$.
The space
\[ \Wip(\C_+) := \{ \Lap\mu \suchthat \mu \in \eM(\R_+)\}
\]
is a Banach algebra with respect to the pointwise multiplication
norm
\begin{equation}\label{mmm}
\norm{\Lap \mu}_{\Wip} := \norm{\mu}_{\eM(\R_+)} = \abs{\mu}(\R_+),
\end{equation}
and the Laplace transform
\[ \Lap : \eM(\R_+) \pfeil \Wip(\C_+)
\]
is an isometric isomorphism. Indeed, $\eM(\R_+)$ is a (unital) Banach algebra with multiplication given
by convolution  and with the norm defined by \eqref{mmm} (see \cite[p. 141-144]{HilPhi}), and the
Laplace transform is an injective algebra homomorphism
from $\eM(\R_+)$ to $\Wip(\C_+).$

More general, if $\mu, \nu$ are Laplace-transformable, then their convolution $\mu \ast \nu$
exists and is again Laplace-transformable. This follows from the identity
\[    \ue_t (\mu  \ast \nu) = (\ue_t\mu)\ast (\ue_t \nu)
\]
which is true for bounded measures, and can serve as a basis for
{\em defining} $\mu \ast \nu$ if $\mu$ or $\nu$ is not a bounded measure.
A simple computation then yields the identity
\[ \Lap(\mu\ast \nu) = (\Lap \mu) \, \cdot \, (\Lap \nu)
\]
for all Laplace-transformable Radon measures $\mu, \nu$ on $\R_+$.

\subsection{Functional Calculus}\label{s.hpfc}

Let $-A$ be the generator of a bounded $C_0$-semigroup
$(T(s))_{s\ge 0}$ on a Banach space $X$. Recall that at least heuristically `$T(s)=e^{-sA}$'.
Keeping this in mind, observe that
 the assignment
\[ g = \fourier{\mu} = \int_{\R_+} e^{-sz}\, \mu(\ud{s}) \quad
\mapsto \quad g(A) := \int_{\R_+} T(s)\, \mu(\ud{s})
\]
(with a strong integral in the definition of $g(A)$)
is a continuous algebra homomorphism of $\Wip(\C_+)$ into $\Lin(X)$
satisfying
\begin{equation}\label{hpfc.e.bound}
 \norm{g(A)} \le (\sup_{s\ge 0} \norm{T(s)} ) \, \norm{g}_{\Wip}
\qquad (g\in \Wip(\C_+)).
\end{equation}
This homomorphism is called the {\em Hille-Phillips} (HP) functional calculus
for $A$, see \cite[Chapter XV]{HilPhi}.
It  has a canonical extension towards a larger
function class, yielding unbounded operators in general.
This extension is constructed via the so-called {\em regularization method}
as follows: if $f: \C_+ \to \C$ is holomorphic such that
there exists a function $e\in \Wip(\C_+)$ with
$ef \in \Wip(\C_+)$ and the operator $e(A)$ is injective, then
\[ f(A) := e(A)^{-1} \, (ef)(A)
\]
with its natural domain $\dom (f(A)):=\{x \in X  \suchthat
(ef)(A)x \in \ran(e(A)) \}$. In this case $f$ is called
{\em regularizable}, and $e$ is called a {\em regularizer} for $f$. It is
easily shown that the definition of $f(A)$ does not depend on the
chosen  regularizer $e$ and that $f(A)$ is a closed
(but possibly unbounded) operator on $X$. Moreover, the set of all
regularizable functions $f$ is an algebra (depending on $A$).
(See e.g. \cite[p. 4-5]{Haa2006} and \cite[p. 246-249]{deLau95}.)
The assignment
\[ f \tpfeil f(A)
\]
from this algebra into the set of all closed operators on $X$ is
called the {\em extended Hille--Phillips calculus} for $A$. There
are natural rules governing this calculus, see for example
\cite[Chapter 1]{Haa2006}, the most important of which is the {\em
product rule}: {\em if $f$ is regularizable and $g\in \Wip(\C_+)$,
then}
\begin{equation}\label{hpfc.e.prod}
 g(A) f(A) \subseteq f(A) g(A) = (fg)(A),
\end{equation}
where we take the natural domain for a product
of operators, and inclusion means inclusion of graphs, i.e., extension.
In particular, it follows that $(fg)(A) \in \Lin(X)$
if and only if $\ran(g(A)) \subseteq \dom(f(A))$.

While the explicit description  of the domain of $f(A)$ could be rather nontrivial,
one can recover $f(A)$ from its restriction to $\dom (A^n), n \in \N,$ as the following lemma shows.
\begin{lemma}\label{hpfc.l.core}
Let $-A$ be the generator of a bounded $C_0$-semigroup on a Banach space $X$,
and let  $f(A)$ be defined in the extended
HP-calculus for $A$. Then for each $n \in \mathbb N$ the space
\[
{\mathcal I}:= \{ x\in \dom(A^n)\cap \dom(f(A)) \suchthat f(A)x \in \dom(A^n)\}
\]
is a core for $f(A)$, that is the closure of the restriction of $f(A)$ to ${\mathcal I}$ is $f(A)$ itself.
\end{lemma}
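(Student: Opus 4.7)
My plan is to approximate an arbitrary $x\in\dom(f(A))$ by vectors of the form $R_\lambda^n x$, where $R_\lambda := \lambda(\lambda+A)^{-1}$ is the (rescaled) resolvent of $-A$ at $-\lambda$.

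First, I would observe that the function $g_\lambda(z) := \lambda^n/(\lambda+z)^n$ belongs to $\Wip(\C_+)$ for every $\lambda>0$, since it is the Laplace transform of a scalar multiple of $s^{n-1}e^{-\lambda s}\mathbf{1}_{\R_+}(s)\in L^1(\R_+)$. The Hille--Phillips calculus then identifies $g_\lambda(A)$ with $R_\lambda^n\in\Lin(X)$, and by \eqref{hpfc.e.bound} the family $\{R_\lambda^n:\lambda>0\}$ is uniformly bounded by $M^n$, where $M=\sup_{s\ge 0}\norm{T(s)}$. Moreover $R_\lambda x\to x$ as $\lambda\to\infty$ for every $x\in X$ by the standard approximation property of resolvents of bounded $C_0$-semigroups, and by uniform boundedness this iterates to $R_\lambda^n x\to x$.

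Next, for $x\in\dom(f(A))$ I invoke the product rule \eqref{hpfc.e.prod} with $g=g_\lambda\in\Wip(\C_+)$: this gives $g_\lambda(A)f(A)\subseteq f(A)g_\lambda(A)$, so $R_\lambda^n x = g_\lambda(A)x\in\dom(f(A))$ and
\[
f(A)R_\lambda^n x = R_\lambda^n f(A)x.
\]
Since $R_\lambda$ maps $X$ into $\dom(A)$, the operator $R_\lambda^n$ maps $X$ into $\dom(A^n)$; applied to $x$ and to $f(A)x$ separately, this yields $R_\lambda^n x\in\dom(A^n)$ and $f(A)R_\lambda^n x=R_\lambda^n f(A)x\in\dom(A^n)$. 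Hence $R_\lambda^n x\in{\mathcal I}$.

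Finally, letting $\lambda\to\infty$, the convergence $R_\lambda^n x\to x$ together with $f(A)R_\lambda^n x=R_\lambda^n f(A)x\to f(A)x$ shows that ${\mathcal I}\subseteq\dom(f(A))$ is dense with respect to the graph norm of $f(A)$, which is exactly the assertion that ${\mathcal I}$ is a core. There is no real obstacle here: the only point requiring care is to cite the product rule in the correct direction so as to ensure that $g_\lambda(A)x$ lies in $\dom(f(A))$ and that the operators commute on this vector; once this is in hand, the two $\dom(A^n)$-memberships are automatic from the factor $R_\lambda^n$.
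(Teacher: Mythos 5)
Your argument is correct and is essentially the paper's own proof: both approximate $x\in\dom(f(A))$ by $\lambda^n(\lambda+A)^{-n}x$, use the product rule \eqref{hpfc.e.prod} to commute this bounded HP-calculus operator past $f(A)$, and conclude by the convergence of the iterated resolvents together with the closedness of $f(A)$. No gaps.
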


\begin{proof}
Note that $r_t(z) := (t+z)^{-1} \in \Wip(\C_+)$ for each $t > 0$.  By
\eqref{hpfc.e.prod}, if $x \in \dom (f(A))$ and
$f(A)x = y$ then $f(A)[t^n(t+A)^{-n}]x = t^n (t+A)^{-n} y$.
This shows that $t^n (t+A)^{-n}x \in \dom(f(A)) \cap \dom(A^n)$, and
since $\dom(A)$ is dense we have $t^n (t+A)^{-n}x \to x$ as $t \to \infty$,
and the same holds for $y$. As the operator $f(A)$ is closed this completes the proof.
\end{proof}

The following spectral inclusion theorem
is well-known \cite[Theorem 16.3.5]{HilPhi}. For the convenience of the reader
we provide a (particularly simple) proof.

\begin{thm}\label{hpfc.t.spin}
Let $g\in \Wip(\C_+)$ and let $-A$ be the generator of a bounded
$C_0$-semigroup $(T(s))_{s\ge 0}$ on a Banach space $X$. Then
\[  \{ g(\lambda) \suchthat \lambda \in \sigma(A)\}
= g(\sigma(A)) \subseteq  \sigma(g(A)).
\]
\end{thm}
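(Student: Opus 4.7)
The plan is to fix $\lambda \in \sigma(A)$ and show $g(\lambda) \in \sigma(g(A))$ by splitting into cases according to whether $\lambda$ lies in the approximate point spectrum $\sigma_{\mathrm{ap}}(A)$ of $A$ or in its complement within $\sigma(A)$. Since $-A$ generates a bounded $C_0$-semigroup, $\sigma(A) \subseteq \cls{\C}_+$, so in either case $\re \lambda \ge 0$ and the function $s \mapsto e^{-s\lambda}$ is bounded on $\R_+$.

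\textbf{Approximate point spectrum case.} Choose $x_n \in \dom(A)$ with $\norm{x_n} = 1$ and $(A - \lambda) x_n \to 0$. Differentiating $u \mapsto e^{-(s-u)\lambda} T(u) x_n$ on $[0,s]$ and integrating in $u$ gives the identity
\[
T(s) x_n - e^{-s\lambda} x_n = -\int_0^s e^{-(s-u)\lambda} T(u)(A - \lambda) x_n \, \ud{u}.
\]
Setting $M := \sup_{s \ge 0} \norm{T(s)}$ and using $\re \lambda \ge 0$, this yields both $\norm{T(s) x_n - e^{-s\lambda} x_n} \le s M \norm{(A-\lambda) x_n}$ and the crude bound $\norm{T(s)x_n - e^{-s\lambda} x_n} \le M+1$. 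Writing $g = \Lap \mu$ with $\mu \in \eM(\R_+)$, the dominated convergence theorem applied to
\[
g(A)x_n - g(\lambda) x_n = \int_{\R_+}\bigl(T(s) x_n - e^{-s\lambda} x_n\bigr)\, \mu(\ud{s})
\]
(with dominating function $M+1$, integrable against the bounded measure $|\mu|$) forces $(g(A) - g(\lambda))x_n \to 0$, whence $g(\lambda) \in \sigma_{\mathrm{ap}}(g(A)) \subseteq \sigma(g(A))$.

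\textbf{Residual spectrum case.} If $\lambda \in \sigma(A) \setminus \sigma_{\mathrm{ap}}(A)$, then $A - \lambda$ is bounded below with closed range strictly contained in $X$, so by Hahn--Banach there exists $\phi \in X^* \setminus \{0\}$ annihilating $\ran(A-\lambda)$, equivalently $A'\phi = \lambda \phi$. Differentiating $s \mapsto \langle T(s)x, \phi\rangle$ for $x \in \dom(A)$ shows that this scalar function satisfies $f'(s) = -\lambda f(s)$ with $f(0) = \langle x,\phi\rangle$, so $\langle T(s)x,\phi\rangle = e^{-s\lambda}\langle x,\phi\rangle$; density of $\dom(A)$ in $X$ promotes this to $T(s)^*\phi = e^{-s\lambda}\phi$ for all $s \ge 0$. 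Integrating against $\mu$ then yields $g(A)^*\phi = g(\lambda)\phi$, so $g(\lambda) \in \sigma_p(g(A)^*) \subseteq \sigma(g(A))$.

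The main obstacle is the residual-spectrum case, which requires the transition to the adjoint and checking that one still gets an eigenvalue identity for the semigroup. The approximate-point-spectrum case is a routine integration plus dominated convergence argument, with the one subtle point being that $\re \lambda \ge 0$ is essential for both the pointwise $sM$-bound and the uniform $M+1$-bound used by dominated convergence.
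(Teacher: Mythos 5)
Your proof is correct, but it follows a genuinely different route from the paper's. You use the classical two-case decomposition of the spectrum: for $\lambda$ in the approximate point spectrum you transport approximate eigenvectors through the identity $T(s)x_n - e^{-s\lambda}x_n = -\int_0^s e^{-(s-u)\lambda}T(u)(A-\lambda)x_n\,\ud{u}$ and dominated convergence (and you correctly flag that $\re\lambda\ge 0$, guaranteed by boundedness of the semigroup, is what makes both the pointwise bound $sM\norm{(A-\lambda)x_n}$ and the uniform bound $M+1$ work); for the remaining $\lambda\in\sigma(A)$ you pass to an eigenfunctional $\phi$ of the adjoint and show $T(s)^*\phi = e^{-s\lambda}\phi$, hence $g(A)^*\phi = g(\lambda)\phi$. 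The paper instead stays entirely inside the Banach algebra $\Wip(\C_+)$: it writes $g(\lambda)-g(z)$ as a $\Wip$-norm limit of functions of the form $(\lambda - z)h_N(z)$ with $h_N\in\Wip(\C_+)$, and argues that invertibility of $g(\lambda)-g(A)$ would force invertibility of $(\lambda-A)h_N(A)$ for large $N$ and hence of $\lambda - A$, a contradiction. The paper's argument is shorter, avoids any case distinction, and is a pure functional-calculus argument using only the product rule \eqref{hpfc.e.prod} and stability of invertibility under small norm perturbations; yours is more elementary in spirit and yields a slightly sharper statement as a by-product, namely that $g$ maps the approximate point spectrum of $A$ into that of $g(A)$ and the residual part into the point spectrum of the adjoint. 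Both are complete and valid.
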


\begin{proof}
Let $\lambda \in \sigma(A)$ and $g=\Lap \mu$ for some $\mu \in \eM(\R_+)$.
Note that  $\re \lambda \ge 0$, since the
semigroup is bounded. Hence we have
\[ g(\lambda) - g(z) = \lim_{N \to \infty} \int_0^N
(e^{-s\lambda} - e^{-sz})\, \mu(\ud{s})
\]
the convergence being in the norm of $\Wip(\C_+)$.  Furthermore,
\begin{align*}
  \int_0^N
(e^{-s\lambda} - e^{-sz})\, \mu(\ud{s}) & = - (\lambda - z)
\int_0^Ne^{-s\lambda}\int_0^s e^{t(\lambda - z)} \, \ud{t} \,
\mu(\ud{s})
\\ & = - (\lambda - z) \int_0^{N}
\left( e^{t\lambda} \int_{t}^N e^{-s\lambda} \, \mu(\ud{s})
\right) e^{-tz}\, \ud{t}
\\ & = (\lambda -z) h_N(z)
\end{align*}
for some  $h_N\in \Wip(\C_+)$. By \eqref{hpfc.e.prod}
\[ [(\lambda - z)h_N](A) = (\lambda - A) h_N(A),
\]
hence if $g(\lambda) - g(A)$ is
invertible then for big enough  $N  > 0$ the operator $(\lambda - A) h_N(A)$
is invertible as well.  Since $h_N(A)(\lambda - A) \subseteq (\lambda - A)
h_N(A)$ by \eqref{hpfc.e.prod} again, we conclude that $\lambda - A$ is
invertible.
\end{proof}

Suppose that $f = \Lap \mu$ for some Laplace-transformable
but not necessarily boun\-ded measure $\mu$. It is then natural to
examine the operator
\[ x \tpfeil \int_0^\infty T(s)x\, \mu(\ud{s})
\]
defined on the set of $x\in X$ where this integral exists
in whatever generalized sense. The next theorem roughly states that in
case of ``weak Abel summability'' of the integral, this operator
is in coherence with the extended HP-calculus.

\begin{thm}\label{hpfc.t.app-dom}
Let $\mu$ be a Laplace-transformable complex Radon measure on
$\R_+$, and let $f := \Lap\mu$. Suppose that $e \in \Wip(\C_+)$ is
such that $ef \in \Wip(\C_+)$ as well. Let $-A$ be the generator
of a bounded $C_0$-semigroup $(T(s))_{s\ge 0}$ on a Banach space $X$, and  let
$x,y\in X$ be such that
\begin{equation}\label{weakab}
 \lim_{\alpha \searrow 0} \int_0^\infty e^{-\alpha s} T(s)x\, \mu(\ud{s})
= y \qquad  \text{weakly}.
\end{equation}
Then $(ef)(A)x = e(A)y$.
\end{thm}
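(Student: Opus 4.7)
The plan is to compare the bounded operator $(ef)(A)$ with the one-parameter family of bounded operators $(eg_\alpha)(A)$, where $g_\alpha(z) := f(z+\alpha)$ for $\alpha > 0$. Since $\mu$ is Laplace-transformable we have $\ue_\alpha \mu \in \eM(\R_+)$, whence $g_\alpha = \Lap(\ue_\alpha \mu) \in \Wip(\C_+)$, and the hypothesis \eqref{weakab} simply says $g_\alpha(A) x \to y$ weakly as $\alpha \searrow 0$. Since $e(A) \in \Lin(X)$ is bounded (and hence weakly sequentially continuous), the product rule \eqref{hpfc.e.prod} yields at once
\[
(e g_\alpha)(A)\, x \;=\; e(A)\, g_\alpha(A)\, x \;\xrightarrow{\text{weakly}}\; e(A)\, y \qquad (\alpha \searrow 0).
\]

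The main step is then to prove the \emph{norm} convergence $(eg_\alpha)(A) x \to (ef)(A) x$, from which $(ef)(A) x = e(A) y$ will follow by uniqueness of weak limits. For this I would use the elementary pointwise identity
\[
ef(z) - eg_\alpha(z) \;=\; \bigl(e(z+\alpha) - e(z)\bigr)\, g_\alpha(z) \;+\; \bigl(ef(z) - ef(z+\alpha)\bigr),
\]
which is routinely verified by expansion and, crucially, has all four summands in $\Wip(\C_+)$ thanks to the hypothesis $ef \in \Wip(\C_+)$. The HP calculus then translates this into the operator equation
\[
(ef)(A) x - (eg_\alpha)(A) x \;=\; \bigl[(e(\cdot+\alpha))(A) - e(A)\bigr]\, g_\alpha(A) x \;+\; \bigl[(ef)(A) - (ef(\cdot+\alpha))(A)\bigr]\, x.
\]

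Both summands then tend to $0$ in norm. Setting $\eta := \Lap^{-1} e$ and $\nu := \Lap^{-1}(ef)$ (which lie in $\eM(\R_+)$), one has $e(\cdot+\alpha) = \Lap(\ue_\alpha \eta)$ and $(ef)(\cdot+\alpha) = \Lap(\ue_\alpha \nu)$, so the contraction estimate \eqref{hpfc.e.bound} controls the norms of the two bracketed operators by
\[
\sup_{s \ge 0}\|T(s)\| \cdot \int_{\R_+}(1 - e^{-\alpha s})\,|\eta|(\ud s) \quad\text{and}\quad \sup_{s \ge 0}\|T(s)\| \cdot \int_{\R_+}(1 - e^{-\alpha s})\,|\nu|(\ud s),
\]
each of which tends to $0$ by dominated convergence, since $\eta$ and $\nu$ are bounded. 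The factor $g_\alpha(A) x$ in the first summand stays bounded in norm because it converges weakly (uniform boundedness principle), so the full product still vanishes. The main (and only) nonroutine step is spotting the measure-algebraic decomposition above and recognising that it lives entirely inside the bounded HP calculus; once that is in place, the norm estimates, the uniform boundedness argument, and the product rule conclude the proof.
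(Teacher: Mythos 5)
Your argument is correct and is essentially the paper's own proof in lightly rearranged form: both hinge on writing $g_\alpha = \Lap(\ue_\alpha\mu)$, on the convergence $\ue_\alpha\rho \to \rho$ in $\eM(\R_+)$ applied to the bounded measures representing $e$ and $ef$, and on combining operator-norm convergence with the weak convergence of $g_\alpha(A)x$. The paper simply works with the single product $e(\cdot+\alpha)\,g_\alpha = (ef)(\cdot+\alpha)$ rather than your three-term decomposition, but the ingredients and estimates are identical.
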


\begin{proof}
Since $\mu$ is Laplace transformable,  $\ue_\alpha \mu \in \eM(\R_+)$ for each $\alpha > 0$, and hence
\[ \int_0^\infty e^{-\alpha s} T(s) \, \mu(\ud{s}) = \Lap( \ue_\alpha \mu)(A).
\]
Let $\nu \in \eM(\R_+)$ such that $\Lap \nu = e$. Then, since
$ef \in \Wip(\C_+)$, $\nu \ast \mu \in \eM(\R_+)$. Moreover,
\[
\ue_\alpha\nu \to \nu\quad  \text{ and}\quad  \ue_\alpha(\nu \ast\mu) \to \nu \ast\mu
\qquad \text{as $\alpha \searrow 0$}
\]
in the norm of $\eM(\R_+)$. Consequently,
$\Lap(\ue_\alpha \nu)(A) \to e(A)$ in operator norm and hence
\[ \Lap(\ue_\alpha \nu)(A) \Lap( \ue_\alpha \mu)(A)x
\to e(A)y
\]
weakly. On the other hand,
\[
\Lap(\ue_\alpha \nu) \Lap(\ue_\alpha \mu)
=
\Lap( (\ue_\alpha \nu)\ast (\ue_\alpha \mu))
=
\Lap(\ue_{\alpha}(\nu \ast \mu)) \to \Lap(\nu \ast \mu) = ef
\]
in the norm of $\Wip(\C_+)$. Inserting $A$  concludes the proof.
\end{proof}

\begin{cor}\label{hpfc.c.app-dom}
Let $f = \Lap \mu$ and let $A, (T(s))_{s\ge 0}, x,y$
as in Theorem \ref{hpfc.t.app-dom}. Suppose that
$g$ is a (regularizable) holomorphic function on $\C_+$ so that $g(A)$ is defined
by the extended HP-calculus and $gf \in \Wip(\C_+)$. Then
$y \in \dom(g(A))$ and $(gf)(A)x = g(A)y$.
\end{cor}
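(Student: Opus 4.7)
The plan is to reduce Corollary \ref{hpfc.c.app-dom} to the already established Theorem \ref{hpfc.t.app-dom} by exploiting a regularizer for $g$. Since $g$ is assumed to be regularizable in the extended HP-calculus for $A$, there exists $e \in \Wip(\C_+)$ such that $eg \in \Wip(\C_+)$ and $e(A)$ is injective; in particular $g(A) = e(A)^{-1}(eg)(A)$ with natural domain $\{ z\in X \suchthat (eg)(A)z \in \ran(e(A))\}$.

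Next I would apply Theorem \ref{hpfc.t.app-dom} with the function $eg$ in the role of the regularizer ``$e$'' of that theorem. Since $\Wip(\C_+)$ is a Banach algebra and both $e$ and $gf$ belong to $\Wip(\C_+)$ (the latter by hypothesis), the product $(eg)\cdot f = e\cdot(gf)$ again lies in $\Wip(\C_+)$, so the hypothesis of Theorem \ref{hpfc.t.app-dom} is satisfied. The weak Abel convergence \eqref{weakab} is given, so the theorem yields
\[
 (egf)(A)\,x \;=\; (eg)(A)\,y.
\]

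Now I would split the left-hand side via the product rule \eqref{hpfc.e.prod}: writing $egf = e\cdot(gf)$ with both factors in $\Wip(\C_+)$, one obtains $(egf)(A) = e(A)(gf)(A)$ as bounded operators. Hence
\[
 (eg)(A)\,y \;=\; e(A)\,(gf)(A)\,x \;\in\; \ran(e(A)).
\]
By the definition of $\dom(g(A))$ this forces $y \in \dom(g(A))$, and applying $e(A)^{-1}$ (well-defined by injectivity of $e(A)$) gives $g(A)y = (gf)(A)x$, which is the claim.

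There is no serious obstacle here beyond careful bookkeeping: one must verify that the function acting as regularizer in the application of Theorem \ref{hpfc.t.app-dom} really lies in $\Wip(\C_+)$, and then rearrange the factors in $(egf)(A)$ correctly via the product rule. The conceptual step is just the observation that the regularizer $e$ of $g$ can be absorbed into the ``$e$'' of Theorem \ref{hpfc.t.app-dom}, converting the extended-calculus identity for $g(A)$ into one involving only functions in $\Wip(\C_+)$.
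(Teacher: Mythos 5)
Your argument is correct and follows essentially the same route as the paper: apply Theorem \ref{hpfc.t.app-dom} with $eg$ (for $e$ a regularizer of $g$) in the role of that theorem's regularizer, factor $(egf)(A)=e(A)(gf)(A)$ by the product rule, and use injectivity of $e(A)$ together with the definition of $\dom(g(A))$ to conclude. Your write-up is in fact slightly more explicit than the paper's in verifying that $(eg)f=e\,(gf)\in\Wip(\C_+)$ and in noting that $(eg)(A)y\in\ran(e(A))$ is what puts $y$ in $\dom(g(A))$.
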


\begin{proof}
Take any $e\in \Wip(\C_+)$ such that $eg \in \Wip(\C_+)$.
By Theorem \ref{hpfc.t.app-dom},
\[ e(A)(gf)(A)x = (egf)(A)x = (eg)(A)y.
\]
If $e$ is a regularizer for $g$, then $e(A)$ is injective, and
we can conclude that $(gf)(A)x = e(A)^{-1}(eg)(A)y$. Hence
$y \in \dom(g(A))$ and  $g(A)y = (gf)(A)x$.
\end{proof}

We note that the weak Abel summability  \eqref{weakab} is
weaker than improper weak summability
\begin{equation}\label{weakod}
\lim_{r \to\infty} \int_{0}^{r} T(s)x \, \mu(\ud{s})=y \qquad  \text{weakly}.
\end{equation}
Indeed, this follows by applying elements from the dual
space and employing the regularity of scalar Abel summability
\cite[p.181]{Wi41}.

Actually,  \eqref{weakab} is in general even strictly weaker than \eqref{weakod}.
As an example consider the case $f(z) = 1/z$, i.e., $\mu$
is ordinary Lebesgue measure. Then  \eqref{weakab} just means that
\begin{equation}\label{weakabinv}
  \lim_{\alpha \searrow 0} \int_0^\infty e^{-\alpha s} T(s)x\, \ud{s}
= \lim_{\alpha \searrow 0} (\alpha + A)^{-1} x  = y \qquad  \text{weakly}.
\end{equation}
Taking $g(z) = z$ in Corollary \ref{hpfc.c.app-dom} we obtain $x =
Ay$; conversely, it is easily seen that $x\in \ran(A)$ implies
\eqref{weakabinv}, cf.~\cite[Theorem~2.1]{KrLi84}. However, for
$x=Az \in \ran(A)$ one has
\[
\int_{0}^{r} T(s)x \, \mu(\ud{s}) = z - T(r) z,
\]
and hence \eqref{weakod} holds if and only if
$\lim_{r \to \infty} T(r)z = y-z$ weakly. Hence
every multiplication semigroup $T(s)=e^{ia s} \Id$ for $a \in \R\ohne \{0\}$
is an example for when \eqref{weakab} and \eqref{weakod} differ.
(See Remark \ref{hirsch} below for more about this topic.)

\subsection*{Bernstein Functions}

We now set up a functional-analytic background needed for our studies of rates.
As a general reference for most of material in the next subsections
we use the recent book \cite{SchilSonVon2010}.

The notions of completely monotone and Bernstein functions
are essential for our approach.
A function $f\in
\Ce^\infty(0,\infty)$ is called {\em completely
monotone} if
\[
f(t)\geq 0\quad \mbox{and} \quad  (-1)^n\frac{\ud^n f(t)}{\ud{t}^n}\geq 0
\qquad\text{for all $n \in \N$ and $ t > 0$.}
\]
By Bernstein's theorem \cite[Theorem 1.4]{SchilSonVon2010},
 a  function $f\in
\Ce^\infty(0,\infty)$ is completely monotone if and only if there
exists a (necessarily unique) Laplace-transformable positive Radon
measure $\mu$ on $\R_+$ such that $f(t) = (\Lap \mu)(t)$ for all
$t > 0$.

\smallskip

A function $g\in \Ce^\infty(0, \infty)$ is called a {\em Bernstein
function} if
\[
g(t)\geq 0\quad \mbox{and}\quad (-1)^n\frac{\ud^n g(t)}{\ud{t}^n}\leq 0
\qquad \text{for all $n \in \mathbb N$ and $ t > 0$}.
\]
By \cite[Theorem 3.2]{SchilSonVon2010},
a function $g$ is a Bernstein function  if and only if
there exist  constants $a, b\geq 0$ and a positive Radon measure
$\mu$ on $(0,\infty)$ satisfying
\begin{equation*}
\int_{0+}^\infty\frac{s}{1+s}\,\mu(\ud{s})<\infty \label{mu}
\end{equation*}
and such that
\begin{equation}\label{hpfc.e.bf}
g(z)=a+bz+\int_{0+}^\infty (1-e^{-sz})\mu(\ud{s})\qquad (z>0).
\end{equation}
The triple $(a, b, \mu)$ is uniquely determined by the
corresponding Bernstein function $g$. Note that from the
definition of $g$ it follows that $g$ extends analytically to
$\C_+$ and, moreover, $g \in \Ce(\cls{\C}_+)$ (see
\cite[Proposition 3.5]{SchilSonVon2010} and cf. Lemma
\ref{hpfc.l.bf-fc} below). Since such an analytic extension
is unique by standard complex function theory, there is no
harm in {\it identifying}
Bernstein functions with their extensions to $\C_+$,
and we shall henceforth do so.

Clearly, a Bernstein function $g \sim (a,b, \mu)$
is positive, increasing,  and satisfies
\[
a=g(0+)\quad \text{and} \quad b=\lim_{t \to \infty}\frac{g(t)}{t}.
\]
The Bernstein function $g$ is bounded if and only if $b=0$ and
$\mu(0,\infty)< \infty$  \cite[Corollary 3.7]{SchilSonVon2010}.
If $f$ is completely monotone and $g$ is a Bernstein function, then
$f \nach g$ is completely monotone \cite[Theorem 3.6]{SchilSonVon2010}.
In particular, if $0 \not= g$ is a Bernstein
function then
\[ \frac{1}{g} = \frac{1}{z} \nach g
\]
is completely monotone, hence by Bernstein's theorem there is a
positive Laplace transformable Radon measure $\nu$ with  $1/g =
\Lap\nu$. A completely monotone function $f$ is called a {\em
potential} if it is of the form $f =
1/g$ for some Bernstein function $g\not= 0$
\cite[Definition 5.17]{SchilSonVon2010}. (An analogous notion
was introduced and studied in \cite{ClPr90} where the representing
measure was called {\it completely positive}.) If $f$ is a
potential, then $f$ is decreasing  with
\[ \lim_{t \to \infty} f(t) = 0 \quad \text{if $g$ is unbounded},  \quad
\lim_{t\to \infty} tf(t) = \frac{1}{b}, \quad \lim_{t\searrow 0}
f(t)= \frac{1}{a},
\]
where we write $1/ \infty := 0$. In particular, we have
\[ f \in \Wip(\C_+) \quad \iff\quad f(0+) < \infty \quad \iff\quad
g(0+) > 0.
\]
It is not always easy to identify potentials. One way is by virtue
of {\em Hirsch's theorem} \cite{Hir74,Hir75}
saying that $f$ is a potential if for every $t>0$ the sequence
\[
\alpha_n:=(-1)^n\frac{f^{(n)}(t)}{n!}\qquad (n\ge 0)
\]
is log-convex, i.e., satisfies
\[
\alpha_n^2 \le \alpha_{n-1} \alpha_{n+1} \qquad(n\ge 1).
\]
Alternatively, a completely monotone function $f$ is a potential
if
\begin{equation}
 f(z) = c + \int_0^\infty e^{-sz} v(s)\, \ud{s} \qquad (\re z >
 0),
\end{equation}
where $c \ge 0$ and $v:(0,\infty) \to (0,\infty)$ is a decreasing
and log-convex function \cite[Theorem 10.23, Corollary
10.24]{SchilSonVon2010}. In \cite{Ka28}, fundamental for our paper
\cite{GoHaTo11}, the log-convexity of a sequence is shown to be
crucial for the study of  inverses of functions analytic on the
unit  disc. Kaluza's results in \cite{Ka28} parallel those of
Hirsch's, and in fact can be used to deduce Hirsch's theorem
mentioned above.

\medskip

We shall now show that Bernstein functions are always part of the
extended HP-functional calculus. For a related statement see
\cite[Theorem 1.6]{ClPr90}.

\begin{lemma}\label{hpfc.l.bf-fc}
Every Bernstein function $g$ can be written in the form
\[ g(z) = g_1(z) + z\, g_2(z), \qquad z >0,
\]
where $g_1, g_2 \in \Wip(\C_+)$.
\end{lemma}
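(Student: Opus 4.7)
The plan is to start from the L\'evy--Khintchine representation
\[
g(z) = a + bz + \int_{0+}^\infty (1 - e^{-sz})\, \mu(\ud{s})
\]
and to split the integral at $s=1$, treating the ``large $s$'' and ``small $s$'' parts by different tricks. The constant $a$ is trivially in $\Wip(\C_+)$ (as $\Lap(a\delta_0)$), and the term $bz$ is already in the required form $z\cdot b$ with the constant $b$ belonging to $\Wip(\C_+)$; all the work concerns the integral term.

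For the range $s > 1$, the integrability condition $\int \frac{s}{1+s}\mu(\ud{s}) < \infty$ gives $\mu((1,\infty)) < \infty$, since $\frac{s}{1+s}\ge \tfrac12$ for $s\ge 1$. Hence the restriction $\mu|_{(1,\infty)}$ is a bounded measure, and
\[
\int_1^\infty (1-e^{-sz})\,\mu(\ud{s}) \;=\; \mu((1,\infty)) - \int_1^\infty e^{-sz}\,\mu(\ud{s})
\]
is the Laplace transform of the bounded signed measure $\mu((1,\infty))\delta_0 - \mu|_{(1,\infty)}$, hence lies in $\Wip(\C_+)$. This contribution will be absorbed into $g_1$.

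For the range $s \le 1$, I would apply the identity $1 - e^{-sz} = z\int_0^s e^{-tz}\,\ud{t}$ and Fubini to get
\[
\int_{0+}^1 (1-e^{-sz})\,\mu(\ud{s}) \;=\; z\int_0^1 e^{-tz}\,\mu((t,1])\,\ud{t}.
\]
The main (minor) point to check here is that the measure $\mu((t,1])\,\ud{t}$ on $(0,1)$ is bounded; by Fubini its total mass equals $\int_{0+}^1 s\,\mu(\ud{s})$, which is finite because $\frac{s}{1+s}\ge \tfrac{s}{2}$ for $s\in(0,1]$ and $\int \frac{s}{1+s}\mu(\ud{s})<\infty$. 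Thus the inner integral $\int_0^1 e^{-tz}\mu((t,1])\,\ud{t}$ belongs to $\Wip(\C_+)$, and this contribution will be absorbed (together with the $bz$ term) into $z\cdot g_2(z)$.

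Collecting the pieces, I set
\[
g_1(z) := a + \mu((1,\infty)) - \int_1^\infty e^{-sz}\,\mu(\ud{s}), \qquad
g_2(z) := b + \int_0^1 e^{-tz}\,\mu((t,1])\,\ud{t},
\]
both of which lie in $\Wip(\C_+)$ by the discussion above, and then $g(z)=g_1(z)+z\,g_2(z)$ by construction. The only real obstacle is the bookkeeping in the Fubini step and the verification of finiteness of $\int_0^1 s\,\mu(\ud{s})$; everything else is essentially rearrangement.
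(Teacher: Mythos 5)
Your proof is correct and follows essentially the same route as the paper's: the key step in both is the identity $1-e^{-sz}=z\int_0^s e^{-tz}\,\ud{t}$ combined with Fubini's theorem, which converts the small-$s$ part of the representing measure into $z$ times the Laplace transform of a bounded measure, while the integrability condition $\int_{0+}^\infty \frac{s}{1+s}\,\mu(\ud{s})<\infty$ supplies both the finiteness of $\int_{0+}^1 s\,\mu(\ud{s})$ and of $\mu((1,\infty))$. The only difference is bookkeeping: you isolate the small-$s$ contribution by cutting the integral at $s=1$, whereas the paper achieves the same effect globally by writing $\mu(\ud{s})=\frac{s+1}{s}\,\nu(\ud{s})$ with $\nu(\ud{s})=\frac{s}{s+1}\,\mu(\ud{s})\in\eM(\R_+)$.
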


\begin{proof}
Suppose that $g \sim(a,b,\mu)$ as in \eqref{hpfc.e.bf}. We  extend
naturally $\mu$ to $\mathbb R_+$ by setting $\mu(\{0\})=0$ and let
\[ \nu(\ud{s})  := \frac{s}{s+1} \mu(\ud{s}) \in \eM(\R_+).
\]
Then we write
\[ (1 - e^{-sz})\frac{s+1}{s} =
1 - e^{-sz} + z \frac{1}{s} \int_0^s e^{-rz}\, \ud{r}.
\]
Integrating this against $\nu$ with respect to the variable $s >
0$ and using Fubini's theorem, we obtain
\begin{align*}
 g(z) & - a - bz   = \int_{0+}^\infty (1 - e^{-sz})\, \mu(\ud{s})
= \int_{0+}^\infty (1 - e^{-sz}) \frac{s+1}{s}\, \nu(\ud{s})
\\ & = c - (\Lap\nu)(z) + z \int_0^\infty \frac{1}{s}\int_0^s
 e^{-rz}\, \ud{r}\, \nu(\ud{s})
\\ &
= c - (\Lap\nu)(z) + z \int_0^\infty \left[ \int_{r}^\infty
\frac{\nu(\ud{s})}{s}\right] \, e^{-rz}\, \ud{r}
\\ & = c - (\Lap\nu)(z) + z (\Lap\gamma)(z)
\end{align*}
with
\[ c = \nu(0, \infty) \ge 0 \quad\text{and}\quad
\gamma  (\ud{r}) = \left[\int_{r}^\infty \frac{\nu(\ud{s})}{s}\right] \,
\ud{r}.
\]
Note that $\gamma$ is a positive bounded measure of total mass
$\gamma(\R_+) = c$.
\end{proof}

As a consequence we obtain that every Bernstein function is
regularizable by any of the functions  $e_{\lambda}(z)=(\lambda +z)^{-1}$, $\re
\lambda
> 0$ (corresponding to the resolvents $(\lambda +A)^{-1}$).

\begin{cor}\label{hpfc.c.bf-fc}
Let $-A$ generate a bounded $C_0$-semigroup $(T(s))_{s\ge 0}$ on a
Banach space $X$, and let $g \sim (a,b,\mu)$ be a Bernstein
function. Then $g(A)$ is defined in the extended HP-functional
calculus. Moreover, $\dom(A) \subseteq \dom(g(A))$ and
\begin{equation}\label{phillips}
g(A)x = ax + bAx + \int_{0+}^\infty (I - T(s))x \, \mu(\ud{s})
\end{equation}
for each $x\in \dom(A),$ and $\dom(A)$ is a core for $g(A)$. If
$Ax = 0$, then $g(A)x = ax$, and if $a
> 0,$ then $\ran(g(A)) =X$ and $g(A)$ is invertible.
\end{cor}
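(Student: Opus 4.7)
The plan is to reduce everything to the decomposition $g = g_1 + z g_2$ from Lemma \ref{hpfc.l.bf-fc} with $g_1, g_2 \in \Wip(\C_+)$, and to regularize $g$ by $e_\lambda(z) := (\lambda+z)^{-1}$ for some fixed $\lambda > 0$. Since $z e_\lambda = 1 - \lambda e_\lambda \in \Wip(\C_+)$, one has $e_\lambda g = e_\lambda g_1 + (1 - \lambda e_\lambda) g_2 \in \Wip(\C_+)$; together with the injectivity of $e_\lambda(A) = (\lambda+A)^{-1}$ this shows that $e_\lambda$ regularizes $g$, so that $g(A)$ is a well-defined closed operator in the extended HP-calculus.

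For $x \in \dom(A)$, a standard estimate using $\norm{(I-T(s))x} \le \min(sM\norm{Ax}, (1+M)\norm{x})$ (with $M := \sup_{s} \norm{T(s)}$) together with $\int \frac{s}{1+s}\mu(\ud s) < \infty$ shows that the integral in \eqref{phillips} converges absolutely. To identify it with $g(A)x$, I would apply the product rule \eqref{hpfc.e.prod} to the decomposition $g = g_1 + z g_2$, so that on $\dom(A)$ one obtains $g(A)x = g_1(A)x + g_2(A)Ax$; then, substituting the explicit expressions for $g_1$ and $g_2$ from the proof of Lemma \ref{hpfc.l.bf-fc} and using Fubini to convert $\int T(r)Ax\,\gamma(\ud r)$ into $\int (I-T(s))x\,\frac{\mu(\ud s)}{1+s}$, the two contributions recombine into the single integral $\int (I-T(s))x\,\mu(\ud s)$. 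This simultaneously yields $\dom(A) \subseteq \dom(g(A))$ and the Phillips formula.

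The remaining assertions follow quickly. The core statement is a direct application of Lemma \ref{hpfc.l.core} with $n=1$: the set $\mathcal I$ from that lemma lies inside $\dom(A) \subseteq \dom(g(A))$, and any superset of a core (inside $\dom(g(A))$) is again a core. For $x \in \ker(A) \subseteq \fix(T)$ one has $T(s)x = x$ for all $s$, so the Phillips formula reduces to $g(A)x = ax$. Finally, if $a > 0$ then $\re g(z) \ge a$ on $\cls\C_+$, so $1/g$ is a bounded completely monotone function with $(1/g)(0+) = 1/a$; by Bernstein's theorem its representing measure is bounded, hence $1/g \in \Wip(\C_+)$. The product rule \eqref{hpfc.e.prod} applied to $g$ and $1/g$ then gives $g(A)(1/g)(A) = I$ and $(1/g)(A)g(A) \subseteq I$, proving $g(A)$ bijective with $\ran(g(A)) = X$ and $g(A)^{-1} = (1/g)(A)$.

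The main technical hurdle is the middle paragraph: reconciling the abstract functional-calculus expression $g_1(A)x + g_2(A)Ax$ with the concrete Phillips integral. Beyond the bookkeeping in the HP-calculus, this requires interchanging $A$ with the integral defining $g_2(A)x$ and a Fubini argument to collapse two integrals (one against the measure $\nu = \frac{s}{1+s}\mu$, one against $\gamma$) into a single integral against $\mu$; this is the only point where the explicit form of the decomposition supplied by Lemma \ref{hpfc.l.bf-fc} enters in a non-formal way.
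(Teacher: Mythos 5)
Your proposal is correct and follows essentially the same route as the paper: well-definedness via the decomposition of Lemma \ref{hpfc.l.bf-fc} with the resolvent as regularizer, the Phillips formula by inserting $A$ into the explicit expressions for $g_1$ and $g_2$ and reversing the lemma's Fubini computation, the core statement from Lemma \ref{hpfc.l.core} with $n=1$, and invertibility for $a>0$ from $1/g\in\Wip(\C_+)$ together with the product rule.
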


\begin{proof}
That $g(A)$ is well defined follows immediately from Lemma
\ref{hpfc.l.bf-fc}. To prove the formula for $x\in \dom(A)$,
we insert $A$ in  the representation of $g$ derived in the proof of Lemma
\ref{hpfc.l.bf-fc} (using the definition of $g(A)$ via regularization) and obtain
\[ g(A)x -ax - bAx = cx - \int_0^\infty T(s)x\, \nu(\ud{s}) +
\int_0^\infty T(s)Ax\, \gamma (\ud{s}).
\]
Then we reverse the computation in the proof of Lemma \ref{hpfc.l.bf-fc}
using  the formula
\[ x - T(s)x =  \int_0^s T(r)Ax\, \ud{r} \qquad (x\in \dom(A))
\]
to arrive at \eqref{phillips}.
If $a > 0$ then $f = 1/g \in \Wip(\C_+)$,
hence $fg= 1$ and $f(A)g(A) \subseteq g(A)f(A) = \Id$. The
remaining statement follows from Lemma \ref{hpfc.l.core} with
$n=1.$
\end{proof}

\begin{remark}
The formula \eqref{phillips}
has been first obtained by Phillips in
\cite{Phil52}. In his approach,  the operator that we now denote by
$-g(A)$ is defined
as the generator of a certain semigroup {\em subordinate} in the sense
of Bochner to the semigroup whose generator is $-A$.
Curiously enough, Phillips does not use the notation ``$g(A)$'',
not even informally.  Balakrishnan \cite{Bala59} gave the
first definition of a proper unbounded functional calculus that
extends the Hille--Phillips calculus. His construction
covers subordinate semigroups, as he explains in
\cite[Section 5]{Bala59}, but  the formula
\eqref{phillips} is not explicitly treated.
Schilling \cite{Schil98}, probably
unaware of Balakrishnan's paper, gives an alternative description of
$g(A)$ for $g$ a {\em complete} Bernstein function, and uses it
to prove functional calculus properties such as the product rule
and  a composition rule, see also \cite[Chapter 12]{SchilSonVon2010}.
The general method of regularization that we use to extend the Hille--Phillips
calculus, was first described in full generality by  deLaubenfels in
\cite{deLau95} as ``Construction Two'', but is modelled
 on earlier work of McIntosh and Bade, see \cite[Sections 2.8 and 4.6]{Haa2006}.
Schilling cites deLaubenfels' paper
in the bibliography, but does not relate his functional calculus
to deLaubenfels' approach. This has been achieved now with our
Lemma \ref{hpfc.l.bf-fc} and  Corollary \ref{hpfc.c.bf-fc}.
\end{remark}

\subsection*{Complete and Special Bernstein Functions}

A Bernstein function is called a {\em complete Bernstein function}
if its representing measure has a completely monotone density
with respect to Lebesgue measure, see \cite[Definition 6.1]{SchilSonVon2010}.
A Bernstein function $g\not=0$ is called {\em special} if
$z/g(z)$ is again a Bernstein function.
By \cite[Proposition 7.1]{SchilSonVon2010},
if $g \not= 0$ is a complete Bernstein function, then so is $z/g(z)$.
Hence every non-zero complete Bernstein function is special.

By \cite[Theorem 10.3, Remark 10.4, (ii)]{SchilSonVon2010}, a Bernstein function $g \not=0$
is special if and only if  the associated potential $f = 1/g$ has
the Laplace transform representation
\begin{equation}\label{hpfc.e.sbf-rep}
 f(z) = c + \int_0^\infty e^{-sz} v(s)\, \ud{s} \qquad (\re z >
 0),
\end{equation}
where $c\ge 0$ and $v: (0,\infty)\to  (0,\infty)$ is decreasing.
Building on this, we can prove that $\abs{g(z)} \sim g(\abs{z})$,
a fact of independent interest that will be important in Section
\ref{s.rea} below. More precisely, we have the following result.

\begin{thm}\label{hpfc.t.sbf-est}
Let $g \not= 0$ be a special Bernstein function. Then
\begin{equation}\label{hpfc.e.sbf-est}
\frac{1}{3e} \,
\abs{g(z)} \le g(\abs{z}) \le 3e\, \abs{g(z)}
\end{equation}
for all $z$ with $\re z \ge 0$.
\end{thm}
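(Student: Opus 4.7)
The plan is to reduce the claim to a single pointwise estimate on $|1-e^{-w}|$ and then exploit specialness via the ``dual'' function $\tilde g(z) := z/g(z)$, which is Bernstein precisely because $g$ is special. The key scalar estimate is
\[
 |1-e^{-w}| \le \tfrac{2e}{e-1}\bigl(1-e^{-|w|}\bigr) \qquad (\re w \ge 0).
\]
For $\re w\ge 0$, writing $1-e^{-w} = \int_0^1 we^{-tw}\,\ud{t}$ yields $|1-e^{-w}|\le |w|$, while the triangle inequality gives $|1-e^{-w}|\le 1+e^{-\re w}\le 2$; hence $|1-e^{-w}|\le \min(|w|,2)$. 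A short case distinction (for $t\le 1$ using concavity of $t\mapsto 1-e^{-t}$; for $t\ge 1$ using $1-e^{-t}\ge 1-e^{-1}$) then yields $\min(t,2)\le \frac{2}{1-e^{-1}}(1-e^{-t})$ for all $t\ge 0$, and $\frac{2}{1-e^{-1}} = \frac{2e}{e-1}$.

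The next step is to observe that for \emph{any} Bernstein function $h\sim(a,b,\mu)$ and any $z\in \cls{\C}_+$, integrating the pointwise bound against $\mu$ and using $a,b\ge 0$, $\frac{2e}{e-1}\ge 1$, gives
\[
 |h(z)| \le a + b|z| + \tfrac{2e}{e-1}\int_{0+}^\infty (1-e^{-s|z|})\,\mu(\ud{s}) \le \tfrac{2e}{e-1}\,h(|z|).
\]
Applied to $h = g$ this immediately yields the upper bound $|g(z)|\le \frac{2e}{e-1} g(|z|)\le 3e\, g(|z|)$, with no use of specialness.

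For the reverse inequality one applies the \emph{same} estimate to $\tilde g$, which is Bernstein by the assumed specialness of $g$. Continuity of $g$ and $\tilde g$ on $\cls{\C}_+$ together with the pointwise identity $g(z)\tilde g(z) = z$ forces $g(z)\ne 0$ (and $\tilde g(z)\ne 0$) on $\cls{\C}_+\setminus\{0\}$, since an interior zero of either factor would make the other factor infinite, contradicting continuity. From $|\tilde g(z)|\le \frac{2e}{e-1}\tilde g(|z|)$ and the identities $g(z)\tilde g(z) = z$, $g(|z|)\tilde g(|z|) = |z|$, one deduces
\[
 |g(z)| = \frac{|z|}{|\tilde g(z)|} \ge \frac{|z|}{\tfrac{2e}{e-1}\tilde g(|z|)} = \tfrac{e-1}{2e}\, g(|z|) \ge \tfrac{1}{3e}\, g(|z|),
\]
where the final step uses the elementary $3(e-1)\ge 2$. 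At $z = 0$ both inequalities reduce to equality.

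The essential point---and the place where the argument could break down if one dropped the hypothesis---is the passage to the dual: the upper bound on $|g(z)|$ needs only that $g$ be Bernstein, but the matching lower bound needs an upper bound on the dual $z/g(z)$, which is Bernstein precisely under the specialness assumption. The non-special example $g(z) = 1-e^{-z}$, for which $g(2\pi i) = 0$ while $g(2\pi) = 1-e^{-2\pi}>0$, confirms that the lower estimate fails in general without specialness.
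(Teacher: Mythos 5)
Your proof is correct, and it takes a genuinely different route from the paper's. The paper works with the potential $f=1/g$: it invokes the characterization of special Bernstein functions through the representation $f(z)=c+\int_0^\infty e^{-sz}v(s)\,\ud{s}$ with $v$ decreasing, and proves the inequality $\abs{f(z)}\le 3e\,f(\abs{z})$ (equivalently, the right-hand half of \eqref{hpfc.e.sbf-est}) directly by an integration-by-parts argument on that Laplace integral; the left-hand half is then obtained by passing to $z/g(z)$. You do the opposite: your scalar bound $\abs{1-e^{-w}}\le \min(\abs{w},2)\le \tfrac{2e}{e-1}(1-e^{-\abs{w}})$, integrated against the L\'evy triple, gives $\abs{h(z)}\le \tfrac{2e}{e-1}\,h(\abs{z})$ for \emph{every} Bernstein function $h$ --- no specialness, no potential representation --- and specialness enters only when you dualize via $\tilde g(z)=z/g(z)$ to get the matching reverse inequality. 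Your route is more elementary (it never leaves the L\'evy--Khintchine representation, which the paper itself already uses for complex $z$ in Lemma \ref{rea.l.bf-est}), it yields the sharper constant $\tfrac{2e}{e-1}\approx 3.16$ in place of $3e$, and it cleanly isolates which half of the statement genuinely requires specialness --- a point your counterexample $g(z)=1-e^{-z}$, with $g(2\pi i)=0$, makes concrete. What the paper's approach buys in exchange is a workout of the structural description of special Bernstein functions (decreasing densities of the associated potentials), which it exploits elsewhere. Minor stylistic remark: the non-vanishing of $g$ and $\tilde g$ on $\cls{\C}_+\ohne\{0\}$ follows at once from evaluating the identity $g(z)\tilde g(z)=z$ (extended by analyticity and continuity) at the putative zero; the detour through "the other factor would be infinite" is not needed.
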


\begin{proof}
By continuity of $g$ in the closed right half-plane, it suffices
to prove \eqref{hpfc.e.sbf-est} only for $\re z > 0$.  Let $f :=
1/g$, and $c,v$ as in \eqref{hpfc.e.sbf-rep}. We first establish
the right-hand inequality, which  is equivalent to
\begin{equation}\label{hpfc.e.sbf-est2}
 \abs{f(z)} \le 3e f(\abs{z})\qquad (\re z > 0).
\end{equation}
Employing  integration by parts for
(improper) Riemann--Stieltjes integrals
\cite[Theorem 3.3.1]{HilPhi}, we  compute with $t, \re z > 0$:
\begin{align*}
 f(z) & = c + \int_0^t e^{-zs}v(s)\, \ud{s} -
\frac{1}{z}\int_t^\infty v(s) \, \ud{e^{-zs}}
\\ & =  c+ \int_0^t e^{-zs}v(s)\, \ud{s} -  \frac{v(s)e^{-zs}}{z}\Big|_{s=t}^{s= \infty}
+ \frac{1}{z}\int_t^\infty e^{-sz}\ud{v(s)}
\\ & = c + \int_0^t e^{-zs}v(s)\, \ud{s} +  \frac{v(t)e^{-zt}}{z}
+  \frac{1}{z}\int_t^\infty e^{-sz}\ud{v(s)}.
\end{align*}
Hence, since $v$ is decreasing,
\begin{align*}
\abs{f(z)} & \le
c + \int_0^t v(s)\, \ud{s} +  \frac{v(t)e^{-t\re z}}{\abs{z}}
-  \frac{1}{\abs{z}}\int_t^\infty e^{-s\re z}\ud{v(s)}
\\ & \leq
c + \int_0^t v(s)\, \ud{s} +  \frac{e^{-t\re z}}{\abs{z}}
\left( v(t) - \int_t^\infty \ud{v(s)}\right)
\\  & \le
c + \int_0^t v(s)\, \ud{s} +  \frac{2 v(t)}{\abs{z}}
\le c + \left(1 + \frac{2}{t\abs{z}}\right) \int_0^t v(s)\, \ud{s}
\\ & \le
c + \left(1 + \frac{2}{t\abs{z}}\right) e^{t \abs{z}}
\int_0^t e^{-s \abs{z}} v(s)\, \ud{s}
\le \left(1 + \frac{2}{t\abs{z}}\right) e^{t \abs{z}} f(\abs{z}).
\end{align*}
Now we insert $t = 1/\abs{z}$ and arrive at \eqref{hpfc.e.sbf-est2}, concluding
the proof of the right-hand inequality in \eqref{hpfc.e.sbf-est}.

The left-hand inequality follows immediately be applying
the right-hand inequality to the special Bernstein function
$z/g(z)$.
\end{proof}

\subsection*{Stieltjes Functions}

A function $f : (0, \infty) \to \R_+$ is called a {\em Stieltjes}
function if it can be written as
\begin{equation}\label{hpfc.e.stieltjes}
f(z) = \frac{a}{z} + b + \int_{0+}^\infty \frac{\mu(\ud{s})}{z + s}
\qquad (z > 0),
\end{equation}
where $a, b \ge 0$ and $\mu$ is a positive Radon measure on $(0, \infty)$
satisfying
\[ \int_{0+}^{\infty} \frac{\mu(\ud{s})}{1 + s} < \infty.
\]
In this case, $\mu$ is called a {\em Stieltjes measure} and
\eqref{hpfc.e.stieltjes} is called the {\em Stieltjes representation} for $f$,
since such a representation is unique, see \cite[Chapter 2]{SchilSonVon2010}.

\begin{exa}\label{hpfc.exa.log}
We show that the function
\[ f(z) := \frac{\log z}{z-1}
\]
is a Stieltjes function. (Here and in the following, $\log z$ denotes
the principal branch of the complex logarithm.)
To this aim, we depart from the representation
\begin{equation}\label{hpfc.e.log-cbf}
 \log(1 + z) = \int_0^\infty (1 - e^{-sz}) e^{-s} \frac{\ud{s}}{s}
\end{equation}
valid for $\re z > -1$. (To see this, just take the derivative of the right-hand
side with respect to $z$.) By changing $z$ to $z -1$ we obtain
\[ {\log z} = \int_0^\infty (1 - e^{-s(z-1)}) e^{-s} \frac{\ud{s}}{s}
\]
valid for $\re z > 0$. Hence
\begin{align*}
\frac{\log z}{z-1} &= \int_0^\infty \frac{1 - e^{-s(z-1)}}{z-1} e^{-s}\,
 \frac{\ud{s}}{s} = \int_0^\infty \int_0^s e^{-t(z-1)} \, \ud{t}\,
 e^{-s}\, \frac{\ud{s}}{s} \\ &= \int_0^\infty \int_t^\infty
 \frac{e^{-(s-t)}}{s} \, \ud{s}\, e^{-tz}\, \ud{t} =
\int_0^\infty
 \left[\int_0^\infty \frac{e^{-s}}{s+t} \, \ud{s}\right]\, e^{-tz}\,
 \ud{t}.
\end{align*}
A change of variable $s \mapsto zs$ (with $z > 0$) and $t \mapsto st$
leads to
\begin{align*}
 \frac{\log z}{z-1} & =
\int_0^\infty
 \left[\int_0^\infty \frac{z e^{-sz}}{sz+t} \, \ud{s}\right]\, e^{-tz}\,
 \ud{t}
=
\int_0^\infty  \int_0^\infty
\frac{zs e^{-s(z+tz)}}{s(z+t)} \, \ud{s} \, \ud{t}
\\ &
=
\int_0^\infty  \frac{z}{z + zt} \frac{\ud{t}}{z + t}
=
\int_0^\infty  \frac{1}{z + t} \frac{\ud{t}}{1+ t},
\end{align*}
and this is a Stieltjes representation for $f$.
\end{exa}

Stieltjes functions are interesting in our context because
of the following result \cite[Proposition 7.1 and Theorem 7.3]{SchilSonVon2010}.

\begin{thm}\label{hpfc.t.stieltjes-cbf}
The function $f\not=0$ is a Stieltjes function if and only if $1/f$
is a complete Bernstein function if and only if $zf(z)$ is a complete Bernstein
function. In particular, every non-zero Stieltjes function is a potential.
\end{thm}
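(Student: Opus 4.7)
The plan is to prove the two equivalences separately; the second one is essentially immediate from a cited fact, while the first requires an explicit transformation of representations.

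First I would show the equivalence ``$f \neq 0$ is Stieltjes iff $zf(z)$ is a complete Bernstein function''. For the forward direction, starting from the Stieltjes representation
\[
f(z) = \frac{a}{z} + b + \int_{0+}^\infty \frac{\mu(\ud{s})}{z+s},
\]
multiplication by $z$ yields
\[
zf(z) = a + bz + \int_{0+}^\infty \frac{z}{z+s}\,\mu(\ud{s}).
\]
The key computational ingredient is the identity
\[
\frac{z}{z+s} = \int_0^\infty (1 - e^{-zr})\, s e^{-sr}\, \ud{r} \qquad (s > 0,\, \re z > 0),
\]
which follows at once by splitting into two Laplace transforms of exponentials. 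Substituting and applying Fubini, one obtains
\[
zf(z) = a + bz + \int_0^\infty (1 - e^{-zr})\, m(r)\, \ud{r},
\qquad m(r) := \int_{0+}^\infty s e^{-sr}\,\mu(\ud{s}),
\]
and $m$ is completely monotone on $(0,\infty)$ by Bernstein's theorem. To conclude that this is a genuine complete Bernstein representation, I would verify the Bernstein integrability $\int_0^\infty \tfrac{r}{1+r}\, m(r)\,\ud{r} < \infty$; by Fubini this reduces to the finiteness of $\int_{0+}^\infty \tfrac{s}{1+s} \cdot \tfrac{1}{s}\cdot [\text{something bounded in } s]\,\mu(\ud{s})$, which matches the Stieltjes condition $\int \mu(\ud{s})/(1+s) < \infty$. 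Alternatively, one can just evaluate $zf(z)$ at, say, $z=1$.

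For the converse, suppose $zf(z)$ is a complete Bernstein function with representation $a + bz + \int_0^\infty (1 - e^{-zr})\, m(r)\,\ud{r}$ where $m$ is completely monotone. By Bernstein's theorem, write $m = \Lap\tilde\mu$ for some Laplace-transformable positive Radon measure $\tilde\mu$ on $(0,\infty)$. Reversing the above Fubini computation, and dividing through by $z$, recovers the Stieltjes representation for $f$ with Stieltjes measure $\mu(\ud{s}) := s^{-1}\tilde\mu(\ud{s})$. Again the integrability condition for $\mu$ is read off from that for $m$.

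The second equivalence ``$zf(z) \in \CBF$ iff $1/f \in \CBF$'' is now essentially a tautology given the duality quoted from \cite[Proposition 7.1]{SchilSonVon2010} in the excerpt: for $g \neq 0$, $g$ is a complete Bernstein function if and only if $z/g(z)$ is one. Apply this with $g(z) := zf(z)$, noting $z/g(z) = 1/f(z)$. Finally, the ``in particular'' assertion is immediate: if $f \neq 0$ is Stieltjes, then $g := 1/f$ is a nonzero complete Bernstein function (hence a nonzero Bernstein function), so $f = 1/g$ is by definition a potential.

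The main obstacle, as always in this context, is the measure-theoretic bookkeeping: carefully tracking the positivity and integrability conditions back and forth through the Fubini swap, and making sure that the Stieltjes integrability condition $\int \mu(\ud{s})/(1+s) < \infty$ is precisely equivalent to the Bernstein integrability condition $\int r/(1+r)\, m(r)\,\ud{r} < \infty$ for the transformed measure. The computational content beyond that is very light.
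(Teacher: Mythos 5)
The paper does not prove this statement at all: it is quoted verbatim from Schilling--Song--Vondra\v{c}ek \cite[Proposition 7.1 and Theorem 7.3]{SchilSonVon2010}, so there is no internal proof to compare against. Your argument is a correct and essentially standard proof of the quoted result. The identity $\frac{z}{z+s}=\int_0^\infty(1-e^{-zr})\,se^{-sr}\,\ud{r}$ is right, the Fubini swap converts the Stieltjes representation into a Bernstein representation with the completely monotone density $m(r)=\int_{0+}^\infty se^{-sr}\mu(\ud{s})$, and the integrability bookkeeping works because $\int_0^\infty \min(1,r)\,se^{-sr}\,\ud{r}=\frac{1-e^{-s}}{s}\asymp\frac{1}{1+s}$. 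Deducing the $1/f$ equivalence from the involution $g\mapsto z/g(z)$ on $\CBF\setminus\{0\}$ (which the paper quotes separately as Proposition 7.1 of the same source) is legitimate and not circular within the paper's framework.

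One point you should make explicit in the converse direction: Bernstein's theorem represents the completely monotone density $m$ as the Laplace transform of a positive measure $\tilde\mu$ on $[0,\infty)$, not a priori on $(0,\infty)$, and your definition $\mu(\ud{s}):=s^{-1}\tilde\mu(\ud{s})$ requires $\tilde\mu(\{0\})=0$. This does follow, since a positive atom at $0$ would give $m(r)\ge c>0$ for all $r$ and hence $\int_0^\infty\frac{r}{1+r}\,m(r)\,\ud{r}=\infty$, contradicting the Bernstein integrability condition; but as written this step is silently assumed. With that sentence added, the proof is complete.
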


In particular, by Example  \ref{hpfc.exa.log} above,
the function
\[ f(z) = \frac{\log z}{z-1}
\]
is a potential, and $1/f$ is a complete Bernstein function. This
fact will be used in the following.

\medskip

The next result gives  a useful characterization of Stieltjes
functions \cite[Theorem 6.2 and Corollary 7.4]{SchilSonVon2010}.

\begin{thm}\label{hpfc.t.sti-char}
A non-zero function $f$ is a Stieltjes function if and only if $f$
admits an analytic extension to $\C \ohne (-\infty,0]$ such that
\begin{eqnarray*}
&& f(0+) := \lim_{t \searrow 0} f(t) \,\, \text{exists}, \quad  f(0+) \in (0, \infty], \\
&\text{and}&\\
&& \im z \cdot \im f(z) \le 0 \quad \text{for all $z \notin (-\infty, 0]$.}
\end{eqnarray*}
\end{thm}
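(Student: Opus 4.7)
The plan is to treat the two implications separately. The forward direction is a direct verification from \eqref{hpfc.e.stieltjes}: each $z\mapsto 1/(z+s)$ is analytic on $\C\setminus(-\infty,0]$, and the integrability $\int_{0+}^\infty(1+s)^{-1}\mu(\ud{s})<\infty$ guarantees uniform convergence on compact subsets, giving analyticity of $f$ on $\C\setminus(-\infty,0]$. Monotone convergence as $z\searrow 0$ through $(0,\infty)$ yields $f(0+)\in(0,\infty]$ since $f\neq 0$ forces $(a,b,\mu)\neq 0$, and the identity
\[
\im f(z) = -a\,\frac{\im z}{|z|^2}-\int_{0+}^\infty\frac{\im z}{|z+s|^2}\,\mu(\ud{s})
\]
yields $\im z\cdot\im f(z)\le 0$ at once.

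For the backward direction I would first argue that $f$ is real-valued on $(0,\infty)$: the inequality $\im z\cdot\im f(z)\le 0$ approached from the upper and lower half-planes, together with continuity of the analytic extension across the positive axis, forces $\im f(x)=0$ for $x>0$. Viewing $-f$ as a Pick function on the open upper half-plane, the Cauchy--Riemann equations give $\partial_x\re(-f)=\partial_y\im(-f)\ge 0$ on $(0,\infty)$, so $f$ is non-increasing there. I would then invoke the Nevanlinna--Herglotz representation for $-f$: there exist $\alpha\in\R$, $\beta\ge 0$ and a positive measure $\sigma$ on $\R$ with $\int(1+t^2)^{-1}d\sigma(t)<\infty$ such that
\[
-f(z) = \alpha+\beta z+\int_\R\Big(\frac{1}{t-z}-\frac{t}{1+t^2}\Big)d\sigma(t)\qquad(\im z>0).
\]
Since $f$ extends analytically across $(0,\infty)$ and is real there, the Stieltjes inversion formula forces $\sigma$ to be supported on $(-\infty,0]$. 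Substituting $t=-s$ and separating the atom $a:=\sigma(\{0\})$ from the measure $\mu$ obtained by pushing $\sigma|_{(-\infty,0)}$ forward onto $(0,\infty)$ produces the candidate representation
\[
f(z) = \frac{a}{z}+b-\beta z+\int_{0+}^\infty\frac{d\mu(s)}{z+s},\qquad b := -\alpha-\int_{0+}^\infty\frac{s\,d\mu(s)}{1+s^2}.
\]

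The main technical obstacle is the final upgrade to an honest Stieltjes representation. One must rule out $\beta>0$, show $b\ge 0$, and promote the Nevanlinna tail bound $\int(1+s^2)^{-1}d\mu(s)<\infty$ to the strictly stronger Stieltjes tail bound $\int(1+s)^{-1}d\mu(s)<\infty$; without the latter the split above is only formal, as the example $\mu(\ud{s})=\ud{s}$ (yielding $f(z)=-\log z$, which satisfies every hypothesis except positivity on $(0,\infty)$) shows. Each of these obstructions disappears once we invoke the positivity of $f$ on $(0,\infty)$ (implicit in the definition of a Stieltjes function): positivity together with the $-\beta z$ term's linear decay on $\R_+$ forces $\beta=0$ and then $b=f(+\infty)\ge 0$, and Fatou's lemma applied to $f$ at a fixed $x>0$ delivers the Stieltjes tail integrability of $\mu$. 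A conceptually cleaner route that sidesteps these technicalities is to pass to $g:=1/f$ and invoke Theorem~\ref{hpfc.t.stieltjes-cbf}: the hypotheses translate verbatim into the analogous characterization of complete Bernstein functions ($g$ analytic on $\C\setminus(-\infty,0]$, $\im z\cdot\im g(z)\ge 0$, $g(0+)\in[0,\infty)$, $g\ge 0$ on $(0,\infty)$), a statement in which the required integrability is already built into the definition of a complete Bernstein function.
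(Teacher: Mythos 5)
The paper offers no proof of Theorem \ref{hpfc.t.sti-char}: it is quoted from \cite[Theorem 6.2 and Corollary 7.4]{SchilSonVon2010}, so there is no in-paper argument to compare yours against. Judged on its own terms, your proof is essentially correct and follows the standard route. The forward direction is routine and your formula for $\im f$ is right. In the backward direction you correctly identify the decisive input, namely the positivity of $f$ on $(0,\infty)$, which is built into the paper's definition of a Stieltjes function as a map $(0,\infty)\to\R_+$ and without which the statement is false (your $-\log z$ example). Your Nevanlinna--Herglotz argument with Stieltjes inversion localising $\sigma$ to $(-\infty,0]$ is exactly the mechanism by which \cite{SchilSonVon2010} proves the complete Bernstein analogue (their Theorem 6.2), so the ``cleaner alternative'' via $g=1/f$ and Theorem \ref{hpfc.t.stieltjes-cbf} does not avoid the work; it merely relocates it to the CBF side. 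Two steps deserve tightening. First, to rule out $\beta>0$ you need $f(x)/x\to-\beta$ as $x\to+\infty$, i.e.\ that the integral term is $o(x)$; this is the standard Herglotz asymptotic and follows by dominated convergence from $\int_{\R}(1+t^2)^{-1}\,\sigma(\ud{t})<\infty$, but it should be stated. Second, ``Fatou at a fixed $x$'' does not literally apply, because the integrand $\frac{1}{x+s}-\frac{s}{1+s^2}$ is not sign-definite; the clean version is to let $x\to+\infty$ once $\beta=0$ is known: the integrand decreases monotonically to $-s/(1+s^2)$, so monotone convergence together with $f\ge 0$ yields $\int_{0+}^\infty\frac{s}{1+s^2}\,\mu(\ud{s})<\infty$ and $b\ge 0$ simultaneously, and combining this with $\int_{0+}^\infty\frac{\mu(\ud{s})}{1+s^2}<\infty$ and the elementary bound $\frac{1}{1+s}\le\frac{1+s}{1+s^2}$ gives the Stieltjes tail condition. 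With these two points made explicit the argument is complete.
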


Note that if $g(z) = \log(1 + z)$ then
\[ \frac{g(z) - g(1/z)}{z-1} = \frac{\log(1+z) - \log(1 + 1/z)}{z-1}
= \frac{\log z}{z-1},
\]
and, since
\[
\frac{e^{-s}}{s}=\int_{1}^{\infty}e^{-s\tau}\, \ud{\tau},
\]
$\log(1 + z)$ is a complete Bernstein function, by \eqref{hpfc.e.log-cbf}.
Hence the result of Example \ref{hpfc.exa.log}
is actually a consequence of the following theorem.

\begin{thm}\label{hpfc.t.cbf-sf}
Let $g\neq 0$ be a complete Bernstein function with $\lim_{t
\nearrow \infty} g(t)/t = 0$. Then
\[ f(z) := \frac{g(z) - g(1/z)}{z-1}
\]
is a Stieltjes function, hence a potential.
\end{thm}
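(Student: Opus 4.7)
The plan is to work with the Nevanlinna--Stieltjes representation of complete Bernstein functions and produce an \emph{explicit} Stieltjes representation for $f$, rather than trying to verify the (more abstract) imaginary-part criterion of Theorem \ref{hpfc.t.sti-char}. Since $g$ is a complete Bernstein function with $\lim_{t\to\infty}g(t)/t = 0$, it admits the representation
\[
g(z) = a + \int_{0+}^{\infty}\frac{z}{z+s}\, m(ds), \qquad \int_{0+}^{\infty} \frac{m(ds)}{1+s} < \infty,
\]
with $a \ge 0$; the linear coefficient vanishes because of the growth hypothesis on $g$.

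The first step is to compute $g(z) - g(1/z)$ directly. Reducing the integrand $\frac{z}{z+s} - \frac{1}{1+sz}$ to $\frac{s(z^2-1)}{(z+s)(1+sz)}$ gives
\[
g(z) - g(1/z) = (z-1)(z+1)\int_{0+}^\infty \frac{s}{(z+s)(1+sz)}\, m(ds),
\]
whence the factor $z-1$ cancels and
\[
f(z) = (z+1)\int_{0+}^\infty \frac{s}{(z+s)(1+sz)}\, m(ds).
\]
Writing $1 + sz = s(z + 1/s)$ for $s > 0$ and applying partial fractions yields
\[
\frac{s(z+1)}{(z+s)(1+sz)} = \frac{1}{1+s}\left[\frac{s}{z+s} + \frac{1}{z+1/s}\right],
\]
so the integrand becomes a convex combination of two Stieltjes kernels in $z$.

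The second step is to turn the pole at $z = -1/s$ into an honest pole $z = -t$ via the substitution $t = 1/s$. Letting $m^{\#}$ denote the pushforward of $m$ under $s \mapsto 1/s$, a direct computation produces
\[
f(z) = \int_{0+}^\infty \frac{\rho(ds)}{z+s}, \qquad \rho(ds) := \frac{s}{1+s}(m + m^{\#})(ds).
\]
To certify that this is a bona fide Stieltjes representation one must verify $\int (1+s)^{-1} \rho(ds) < \infty$; this reduces to $\int \frac{s}{(1+s)^2}\, m(ds) < \infty$ together with its $m^{\#}$-counterpart, both of which follow from $\frac{s}{(1+s)^2} \le \frac{1}{1+s}$ (the change of variable $t = 1/s$ maps the function $\frac{s}{(1+s)^2}$ to itself, so the $m^{\#}$-integral equals the $m$-integral). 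The closing clause ``hence a potential'' is then Theorem \ref{hpfc.t.stieltjes-cbf}.

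The main obstacle is the bookkeeping around the change of variable $s \mapsto 1/s$: correctly expressing the second summand as a Stieltjes integral against the pushforward $m^{\#}$ and checking the resulting integrability. The more structural route via Theorem \ref{hpfc.t.sti-char} runs into trouble because the obvious sign information --- $\im(g(z) - g(1/z)) \ge 0$ (by $g$ being Pick) and $\im(z-1) > 0$ for $z \in \mathbb{H}$ --- is insufficient on its own to pin down the sign of $\im f(z)$; it is precisely the explicit integral decomposition above that supplies the missing structural data.
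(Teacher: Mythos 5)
Your proposal is correct, and it shares its first half with the paper's proof: both start from the representation $g(z)=a+\int_{0+}^\infty \frac{z}{z+s}\,m(\ud{s})$ (the linear term vanishing by the growth hypothesis) and both reduce $f$ to the same integral $f(z)=\int_{0+}^\infty \frac{(z+1)s\,m(\ud{s})}{(z+s)(1+zs)}$. From there the two arguments genuinely diverge. The paper verifies the abstract criterion of Theorem \ref{hpfc.t.sti-char}: it computes $\im\bigl[(z+1)/((z+s)(1+zs))\bigr]$ by brute force to show $\im z\cdot \im f(z)\le 0$, and separately checks monotonicity of the kernel in $t$ to guarantee $f(0+)\in(0,\infty]$. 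You instead split the kernel by partial fractions,
\[
\frac{s(z+1)}{(z+s)(1+sz)}=\frac{1}{1+s}\Bigl[\frac{s}{z+s}+\frac{1}{z+1/s}\Bigr],
\]
and push the second summand forward under $s\mapsto 1/s$ to obtain the explicit Stieltjes measure $\rho(\ud{s})=\frac{s}{1+s}(m+m^{\#})(\ud{s})$; the integrability check via the inversion-invariance of $s/(1+s)^2$ is exactly right. Your route buys strictly more: it produces the representing measure of $f$ in closed form (from which, e.g., asymptotics of $f$ could be read off) and replaces the paper's somewhat opaque imaginary-part computation by elementary algebra plus a symmetry observation; the paper's route avoids the change-of-variables bookkeeping but yields only the qualitative conclusion. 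Your closing remark is also accurate: the naive sign argument ($g$ Pick, $\im(z-1)>0$) cannot work, which is why the paper resorts to the explicit kernel computation and you to the explicit decomposition. The only caveat, which affects the paper equally, is the degenerate case $m=0$ (i.e., $g$ a positive constant), where $f\equiv 0$ and the clause ``hence a potential'' does not apply; this is harmless.
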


\begin{proof}
By Theorem \ref{hpfc.t.sti-char} the function $t\mapsto g(t)/t$ is a
Stieltjes function. Since $\lim_{t \to \infty} g(t)/t = 0$,
we have
\[
g(z)=a +\int_{0+}^\infty \frac{z}{z+s}\, \rho(\ud{s})
\qquad   (z  > 0),
\]
with $a\geq 0$ and the representing measure $\rho$ satisfying
$\int_{0+}^\infty\,\frac{\rho(\ud{s})}{1+s}<\infty$.
Hence
\[
f(z) =\frac{1}{z-1}\int_{0+}^\infty
\left[\frac{z }{z+s} -\frac{1}{1+z s}
\right]\,\rho(\ud{s})
=
\int_{0+}^\infty \frac{(z+1)s\,\rho(\ud{s})}{(1+zs)(z+s)}
\qquad (z>0).
\]
So, the function $f:(0,\infty)\to (0,\infty)$ extends
analytically into $\C\setminus (-\infty,0]$. Moreover, for $s > 0$ and
$z\in \C$, $z\not=-s, -1/s$ we have
\begin{align*}
\abs{1 + zs}^2 & \abs{z + s}^2  \frac{(z+1)}{(z + s)(1 +zs)}
=
(z + 1) (1 + \konj{z}s) ( s + \konj{z})
\\ & =
s + (s^2 +1) \abs{z}^2 + \konj{z}( s^2 + 1 + s \abs{z}^2)
+ s \konj{z}^2 + sz.
\end{align*}
Taking imaginary parts, we obtain
\begin{align*}
\abs{1 + zs}^2 & \abs{z + s}^2  \im \frac{(z+1)}{(z + s)(1 +zs)}
\\ & = - (\im z) ( s^2 + 1 + s \abs{z}^2)  + (\im z)s  - 2 s(\im z)(\re z)\\
& = - (\im z)\left( s^2 + 1 + s \abs{z}^2 - s  + 2s\re z\right)
\\ & =
- (\im z)\left( (s -1)^2 + s (\abs{z}^2 + 2 \re z + 1)\right).
\end{align*}
Since $\abs{z}^2 + 2 \re z + 1 \ge (1 + \re z)^2 \ge 0$ for all $z\in \C$, we
see that
\[ \im z \cdot \im f(z)
= \frac{ - (\im z)^2 \left( (s -1)^2 + s (\abs{z}^2 + 2 \re z + 1)\right)%
}{ \abs{1 + zs}^2  \abs{z + s}^2} \le 0.
\]
whenever $z\in \C$ and $z \not= -s, -1/s$. Furthermore,
\begin{align*}
\frac{\ud}{\ud{t}}\left(
\frac{(t+1)}{(1+ts)(t+s)}\right)
& =\frac{ (1 + ts)(t +s) - (1 + t)(s^2 + 2ts + 1)
}{(1+ts)^2 (t+s)^2}
\\ &
= \frac{-(s-1)^2 - s(1 + t)^2
}{(1+ts)^2 (t+s)^2} \le 0
\end{align*}
for $t > 0$, and hence $\lim_{t \searrow 0} f(t) $ exists and
belongs to $(0, \infty]$. By  Theorem \ref{hpfc.t.sti-char} again,
$f$ is a Stieltjes function.
\end{proof}

\section{Estimating Rates in Terms of the Pre-Laplace Transform}\label{s.rates}

For $t > 0$  we define
\[ \Ce_t(z) := \frac{1}{t} \int_0^t e^{-sz}\, \ud{s}
= \frac{1 - e^{-tz}}{tz} \qquad (\re z \ge 0).
\]
Then $\Ce_t(z) = \Ce_1(tz) \in \Wip(\C_+)$ as well as $z \Ce_t(z) \in \Wip(\C_+)$,
with
\begin{equation}\label{rates.e.bound1}
\sup_{t > 0} \norm{\Ce_t}_{\Wip} + \sup_{t > 0} \norm{tz\Ce_t}_{\Wip} < \infty.
\end{equation}
For a Bernstein function $g \sim(a , b, \mu)$ we define
\begin{equation}\label{rates.e.r}
r(t) = r[g](t) := \frac{a}{2} + \frac{b}{t}  + \int_{0+}^\infty \min(s/t,1)\, \mu(\ud{s})
\qquad (t > 0).
\end{equation}
Note that by Fubini's theorem
\[ \int_{0+}^\infty \min(s/t,1)\, \mu(\ud{s}) =
\frac{1}{t} \int_{0}^t \mu(r,\infty)\, \ud{r} \qquad (t > 0).
\]
The following theorem is the reason why we are interested in the
function $r(t)$.

\begin{thm}\label{rates.t.rate}
Let $g \sim (a ,b ,\mu)$ be a Bernstein function and $r= r[g]$ as above.
Then  $\Ce_t \, g \in \Wip(\C_+)$ for each $t > 0$ and
\[ \norm{\Ce_t \, g}_{\Wip} = 2r(t) \qquad (t > 0).
\]
\end{thm}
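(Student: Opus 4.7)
The plan is to compute the Laplace pre-image of $\Ce_t\, g$ as an explicit signed measure on $\R_+$, and then to show that this measure has a clean sign structure which forces its total variation to be exactly $2r(t)$. First I would substitute the L\'evy--Khintchine representation \eqref{hpfc.e.bf} into the product, obtaining formally
\[
\Ce_t(z)\, g(z) = a\, \Ce_t(z) + b z\, \Ce_t(z) + \int_{0+}^{\infty} (1-e^{-sz})\, \Ce_t(z)\, \mu(\ud s).
\]
The first summand $a\Ce_t(z)$ is the Laplace transform of the nonnegative density $(a/t)\mathbf{1}_{[0,t]}(u)\,\ud u$, with $A^1_+$-norm $a$. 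The second summand $bz\Ce_t(z) = b(1-e^{-tz})/t$ is the Laplace transform of $(b/t)(\delta_0 - \delta_t)$, with $A^1_+$-norm $2b/t$.

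For the integrand, I would use that $(1-e^{-sz})/z$ is the Laplace transform of $\mathbf{1}_{[0,s]}\,\ud u$ and $(1-e^{-tz})/t$ is the Laplace transform of $(\delta_0-\delta_t)/t$; taking the convolution shows that $(1-e^{-sz})\Ce_t(z)$ is the Laplace transform of the density
\[
\kappa_s(u) := \tfrac{1}{t}\mathbf{1}_{[0,s]}(u) - \tfrac{1}{t}\mathbf{1}_{[t,t+s]}(u).
\]
A direct case analysis (whether $s\le t$ or $s>t$) gives $\|\kappa_s\|_{L^1} = 2\min(s/t,1)$. Integrating these bounds against $\mu$ already yields $\|\Ce_t g\|_{\Wip}\le 2r(t)$, which also justifies that $\Ce_t g \in \Wip(\C_+)$.

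To get equality I would assemble the full signed measure $\sigma_t$ whose Laplace transform is $\Ce_t g$. Adding the three contributions and applying Fubini, the absolutely continuous density of $\sigma_t$ on $(0,\infty)$ equals
\[
\tfrac{a}{t} + \tfrac{1}{t}\mu(u,\infty) \quad\text{for } 0<u<t, \qquad -\tfrac{1}{t}\mu(u-t,u] \quad\text{for } u>t,
\]
and the atomic part is $(b/t)\delta_0 - (b/t)\delta_t$. The crucial observation is that all summands are nonnegative on $[0,t]$ and nonpositive on $[t,\infty)$, so no cancellation occurs when computing $|\sigma_t|(\R_+)$: the total variation is the positive mass on $[0,t]$ plus the absolute value of the mass on $(t,\infty)$. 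A Fubini computation gives $\int_0^t \mu(u,\infty)\,\ud u = \int_{0+}^\infty \min(s,t)\,\mu(\ud s) = \int_t^\infty \mu(u-t,u]\,\ud u$, so both pieces reduce to the same integral and one obtains
\[
\|\Ce_t g\|_{\Wip} = \Bigl(a+\tfrac{b}{t}+\tfrac{1}{t}\!\int\!\min(s,t)\mu(\ud s)\Bigr) + \Bigl(\tfrac{b}{t}+\tfrac{1}{t}\!\int\!\min(s,t)\mu(\ud s)\Bigr) = 2r(t).
\]

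The main obstacle is the equality claim: the bound $\le 2r(t)$ follows painlessly from the subadditivity of the $\Wip$-norm, but matching it from below requires exhibiting the correct sign structure of $\sigma_t$. The key point I would emphasize is that the ``positive contribution from $a,b,\mu$'' is concentrated on $[0,t]$ while the ``negative contribution from the shift by $t$ in $\Ce_t$'' is concentrated on $[t,\infty)$, and the atom $-(b/t)\delta_t$ lies exactly at the interface but has the right (nonpositive) sign there. Once this geometric picture is clear, the computation is routine Fubini.
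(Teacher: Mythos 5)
Your proof is correct and follows essentially the same route as the paper's: both identify the representing measure of $\Ce_t\,g$ explicitly (atoms $\pm b/t$ at $0$ and $t$, density $\tfrac1t(a+\mu(u,\infty))$ on $(0,t)$ and $-\tfrac1t\mu(u-t,u)$ on $(t,\infty)$) and then exploit the sign separation at $u=t$ to read off the total variation. The only cosmetic difference is that you equate the positive and negative masses by a direct Fubini computation, whereas the paper (in the case $a=0$) deduces it from $g(0+)=0$ by evaluating at $z=0$.
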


\begin{proof}
Suppose first that $a = 0$. Applying Fubini's theorem twice we compute
\begin{align*}
t\Ce_t(z)g(z) & =
\int_0^t e^{-rz}\, \ud{r} \left(bz + \int_{0+}^\infty (1 - e^{-sz})\,
\mu(\ud{s})\right)
\\ & =
b(1 - e^{-tz}) + \int_0^t \int_{0+}^\infty \left(e^{-zr} -
e^{-z(r+s)}\right) \, \mu(\ud{s})\, \ud{r}
\\ & =
b(1 - e^{-tz}) + \int_{0+}^\infty \left(
\int_0^t e^{-zr}\, \ud{r} - \int_s^{s+t} e^{-zr} \,\ud{r} \right) \, \mu(\ud{s})
\\ & =
b(1 - e^{-tz}) + \int_{0+}^\infty \left(
\int_0^{\min(s,t)} e^{-zr}\, \ud{r}  - \int_{\max(s,t)}^{s+t} e^{-zr} \,\ud{r}
\right) \, \mu(\ud{s})
\\ & =
b(1 - e^{-tz}) +  \int_0^t \mu(r, \infty)\,  e^{-rz}\, \ud{r}
- \int_t^\infty \mu(r-t, r)\, e^{-zr}\,\ud{r}.
\end{align*}
This is the Laplace transform of the measure
$b\delta_0 - b\delta_t + \psi(r)\,\ud{r}$, where
\[
\psi(r)=\chi_{[0,t]}(r)\mu(r, \infty) -\chi_{(t, \infty)}(r)
\mu(r-t, r), \qquad (r \in \mathbb R_+).
\]
Hence we see that
\[ \norm{t\Ce_t \, g}_{\Wip} =
2b + \int_0^t \mu(r, \infty)\, \ud{r} +
\int_t^\infty \mu(r-t, r)\,\ud{r}.
\]
However, since $g(0+)=0$,  the two integrals here must be  equal, and hence
\[ \norm{t\Ce_t \, g}_{\Wip} = 2b + 2 \int_0^t \mu(r, \infty)\, \ud{r}
= 2t r(t)
\]
as claimed. If $a > 0$ then we have to add the term $a \int_0^t e^{-rz}\, \ud{r}$
in each line of the computation from above. This leads to the representation
\[ t\Ce_t(z) g(z) =
b(1 - e^{-tz}) +  \int_0^t (a + \mu(r, \infty))\,  e^{-rz}\, \ud{r}
- \int_t^\infty \mu(r-t, r)\, e^{-zr}\,\ud{r}
\]
and hence to the norm identity
\[ \norm{t\Ce_t \, g}_{\Wip} = at + 2b + 2 \int_0^t \mu(r,\infty)\, \ud{r}
= 2t r(t).
\]
\end{proof}

The next result lists some properties of the function $r[g]$, and
answers the question which functions $r$ on $(0, \infty)$ can arise
as $r = r[g]$.

\begin{thm}\label{rates.t.char}
For a  function $r : (0, \infty) \to (0, \infty)$ the following
assertions are equivalent.
\begin{aufzii}
\item  There exists a Bernstein function $0 \not= g\sim (a,b,\mu)$ such that
$r = r[g]$.
\item  The function $t\mapsto tr(t)$ is strictly positive, increasing and concave.
\end{aufzii}
Moreover, if {\rm (i)} or {\rm (ii)} is satisfied, then the
following assertions hold.
\begin{aufzi}
\item  $r$ is continuous and decreasing,  with $\lim_{t\to \infty} r(t) = a/2$.
\item  $\lim_{t\searrow 0} tr(t)= b$.
\item  The function $t \mapsto t r(t)$ is bounded if and only if  $a = 0$ and
there is $h\in \Wip(\C_+)$ such that
$g(z) = z h(z)$. In this case $h$ is completely monotone.
\end{aufzi}
\end{thm}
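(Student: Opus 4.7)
My proof plan is organized around the key identity
\[ tr(t) = b + \frac{at}{2} + \int_0^t \mu(s, \infty)\,\ud{s} \qquad (t > 0), \]
obtained from \eqref{rates.e.r} by Fubini's theorem, which serves as the workhorse throughout.

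To prove (i) $\Rightarrow$ (ii), I would observe that the right derivative of $t \mapsto tr(t)$ equals $\frac{a}{2} + \mu(t, \infty)$, which is non-negative and non-increasing in $t$; hence $tr(t)$ is increasing and concave. Strict positivity of $tr(t)$ for all $t > 0$ is checked case by case: if $b > 0$ or $a > 0$ it is immediate, whereas if $\mu \neq 0$ one picks $\varepsilon > 0$ with $\mu(\varepsilon, \infty) > 0$ and uses monotonicity of $s \mapsto \mu(s, \infty)$ to bound the tail integral below by $\min(t, \varepsilon)\mu(\varepsilon, \infty) > 0$.

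For the converse (ii) $\Rightarrow$ (i), I would reconstruct the triple $(a, b, \mu)$ from the function $\varphi(t) := tr(t)$. Concavity and monotonicity of $\varphi$ ensure that the right derivative $\varphi'_+$ exists everywhere on $(0, \infty)$, is non-negative, non-increasing, and right-continuous. I set
\[ b := \lim_{t \searrow 0} \varphi(t) \in [0, \infty), \qquad \frac{a}{2} := \lim_{t \to \infty} \varphi'_+(t) \in [0, \infty), \]
and let $\mu$ be the unique positive Radon measure on $(0,\infty)$ whose tail function satisfies $\mu(t, \infty) = \varphi'_+(t) - \frac{a}{2}$. The integrability requirement $\int_{0+}^\infty \frac{s}{1+s}\, \mu(\ud{s}) < \infty$ is equivalent (again by Fubini) to $\int_0^1 \mu(s,\infty)\,\ud{s} < \infty$, which in turn follows from $\int_0^1 \varphi'_+(s)\,\ud{s} = \varphi(1) - b < \infty$. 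One then forms the Bernstein function $g \sim (a, b, \mu)$ via \eqref{hpfc.e.bf} and verifies $r[g] = r$ by running the chain of identities in reverse.

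Assertions (a) and (b) then follow from the two representations of $r$. Monotonicity is immediate from \eqref{rates.e.r} since $b/t$ and $\min(s/t, 1)$ are pointwise non-increasing in $t$; continuity follows from continuity of the concave function $\varphi$ on $(0, \infty)$; and $\lim_{t \to \infty} r(t) = a/2$ follows by dominated convergence, using that for $t \ge 1$ one has $\min(s/t, 1) \le \min(s, 1)$ with $\int_0^\infty \min(s, 1)\,\mu(\ud{s}) < \infty$. Assertion (b) follows from the identity $\int_0^t \mu(s, \infty)\,\ud{s} = \int_0^\infty \min(s, t)\,\mu(\ud{s})$ combined with dominated convergence as $t \searrow 0$.

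For (c), the identity $tr(t) = b + \frac{at}{2} + \int_0^\infty \min(s, t)\,\mu(\ud{s})$ together with monotone convergence shows that $tr(t)$ is bounded in $t$ if and only if $a = 0$ and $\int_0^\infty s\,\mu(\ud{s}) < \infty$. Under these two conditions Fubini yields
\[ g(z) = bz + \int_{0+}^\infty (1 - e^{-sz})\,\mu(\ud{s}) = z \left( b + \int_0^\infty \mu(s, \infty)\, e^{-sz}\,\ud{s} \right) = z h(z), \]
where $h$ is the Laplace transform of the positive bounded measure $b\delta_0 + \mu(s, \infty)\,\ud{s}$; in particular $h \in \Wip(\C_+)$ is completely monotone. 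Conversely, if $g(z) = zh(z)$ with $h \in \Wip(\C_+)$, then $a = g(0+) = 0$, and comparing the Laplace representation of $g(z)/z$ that one extracts from $h \in \Wip(\C_+)$ with the one produced above forces $\int_0^\infty s\,\mu(\ud{s}) < \infty$, so $tr(t)$ is bounded.

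The main obstacle is the reconstruction step in (ii) $\Rightarrow$ (i): one must harness the regularity of concave functions to produce a Radon measure with the prescribed tail and verify the integrability condition for the Bernstein representation. Everything else amounts to bookkeeping via Fubini and standard convergence theorems.
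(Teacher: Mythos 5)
Your proposal is correct and follows essentially the same route as the paper: the identity $tr(t)=b+\tfrac{a}{2}t+\int_0^t\mu(s,\infty)\,\ud{s}$, the reconstruction of $(a,b,\mu)$ from the right derivative of the concave function $t\mapsto tr(t)$, and the Fubini/Laplace-transform argument for part c) all match the published proof. The only difference is cosmetic: you spell out the strict positivity in (i)$\Rightarrow$(ii) and the converse direction of c) in slightly more detail than the paper does.
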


\begin{proof}
(i) implies (ii):\ Let $g \sim (a,b,\mu)$. Then
\[ r(t) = a/2 + b/t + \int_{0+}^\infty \min(s/t,1)\, \mu(\ud{s})
\]
is decreasing on $(0,\infty)$, continuous, and satisfies
$\lim_{t \to \infty} r(t) = a/2$ by the monotone convergence theorem.
Furthermore, the function
\[ f(t) := tr(t) = ta/2 + b + \int_0^t \mu(r,\infty)\, \ud{r}
\]
is increasing on $(0,\infty)$ and  satisfies $\lim_{t\searrow 0}f(t)  =
b$. Since the last summand is an integral of a decreasing
positive function, it is concave, and hence so is $f$. Note that
$f$ has one-sided derivatives
\[ D_+f(t) = \frac{a}{2}+ \mu(t,\infty)\quad \text{and}\quad
 D_-f(t) = \frac{a}{2} + \mu[t, \infty) \qquad (t > 0).
\]
(ii) implies (i):\ Define $f(t) := t r(t)$. Since $f$ is concave,
$f$ is absolutely continuous, has a right derivative $D_+f(t)$ at
each $t > 0$, and the function $D_+f$ is decreasing and right
continuous. Since $f$ is increasing, $D_+f(t) \ge 0$ for all
$t
> 0$. Define $a := 2 \lim_{t \to \infty} D_+f(t)$. Then the
function $m(t) := D_+f(t) - (a/2)$ is positive, right continuous
and decreases to $0$. By standard measure theory there exists a
positive Radon measure $\mu$ on $(0,\infty)$ satisfying $m(t) =
\mu(t,\infty)$ for all $t > 0$.

Since $f$ is absolutely continuous, we have
\[ f(t) - f(s) = \int_s^t  D_+f(r)\, \ud{r} =
\frac{a}{2}(t-s) + \int_s^t \mu(r, \infty)\, \ud{r}
\]
for all $0 < s < t < \infty$. Letting $s \to 0+$ here we obtain
\[ f(t) = b + \frac{a}{2}t + \int_0^t \mu(r,\infty)\, \ud{r}
\]
for any $t > 0,$ where $b:= \lim_{t \searrow 0} f(t).$ In
particular $\int_0^1 \mu(r, \infty)\, \ud{r} < \infty$, which
implies that
\[ \int_{0+}^\infty \min(1,s)\, \mu(\ud{s}) < \infty.
\]
Hence $r(t) = f(t)/t = r[g](t)$ for the Bernstein function $g \sim (a,b, \mu)$.

\smallskip

It remains to show c).  Clearly  $t r(t)$ is bounded on
$(0,\infty)$ if and only if $a= 0$ and
\begin{equation}\label{rates.e.bound2}
 \int_0^\infty \mu(r,\infty)\, \ud{r} = \int_{0+}^\infty s \, \mu(\ud{s}) < \infty.
\end{equation}
On the other hand, by Fubini's theorem,
\[ g(z) = bz + \int_{0+}^\infty (1 - e^{-rz})\, \mu(\ud{s})
= bz + z \int_0^\infty \mu(r, \infty) e^{-rz}\, \ud{r}
= zh(z),
\]
where $h$ is the Laplace transform of the positive measure
$\nu (\ud{r}) :=  b \delta_0 (\ud{r}) + \mu(r, \infty)\, \ud{r}$. The measure $\nu$ is finite
if and only if \eqref{rates.e.bound2} holds. The claimed equivalence now follows from
the injectivity of the Laplace transform.
\end{proof}

\begin{remark}
See Theorem \ref{t.cbf-rate} below for a related result on
rate functions associated with {\em complete} Bernstein functions.
\end{remark}

We now  employ the functional calculus. Let $-A$ be the generator of
a bounded semigroup $(T(s))_{s\ge 0}$, and let $M := \sup_{s\ge 0} \norm{T(s)}$.
If $g$ is a Bernstein function then, by Theorem \ref{rates.t.rate} and \eqref{hpfc.e.bound}, we have
\begin{equation}\label{rates.e.estimate}
 \norm{g(A)\Ce_t(A)} \le 2M r(t) \qquad (t > 0).
\end{equation}
Hence by \eqref{hpfc.e.prod} for $y = g(A)x \in \ran(g(A)),$
\[ \norm{\Ce_t(A)y}=\norm{g(A)\Ce_t(A)x}\le 2 M r(t)\norm{x} \qquad (t > 0).
\]
By the monotone convergence theorem, $\lim_{t\to \infty} 2r(t) =
g(0+)$. Hence, if $g(0+) > 0$  then nothing is
gained. This is no surprise since we have seen above that in this
case $\ran(g(A)) = X$, and there is no general convergence rate on
the whole space. However, in the case $g(0+)=0$ we obtain a
convergence rate to zero. Let us summarize our considerations.

\begin{thm}\label{rates.t.rate-indiv}
Let $g\sim (a,b,\mu)$ be a Bernstein function, and let $r = r[g]$
be as in \eqref{rates.e.r}. Let $-A$ be the generator of a $C_0$-bounded
semigroup $(T(s))_{s\ge 0}$ on a Banach space $X$ with $M :=
\sup_{s\ge 0} \norm{T(s)}$. Then the following statements hold.
\begin{aufzi}
\item  For each $y=g(A)x$
\begin{equation}\label{rates.e.rate-indiv}
 \norm{\Ce_t(A)y} \le 2 M r(t) \norm{x} \qquad (t > 0).
\end{equation}
\item  If
$tr(t) \to \infty$ as $t \to \infty$, $g(0+) = 0$ and
$(T(s))_{s\ge 0}$ is mean ergodic, then
\[ \norm{\Ce_t(A)y} = \rmo(r(t)) \qquad \text{as $t\to \infty$}
\]
whenever $y\in \ran(g(A))$.
\item If $tr(t)=
\rmO(1)$ as $t \to \infty,$ then $\ran(g(A)) \subseteq \ran(A)$
and
\[ \norm{\Ce_t(A)y} = \rmO({t}^{-1}) \qquad \text{as $t\to
\infty$}
\]
whenever $y\in \ran(g(A))$.
\end{aufzi}
\end{thm}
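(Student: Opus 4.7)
The plan is to treat (a) and (c) quickly using already-established facts, and to give (b) a separate density/uniform-boundedness argument built on (a). For (a), I would combine the product rule \eqref{hpfc.e.prod} (which yields $C_t(A) g(A) x = (gC_t)(A) x$ for $x\in\dom(g(A))$) with Theorem \ref{rates.t.rate} and the HP bound \eqref{hpfc.e.bound} to get $\|(gC_t)(A)\|\le 2Mr(t)$, and then apply this to $x$. For (c), Theorem \ref{rates.t.char}(c) gives $g(z) = zh(z)$ with $h\in\Wip(\C_+)$ under the hypothesis; the product rule applied to $z$ (regularizable) and $h\in\Wip(\C_+)$ yields $Ah(A) = (zh)(A) = g(A)$, so $\ran(g(A))\subseteq \ran(A)$, and Proposition \ref{int.p.rates}(b) supplies the $O(1/t)$-bound.

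The heart of the theorem is (b). Here the plan is to introduce the family $\Phi_t := r(t)^{-1}(gC_t)(A) \in \Lin(X)$, which is uniformly bounded with $\|\Phi_t\|\le 2M$ by (a), and to show that $\Phi_t \to 0$ strongly on $X$; since $\Phi_t x = r(t)^{-1} C_t(A) y$ whenever $y=g(A)x$, this gives the claim. By the uniform boundedness principle, it suffices to establish strong convergence to $0$ on the subspace $\ker(A)+\ran(A)$, which is dense in $X = \ker(A) \oplus \cls{\ran}(A)$ by mean ergodicity. On $\ker(A)$: $C_t(A)k = k$ and $g(A)k = ak = 0$ (Corollary \ref{hpfc.c.bf-fc} with $g(0+)=0$), so $\Phi_t k = 0$. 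On $\ran(A)$: for $v\in\dom(A)$, I would combine the classical identity $C_t(A)Av = (v-T(t)v)/t$ with the commutation $T(t)g(A) = g(A)T(t)$ on $\dom(g(A))$ (itself a product-rule application, since $e^{-tz}\in\Wip(\C_+)$) to derive
\[
(gC_t)(A) Av = g(A)(v-T(t)v)/t = (I-T(t))g(A)v/t,
\]
whence $\|\Phi_t(Av)\|\le 2M\|g(A)v\|/(tr(t)) \to 0$ by the hypothesis $tr(t)\to\infty$.

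The main technical care will be in rigorously deriving the displayed identity within the extended HP-calculus and keeping track of domains via the inclusion $\dom(A)\subseteq\dom(g(A))$ from Corollary \ref{hpfc.c.bf-fc}; the remainder is a standard density/uniform-boundedness argument. The hypothesis $tr(t)\to\infty$ enters precisely at the last step, where it upgrades the $O(1/t)$-decay valid on $\ran(A)$ to the $o(r(t))$-decay claimed on all of $\ran(g(A))$.
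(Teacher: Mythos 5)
Your proposal is correct and follows essentially the same route as the paper: part (a) via Theorem \ref{rates.t.rate}, the HP bound and the product rule; part (b) via the uniformly bounded family $r(t)^{-1}(g\Ce_t)(A)$, shown to vanish strongly on the dense subspace $\ker(A)\oplus\ran(A)$ using $g(A)=0$ on $\ker(A)$ and the identity $t A\Ce_t(A)=I-T(t)$ on $\ran(A)$ together with $tr(t)\to\infty$; and part (c) via Theorem \ref{rates.t.char}(c). The paper's proof of (b) is the same argument with $[tz\Ce_t](A)$ bounded by its $\Wip$-norm rather than by $\|I-T(t)\|$, which is only a cosmetic difference.
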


\begin{proof}
The estimate \eqref{rates.e.rate-indiv}
was obtained above.
To prove b), suppose that $a = g(0+) = 0$,
$t r(t) \to \infty$ as $t\to \infty$, and that the
semigroup $(T(s))_{s\ge 0}$ is mean ergodic.
By \eqref{rates.e.estimate}, the family of operators
\[ S_t := r(t)^{-1} g(A)\Ce_t(A),\qquad t > 0,
\]
is uniformly bounded. If $Ax = 0$ then $g(A) x = ax = 0$
(Corollary \ref{hpfc.c.bf-fc})
and hence $S_t x = 0$ for $t > 0$. On the other hand,
if  $x\in \dom(A)\subseteq \dom(g(A))$ then $y = Ax \in \ran(A)$ and using \eqref{hpfc.e.prod} we obtain
\[ S_t y = r(t)^{-1} g(A)\Ce_t(A)Ax = \frac{1}{tr(t)} [tA\Ce_t(A)]g(A)x
= \frac{1}{tr(t)}[tz\Ce_t(z)](A) g(A)x.
\]
Taking norms we infer that
\[ \norm{S_t y} \le \frac{M \norm{tz\Ce_t}_{\Wip} }{tr(t)} \norm{g(A)x}
\to 0   \qquad \text{as $t \to \infty$}
\]
by \eqref{rates.e.bound1}.  Since the semigroup is mean ergodic,
$\ker(A)\oplus \ran(A)$ is dense in $X$, and
hence $S_t \to 0$ strongly on $X$. It remains to note that for $y=g(A)x$
one has $S_t y = r(t)^{-1} \Ce_t (A) g(A) x$ as above.

For the proof of c)  suppose now  that $t r(t)$ stays bounded as
$t \to \infty$. Then by c) of Theorem \ref{rates.t.char} we have
$g(z) = zh(z)$ for some $h\in \Wip(\C_+)$, and hence $g(A) = A
h(A)$ by the functional calculus (see \eqref{hpfc.e.prod}). This
implies that $\ran(g(A)) \subseteq \ran(A),$ and then $y=Ax$ for
some $x \in \dom (A).$ Thus $\Ce_t(A)y=t^{-1}(x-T(t)x), t>0,$ so
that $\norm{\Ce_t(A)y}=\rmO(t^{-1})$ as $t \to \infty$ (cf.
Proposition \ref{int.p.rates}, b)).
\end{proof}

As a direct consequence of
Theorems \ref{rates.t.char}
and \ref{rates.t.rate-indiv} we state the following corollary.

\begin{cor}\label{rates.c.rate-indivc}
Let $r : (0, \infty) \to (0, \infty)$ be such that $t\mapsto
tr(t)$ is strictly positive, increasing and concave. Then
there is a Bernstein function $g$ such that $r= r[g]$, and hence
the conclusion of Theorem \ref{rates.t.rate-indiv} holds.
\end{cor}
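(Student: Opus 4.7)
The plan is to observe that this corollary is an immediate synthesis of Theorem \ref{rates.t.char} and Theorem \ref{rates.t.rate-indiv}, with essentially no new work required. The hypothesis on $r$ is literally condition (ii) of Theorem \ref{rates.t.char}, so the implication (ii) $\Rightarrow$ (i) of that theorem produces a (non-zero) Bernstein function $g \sim (a,b,\mu)$ with $r = r[g]$. Non-triviality of $g$ is automatic from the standing assumption that $tr(t) > 0$ on all of $(0,\infty)$: one of $b = \lim_{t\searrow 0} tr(t)$, $a/2 = \lim_{t\to\infty} r(t)$, or the mass of $\mu$ must be positive, since otherwise $tr(t) \equiv 0$.

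With such a $g$ in hand, Theorem \ref{rates.t.rate-indiv} applies verbatim. Part (a) supplies the uniform individual estimate $\norm{\Ce_t(A)y} \le 2Mr(t)\norm{x}$ for every $y = g(A)x$; part (b) yields the improvement $\norm{\Ce_t(A)y} = \rmo(r(t))$ whenever $tr(t) \to \infty$, $g(0+) = 0$, and the semigroup is mean ergodic; and part (c) gives the conclusion $\norm{\Ce_t(A)y} = \rmO(1/t)$ together with $\ran(g(A)) \subseteq \ran(A)$ in the complementary case $tr(t) = \rmO(1)$. Thus the full conclusion of Theorem \ref{rates.t.rate-indiv} transfers directly to the given $r$, which is all that the corollary asserts.

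If there is any genuine obstacle, it is located not in the corollary but in the preparatory implication (ii) $\Rightarrow$ (i) of Theorem \ref{rates.t.char} on which the present plan rests. There one must reconstruct the triple $(a,b,\mu)$ purely from the concave function $f(t) := tr(t)$: concavity forces $f$ to be absolutely continuous with a right-continuous, decreasing right-derivative $D_+f$, and one reads off $a := 2\lim_{t\to\infty} D_+f(t)$, $b := \lim_{t\searrow 0} f(t)$, and the unique Radon measure $\mu$ on $(0,\infty)$ satisfying $\mu(t,\infty) = D_+f(t) - a/2$. The L\'evy-type integrability $\int_{0+}^\infty \min(1,s)\,\mu(\ud{s}) < \infty$ drops out from $\int_0^1 \mu(r,\infty)\,\ud{r} \le f(1) < \infty$. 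For the corollary itself, however, no such analysis is required; the proof is essentially a one-line invocation of the two theorems already at our disposal.
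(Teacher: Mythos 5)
Your proposal is correct and matches the paper exactly: the paper offers no separate proof, stating the corollary as a direct consequence of Theorem \ref{rates.t.char} (implication (ii) $\Rightarrow$ (i)) combined with Theorem \ref{rates.t.rate-indiv}, which is precisely your one-line invocation. Your supplementary remarks on the non-triviality of $g$ and the reconstruction of $(a,b,\mu)$ correctly recapitulate the paper's own proof of Theorem \ref{rates.t.char}.
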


Thus, in particular, any function $r$ subject to the condition (ii) of Theorem \ref{rates.t.rate-indiv}
 can be realized
as a rate of decay of $\Ce_t(A)$ restricted to the range of $g(A).$

\section{Estimating Rates in Terms of Laplace Transforms}\label{s.rea}

So far, the rate $r$ is given in terms of the measure $\mu$ from
the representation \eqref{hpfc.e.bf} of the Bernstein function $g$.
However, in
situations of interest we often only know the measure $\nu$
corresponding to a potential $f = 1/g$, and so it
seems desirable to be able to read off $r$ (or at least its
asymptotic behaviour) from the {\em values} of $f$ at certain
points.

To achieve this, we begin with  some elementary considerations involving the
simple inequalities
\begin{align*}
x e^{-x} & \le \min(2,x,1-e^{-x})  \quad\,\,(x > 0),\\
\abs{1 - e^{-zs}} & \le \min(2, s \abs{z})  \qquad\qquad
(s, \re z \ge 0).
\end{align*}

\begin{lemma}\label{rea.l.bf-est}
Let $g \sim(a,b,\mu)$ be  a Bernstein function, and $r := r[g]$. Then
\[ \left[(t \re z) e^{-t \re z}\right]\,\, r(t) \le \re g(z)\le \abs{g(z)} \le
\max(2,t \abs{z})\,\, r(t)
\]
for all $t > 0$ and $z\in \C$ with $\re z > 0$.
\end{lemma}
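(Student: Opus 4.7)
The proof plan splits naturally into the upper and lower bounds, and in both cases the strategy is to produce the desired inequality term-by-term against the defining integral representations of $g$ and $r$.

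For the upper bound, the middle inequality $\re g(z)\le|g(z)|$ is obvious, so the work is in bounding $|g(z)|$. Starting from $g(z) = a + bz + \int_{0+}^{\infty}(1-e^{-sz})\,\mu(\mathrm{d}s)$ and the triangle inequality, I would apply the hint $|1-e^{-sz}|\le \min(2,s|z|)$ (valid when $\re z\ge 0$, since $|e^{-sz}|\le 1$ and $|1-e^{w}|\le |w|$ whenever $\re w\le 0$). This gives $|g(z)|\le a + b|z| + \int \min(2,s|z|)\,\mu(\mathrm{d}s)$. Then I claim the pointwise inequality
\[
\min(2,s|z|)\ \le\ \max(2,t|z|)\,\min(s/t,1)
\]
for all $s,t>0$, which I would verify by splitting into the four cases $s\lessgtr t$ and $t|z|\lessgtr 2$; in each case both sides reduce to one of $s|z|$, $2$, $2s/t$, $t|z|$ and the comparison is immediate. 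The constant and linear terms are handled similarly: $a\le(a/2)\max(2,t|z|)$ since $\max\ge 2$, and $b|z|\le(b/t)\max(2,t|z|)$ since $\max\ge t|z|$. Adding and integrating yields $|g(z)|\le \max(2,t|z|)\,r(t)$.

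For the lower bound, write $x:=\re z>0$. The first step is to reduce to the real case via
\[
\re g(z)\ =\ a + bx + \int_{0+}^{\infty}\bigl(1 - e^{-sx}\cos(s\,\im z)\bigr)\,\mu(\mathrm{d}s)\ \ge\ g(x),
\]
which follows pointwise from $\cos\le 1$. It then suffices to prove the real inequality $txe^{-tx}\,r(t)\le g(x)$. Expanding both sides using the representations, I would check term by term: the constant contribution needs $(a/2)(tx)e^{-tx}\le a$, i.e.\ $(tx)e^{-tx}\le 2$, from the hint; the drift contribution needs $bxe^{-tx}\le bx$, which is trivial. For the integral contribution the key pointwise claim is
\[
txe^{-tx}\,\min(s/t,1)\ \le\ 1-e^{-sx}\qquad(s,t,x>0),
\]
which I would verify by two cases. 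If $s\le t$, then the left side equals $sxe^{-tx}\le sxe^{-sx}\le 1-e^{-sx}$ by the hint $ue^{-u}\le 1-e^{-u}$ applied at $u=sx$. If $s>t$, then the left side equals $txe^{-tx}\le 1-e^{-tx}\le 1-e^{-sx}$, again by the hint (this time at $u=tx$) and monotonicity of $1-e^{-\cdot}$. Integrating against $\mu$ and combining with the constant and drift terms completes the proof.

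The argument is purely elementary; no serious obstacle is anticipated beyond a clean bookkeeping of the case analyses and remembering to exploit $\re z\ge 0$ exactly where needed (for $|e^{-sz}|\le 1$ in the upper estimate, and implicitly in writing $x=\re z>0$ for the lower estimate).
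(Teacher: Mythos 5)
Your proposal is correct and follows essentially the same route as the paper: both arguments estimate the $a$, $b$, and integral contributions separately, split the integral at $s=t$ (you phrase this as the pointwise inequalities $\min(2,s\abs{z})\le\max(2,t\abs{z})\min(s/t,1)$ and $txe^{-tx}\min(s/t,1)\le 1-e^{-sx}$), and rely on the same elementary bounds $\abs{1-e^{-sz}}\le\min(2,s\abs{z})$ and $ue^{-u}\le\min(2,1-e^{-u})$. Your explicit preliminary reduction $\re g(z)\ge g(\re z)$ is just a repackaging of the paper's step $1-\re e^{-sz}\ge 1-e^{-s\re z}$, so there is no substantive difference.
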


\begin{proof}
We have
\begin{align*}
\abs{g(z)} & \le a + b \abs{z} + \int_{0+}^\infty \abs{1- e^{-sz}}\, \mu(\ud{s})
\\ & \le
\frac{a}{2} 2  + b \abs{z} + \int_{0+}^t s \abs{z}\, \mu(\ud{s})
+ \int_{t+}^\infty 2 \, \mu(\ud{s})
\le \max(2, t \abs{z})\,\, r(t)
\end{align*}
for the upper estimate. For the lower estimate we write $x := t \re z > 0$. Then
\begin{align*}
\re g(z) & \ge  a + b \re z + \int_{0+}^\infty (1 - \re e^{-zs})\, \mu(\ud{s})
\\ &
\ge a + b \re z + \int_{0+}^\infty (1 - e^{-s \re z})\, \mu(\ud{s})
\\ &
\ge a + b \re z +  \int_{0+}^t (s \re z) e^{-s \re z}\, \mu(\ud{s})
+ (1 - e^{-t \re z}) \int_{t+}^\infty \, \mu(\ud{s})
\\ &
\ge 2 \frac{a}{2} + \frac{b}{t} x + xe^{-x} \int_{0+}^t s/t \, \mu(\ud{s})
+ (1 - e^{-x}) \mu(t, \infty)
\ge x e^{-x} r(t).
\end{align*}
\end{proof}

As a consequence we find that one can read off the (asymptotics of
the) rate $r(t)$ from values $g(z_t)$ if the set $(z_t)_{t > 0}$
is carefully chosen.

\begin{prop}\label{rea.p.comp}
Given $0 < \alpha \le \beta < \infty$ there are positive numbers
$c_0 = c_0(\alpha, \beta), c_1= c_1(\alpha, \beta)$ such
that the following holds.
Suppose that  $g$ is a Bernstein function with associated rate
function $r= r[g]$. Then
\[  c_0 \,r(t)\,\, \le\, \abs{g(z)}\, \le\,\, c_1\, r(t)
\]
whenever  $t> 0$ and  $\re z > 0$ are such that
$\alpha \le t \re z \le t \abs{z} \le \beta$.
\end{prop}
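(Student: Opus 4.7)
The plan is to read this off directly from Lemma \ref{rea.l.bf-est}, which already provides both an upper and a lower estimate of $|g(z)|$ in terms of $r(t)$; the proposition is just the observation that under the hypothesis $\alpha \le t\re z \le t|z| \le \beta$ both estimates tighten to genuine two-sided bounds with constants depending only on $\alpha,\beta$.

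For the upper bound, Lemma \ref{rea.l.bf-est} gives $|g(z)| \le \max(2, t|z|)\, r(t)$. Since $t|z| \le \beta$ we obtain $|g(z)| \le \max(2,\beta)\, r(t)$, so one may take $c_1(\alpha,\beta) := \max(2,\beta)$.

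For the lower bound, the same lemma gives $|g(z)| \ge \re g(z) \ge (t\re z) e^{-t\re z}\, r(t)$. The function $\varphi(x) := xe^{-x}$ is strictly positive on $(0,\infty)$, increasing on $[0,1]$ and decreasing on $[1,\infty)$, so its minimum over any compact subinterval of $(0,\infty)$ is attained at an endpoint. Since our hypothesis forces $t\re z \in [\alpha,\beta]$, we conclude $\varphi(t\re z) \ge \min\bigl(\alpha e^{-\alpha},\,\beta e^{-\beta}\bigr)$, so $c_0(\alpha,\beta) := \min(\alpha e^{-\alpha},\,\beta e^{-\beta})$ works.

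There is really no obstacle here: all the substantive content has been packed into Lemma \ref{rea.l.bf-est}, and the proposition is essentially a corollary obtained by tracking how the elementary factors $\max(2,t|z|)$ and $(t\re z)e^{-t\re z}$ behave on the parameter window described by $\alpha$ and $\beta$. The only mildly subtle point to mention explicitly is the non-monotonicity of $\varphi(x)=xe^{-x}$, which is why the lower constant takes the form of a minimum of two endpoint values rather than a single one.
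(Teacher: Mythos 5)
Your proof is correct and follows exactly the paper's route: both bounds are read off from Lemma \ref{rea.l.bf-est}, with $c_1=\max(2,\beta)$ and $c_0=\inf_{\alpha\le x\le\beta}xe^{-x}$, which (as you note via the unimodality of $xe^{-x}$) equals $\min(\alpha e^{-\alpha},\beta e^{-\beta})$. Nothing further is needed.
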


\begin{proof}
By Lemma \ref{rea.l.bf-est} we can choose
$ c_0(\alpha, \beta) = \inf_{\alpha\le x \le \beta} \left(xe^{-x}\right)
\quad \text{and}
\quad c_1(\alpha, \beta) = \max(2, \beta)$.
\end{proof}

For {\em special} Bernstein functions $g$ we can employ Theorem
\ref{hpfc.t.sbf-est}
and obtain a better result.

\begin{prop}\label{rea.p.sbf}
Given $0 < \alpha \le \beta < \infty$ there are positive numbers $c_0=c_0(\alpha, \beta), c_1=c_1
(\alpha, \beta)$ such that the following holds. If $g$ is a special Bernstein
function with associated rate function $r := r[g]$, then
\[
c_0\,  r(t) \,\, \le \,  \abs{g(z)}\,  \le \,\, c_1\, r(t)
\]
whenever $t > 0$ and $\re z \ge 0$ are such that $t \abs{z} \in
[\alpha, \beta]$. In particular, $c_0=1/(3e^2)$ if
$\alpha=\beta=1.$
\end{prop}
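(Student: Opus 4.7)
The plan is to deduce Proposition \ref{rea.p.sbf} by combining Theorem \ref{hpfc.t.sbf-est} with the sharper half of Proposition \ref{rea.p.comp} (equivalently, Lemma \ref{rea.l.bf-est}). The point of the statement is that for \emph{special} Bernstein functions the hypothesis need only control $t\abs{z}$ rather than both $t\re z$ and $t\abs{z}$ as in Proposition \ref{rea.p.comp}. The reason this is possible is Theorem \ref{hpfc.t.sbf-est}, which says $\abs{g(z)}$ and $g(\abs{z})$ agree up to the multiplicative constant $3e$ on the closed right half-plane; this lets us trade a general $z$ with $\re z \ge 0$ for the positive real point $w := \abs{z}$, where the sharper bounds of Lemma \ref{rea.l.bf-est} apply.

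Concretely, I would fix $0 < \alpha \le \beta < \infty$ and $t > 0$, $z$ with $\re z \ge 0$ and $t\abs{z} \in [\alpha, \beta]$. Setting $w = \abs{z} > 0$, Lemma \ref{rea.l.bf-est} (applied at the real point $w$, for which $\re w = \abs{w} = w$ and $tw \in [\alpha, \beta]$) gives
\[
  (tw)\,e^{-tw}\, r(t) \,\le\, g(w) \,\le\, \max(2, tw)\, r(t),
\]
so that with $c_0(\alpha,\beta) := \inf_{\alpha \le x \le \beta} x e^{-x}$ and $c_1(\alpha,\beta) := \max(2,\beta)$ one has
\[
  c_0(\alpha,\beta)\, r(t) \,\le\, g(\abs{z}) \,\le\, c_1(\alpha,\beta)\, r(t).
\]
Combining this with the two-sided estimate $\tfrac{1}{3e}\abs{g(z)} \le g(\abs{z}) \le 3e\abs{g(z)}$ from Theorem \ref{hpfc.t.sbf-est} then yields
\[
  \frac{c_0(\alpha,\beta)}{3e}\, r(t) \,\le\, \abs{g(z)} \,\le\, 3e\, c_1(\alpha,\beta)\, r(t),
\]
so the constants in the proposition may be taken as $c_0(\alpha,\beta)/(3e)$ and $3e\, c_1(\alpha,\beta)$.

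For the special case $\alpha = \beta = 1$ claimed in the last sentence: then $tw = 1$ in the lower estimate above, so the infimum is attained with value $xe^{-x}|_{x=1} = 1/e$, giving $g(\abs{z}) \ge r(t)/e$. Applying the lower half of Theorem \ref{hpfc.t.sbf-est} once more, $\abs{g(z)} \ge g(\abs{z})/(3e) \ge r(t)/(3e^2)$, which is precisely the asserted value. I do not expect any real obstacle here; the whole content is the interplay between the half-plane estimate of Theorem \ref{hpfc.t.sbf-est} (which is genuinely a result about special Bernstein functions) and the elementary real-variable bound of Lemma \ref{rea.l.bf-est}, so the argument is essentially a one-line composition of those two inequalities together with an explicit evaluation of the constants.
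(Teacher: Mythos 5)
Your proof is correct and follows essentially the same route as the paper: combine the half-plane comparison $\abs{g(z)}\sim g(\abs{z})$ of Theorem \ref{hpfc.t.sbf-est} with the real-variable estimates of Lemma \ref{rea.l.bf-est} and evaluate the constants. The only (harmless) difference is that the paper applies the upper bound of Lemma \ref{rea.l.bf-est} directly to $z$ (that bound depends only on $t\abs{z}$), obtaining $c_1=\max(2,\beta)$ without your extra factor $3e$; your lower constant, and in particular the value $1/(3e^2)$ for $\alpha=\beta=1$, agrees with the paper's.
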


\begin{proof}
Combining Theorem \ref{hpfc.t.sbf-est} with Lemma \ref{rea.l.bf-est} we obtain
\[  \left(\frac{t\abs{z}e^{-t\abs{z}}}{3e}\right) r(t) \le \abs{g(z)}
\le \max(2, t\abs{z})\,  r(t) \qquad (t > 0, \re z \ge 0).
\]
Then choose $c_0 = (3e)^{-1} \inf_{\alpha \le x \le \beta} \left (x e^{-x}\right)$ and
$c_1 = \max(2, \beta)$.
\end{proof}

For a potential function  $f$, we can take $e = 1/f$ in Theorem
\ref{hpfc.t.app-dom} and combining it with Theorem
\ref{rates.t.rate-indiv} and Proposition \ref{rea.p.comp} we
obtain the following statement.

\begin{thm}\label{psr.c.final}
Let  $\mu$ be a positive Radon measure on $\mathbb R_+$ and let
$f=\Lap\mu$ be a potential function (e.g., a Stieltjes
function) with $f(0+) = \infty$.  Let $-A$ be the generator of a bounded
$C_0$-semigroup $(T(s))_{s\ge 0}$ on a Banach space $X$,  and let $x, y\in
X$ be such that
\[ \lim_{\alpha \searrow 0} \int_0^\infty e^{-\alpha s} T(s)x\, \mu(\ud{s})
= y \qquad  \text{weakly}.
\]
Then
\begin{aufzi}
\item  $x = (1/f)(A)y$  \quad and \quad
$\displaystyle \norm{\Ce_t(A)x} ={\rm O}\left(\frac{1}{f(1/t)}\right)
\qquad\text{as $t \to \infty$}$.
\item
$\displaystyle \norm{\Ce_t(A)x} = \rmo\left(\frac{1}{f(1/t)}\right)
\quad\text{as $t \to \infty$}$ if in addition $t/f(1/t) \to \infty$ as $t \to \infty.$
\end{aufzi}
\end{thm}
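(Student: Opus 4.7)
The strategy is to view $g := 1/f$ as a Bernstein function and to use the extended Hille--Phillips calculus to turn the weak Abel limit hypothesis into the operator identity $x = g(A) y$. From there, the announced rates follow from the rate theorem of Section~\ref{s.rates} combined with the equivalence $r[g](t) \sim 1/f(1/t)$ supplied by Proposition~\ref{rea.p.comp}.

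For the first step, since $f \ne 0$ is a potential with $f(0+) = \infty$, the function $g := 1/f$ is a nonzero Bernstein function with $g(0+) = 0$, and Corollary~\ref{hpfc.c.bf-fc} ensures that $g(A)$ is defined by the extended HP-calculus. I would then apply Corollary~\ref{hpfc.c.app-dom} with this regularizable $g$: since $gf \equiv 1 \in \Wip(\C_+)$ (the Laplace transform of $\delta_0$) and the weak Abel convergence hypothesis is exactly what that corollary requires, one obtains $y \in \dom(g(A))$ and $g(A)y = (gf)(A)x = x$. This is the asserted operator identity in part (a).

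For the $\rmO$-estimate of (a), Theorem~\ref{rates.t.rate-indiv}(a) applied to $x = g(A) y$ gives $\norm{\Ce_t(A)x} \le 2M\, r[g](t)\, \norm{y}$, while Proposition~\ref{rea.p.comp} (with $\alpha = \beta = 1$ and $z = 1/t$) provides constants $c_0, c_1 > 0$ such that $c_0\, r[g](t) \le g(1/t) = 1/f(1/t) \le c_1\, r[g](t)$; hence $r[g](t) \sim 1/f(1/t)$, and (a) follows. For (b), the additional hypothesis $t/f(1/t) \to \infty$ translates, via the same equivalence, to $t\, r[g](t) \to \infty$, so the hypotheses of Theorem~\ref{rates.t.rate-indiv}(b) are in place (using also $g(0+) = 0$). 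Applying that result to $x = g(A) y \in \ran(g(A))$ then yields $\norm{\Ce_t(A)x} = \rmo(r[g](t)) = \rmo(1/f(1/t))$.

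The main obstacle is the identification in the first step: a priori it is not clear that the weak Abel limit $y$ lies in $\dom(g(A))$, let alone that $g(A)y$ equals $x$, since $g$ is typically unbounded and the integral in the hypothesis converges only in a weak Abel sense. The abstract regularization mechanism behind Corollary~\ref{hpfc.c.app-dom}, for which Lemma~\ref{hpfc.l.bf-fc} guarantees the existence of a regularizer $e \in \Wip(\C_+)$ with $eg \in \Wip(\C_+)$, is precisely the tool that resolves this subtlety and reduces the whole argument to manipulations of bounded Hille--Phillips symbols.
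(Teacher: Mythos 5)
Your route is the same as the paper's: set $g:=1/f$ (a Bernstein function with $g(0+)=1/f(0+)=0$ since $f$ is a potential), use Corollary~\ref{hpfc.c.app-dom} with $gf\equiv 1\in\Wip(\C_+)$ to get $x=g(A)y$, and then combine Theorem~\ref{rates.t.rate-indiv} with Proposition~\ref{rea.p.comp} (taking $z=1/t$, so $\alpha=\beta=1$) to convert $r[g](t)$ into $1/f(1/t)$. Part (a) of your argument is complete and correct.

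There is, however, a genuine gap in part (b). Theorem~\ref{rates.t.rate-indiv}(b) has \emph{three} hypotheses: $tr(t)\to\infty$, $g(0+)=0$, \emph{and} that the semigroup $(T(s))_{s\ge 0}$ is mean ergodic. You check the first two but silently drop the third, and mean ergodicity is not among the standing assumptions of the theorem you are proving (the Banach space $X$ is arbitrary, not reflexive). The mean-ergodicity hypothesis is not cosmetic: in the proof of Theorem~\ref{rates.t.rate-indiv}(b) it is exactly what guarantees that $\ker(A)\oplus\ran(A)$ is dense, so that the uniformly bounded family $S_t=r(t)^{-1}g(A)\Ce_t(A)$ converges strongly to $0$ on all of $X$. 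Without it the little-$\rmo$ conclusion does not follow. The paper closes this gap by a reduction: since $g(0+)=0$ one has $x=g(A)y\in Y:=\cls{\ran}(A)$ (use that $\dom(A)$ is a core for $g(A)$ and that $g(A)w\in\cls{\ran}(A)$ for $w\in\dom(A)$ by the representation \eqref{phillips} with $a=0$); moreover $Y$ is closed, hence weakly closed, and $T$-invariant, so the weak Abel limit $y$ also lies in $Y$. One may therefore restrict everything to $Y$, where the Ces\`aro means converge (to $0$) on all of $Y$, i.e.\ the restricted semigroup \emph{is} mean ergodic, and only then invoke Theorem~\ref{rates.t.rate-indiv}(b). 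You need to supply this reduction (or an equivalent substitute) for part (b) to be valid.
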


\begin{proof}
We let $g := 1/f$ and apply Corollary \ref{hpfc.c.app-dom}  to
conclude that $x = g(A)y$. Then  Theorem
\ref{rates.t.rate-indiv} and Proposition \ref{rea.p.comp} imply that
\[ \norm{\Ce_t(A)x}={\rm O}(g(1/t))= {\rm O}\left(\frac{1}{f(1/t)}\right)
\qquad\text{as $t \to \infty$}.
\]
Now suppose that $t/f(1/t) \to \infty$ as $t \to \infty$. In this case, by
Proposition \ref{rea.p.comp}, $t r(t) \to \infty$ as $t \to
\infty$. Furthermore, $x \in Y := \cls{\ran}(A)$ and $Y$ is
$(T(s))_{s\ge 0}$-invariant, so $y \in Y$ as well.
This means that
we can suppose without loss of generality that $(T(s))_{s \ge 0}$
is mean ergodic. Hence the second part of Theorem
\ref{rates.t.rate-indiv} yields that
\[ \norm{\Ce_t(A)x} = \rmo( r(t)) = \rmo(1/f(1/t)) \quad
\text{as $t \to \infty$},
\]
again by Proposition \ref{rea.p.comp}.
\end{proof}
\begin{rem}\label{hirsch}
Let $f$ be a Stieltjes function  with the representation (cf.
\eqref{hpfc.e.stieltjes})
\[
f(z)=\int_{0+}^{\infty} \frac{\mu(\ud{s})}{s+z}, \qquad (z >0),
\quad \text{with}\quad \int_{0+}^{\infty} \frac{\mu(\ud{s})}{s+1} <\infty.
\]
Then
\[
f(z)=\int_{0}^{\infty}e^{-z t}\int_{0+}^{\infty} e^{-t s} \,
\mu(\ud{s}) \, \ud{t} = \int_{0}^{\infty}e^{-z t} m(t) \, \ud{t},
\]
where $m$ is a completely monotone function such that
\[\int_{0}^{1} m(t) \,\ud{t} < \infty \quad \text{and} \quad \lim_{t \to \infty}
m(t)=0.
\]
Hirsch proved in \cite[Corollaire, p. 214-215]{HirFA} that {\em if
$\ran (A)$ is dense in $X$, the following statements are
equivalent for $x\in X$:
\begin{aufzii}
\item  weak  $\displaystyle{\lim_{\alpha \searrow 0}
\int_0^\infty e^{-\alpha s} T(s)x\,
m(s)\, \ud{s}}$ exists;
\item  $\displaystyle{\lim_{\alpha \searrow 0} \int_0^\infty e^{-\alpha s} T(s)x \, m(s)
\, \ud{s}}$ exists;
\item  weak $\displaystyle{\lim_{M \to \infty} \int_0^M  T(s)x \, m(s)\, \ud{s}}$
exists;
\item  $\displaystyle{\lim_{M \to \infty} \int_0^M  T(s)x \,
m(s)\ud{s}}$ exists.
\end{aufzii}
 Moreover, all limits in {\rm (i)-(iv)} are equal to each other.}

\smallskip
We note the following: since Stieltjes functions are potentials,
we can apply Theorem \ref{psr.c.final} to see that (i) implies $x
= g(A)y$, for $g = 1/f$, where $y$ is the limit in (i). On the
other hand, $A$ is injective (since $\ran(A)$ is dense) and $f(z)=
q(z)/z$ for some Bernstein function $q$. Hence the function $f$
belongs to the extended HP-calculus for $A$. General functional
calculus rules then yield that $x \in \dom(f(A))$ and $f(A)x = y$
\cite[Corollary~1.2.4]{Haa2006}. The point is now that one can
pass, conversely,  from $x\in \dom(f(A))$ and $f(A)x = y$ to (iv).
However, Hirsch's proof for this is based essentially on the
functional calculus for sectorial operators, and is beyond the
scope of the present article. The issue will be thoroughly
addressed in a subsequent paper.
\end{rem}

Propositions \ref{rea.p.comp} and  \ref{rea.p.sbf} are
not only useful to determine $r$ from values
of $g$, but also to see that under some weak spectral conditions
on $A$ the rate $r$ is indeed {\em optimal} on $\ran(g(A))$.
The following result illustrates what we mean by this.

\begin{thm}\label{rea.t.main}
Let $g$ be a special Bernstein function (e.g., a complete
Bernstein function) with associated rate function $r=r[g]$
such that
\[
t r(t)\to\infty\;\;\mbox{as}\;\;t\to\infty.
\]
Let $-A$ be the generator of a bounded $C_0-$semigroup $(T(s))_{s\geq 0}$
such that $z=0$ is an accumulation point of $\sigma(A)$. Then, whenever
$\epsilon: (0, \infty) \to (0, \infty)$ is a
decreasing function
with $\lim_{t\to \infty}\epsilon(t)= 0$,
there exists $y\in \ran(g(A))$ such that
\begin{equation}\label{supr}
\sup_{t\geq 1}\frac{\norm{\Ce_t(A)y}}{\epsilon(t)r(t)}=\infty.
\end{equation}
\end{thm}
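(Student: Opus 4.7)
\medskip

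\noindent\textbf{Proof plan for Theorem \ref{rea.t.main}.}

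The plan is to argue by a uniform boundedness principle applied to the operators $\Ce_t(A)g(A)$ on $\ran(g(A))$, reducing the problem to producing lower bounds on the operator norms $\norm{\Ce_t(A)g(A)}$ of the correct asymptotic order. For this the spectral inclusion Theorem~\ref{hpfc.t.spin} together with the sharp estimate for special Bernstein functions from Proposition~\ref{rea.p.sbf} will be used.

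First I would observe that if the conclusion failed, then for every $y\in \ran(g(A))$ one would have
\[
\sup_{t\ge 1}\frac{\norm{\Ce_t(A)y}}{\epsilon(t)r(t)} < \infty.
\]
Writing $y = g(A)x$ and noting that $\Ce_t\cdot g\in \Wip(\C_+)$ with $(\Ce_t g)(A) = \Ce_t(A) g(A)\in \Lin(X)$ by the product rule \eqref{hpfc.e.prod}, this says that for every $x\in X$ the family
\[
T_t x := \frac{1}{\epsilon(t)r(t)}\,\Ce_t(A)g(A)x,\qquad t\ge 1,
\]
is bounded in $X$. By the uniform boundedness principle, $\sup_{t\ge 1}\norm{T_t}<\infty$, so it is enough to exhibit a sequence $t_n\to\infty$ with $\norm{T_{t_n}}\to\infty$; then Banach--Steinhaus furnishes an $x\in X$ for which $\sup_n\norm{T_{t_n}x}=\infty$, and $y:=g(A)x\in\ran(g(A))$ realises \eqref{supr}.

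To produce the lower bound on $\norm{\Ce_{t_n}(A)g(A)}$, I would use that $0$ is an accumulation point of $\sigma(A)$ to pick a sequence $0\neq \lambda_n\in\sigma(A)$ with $\lambda_n\to 0$; since $(T(s))$ is bounded, $\re\lambda_n\ge 0$. Setting $t_n := 1/\abs{\lambda_n}$, we have $t_n\abs{\lambda_n}=1$, and $t_n\to\infty$. By the spectral inclusion theorem applied to $\Ce_{t_n} g\in \Wip(\C_+)$,
\[
\norm{\Ce_{t_n}(A)g(A)} \ge \bigl|\Ce_{t_n}(\lambda_n)\bigr|\cdot \bigl|g(\lambda_n)\bigr|.
\]
The factor $\abs{\Ce_{t_n}(\lambda_n)} = \abs{1-e^{-t_n\lambda_n}}$ is bounded below by a positive constant $c$ that is uniform in $n$, since $t_n\lambda_n$ lies on the unit circle in $\cls{\C}_+$ (a compact set on which $|1-e^{-z}|$ is strictly positive). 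For the factor $\abs{g(\lambda_n)}$, Proposition~\ref{rea.p.sbf} applied with $\alpha=\beta=1$ (using that $g$ is special Bernstein) gives $\abs{g(\lambda_n)}\ge (3e^2)^{-1}r(t_n)$. Combining,
\[
\norm{T_{t_n}} \ge \frac{c}{3e^2}\,\frac{1}{\epsilon(t_n)}\ \longrightarrow\ \infty,
\]
since $\epsilon$ is decreasing with $\epsilon(t)\to 0$ as $t\to\infty$.

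The main obstacle, and the reason the theorem requires $g$ to be \emph{special} (not merely Bernstein), is obtaining the two-sided lower bound $\abs{g(\lambda_n)}\gtrsim r(t_n)$ when $\lambda_n$ is allowed to approach $0$ along arbitrary directions in $\cls{\C}_+$. For a general Bernstein function Lemma~\ref{rea.l.bf-est} only gives the weaker bound $\re g(z)\gtrsim (t\re z) e^{-t\re z}r(t)$, which degenerates when $\lambda_n$ is (nearly) purely imaginary. It is precisely Theorem~\ref{hpfc.t.sbf-est}, which says $\abs{g(z)}\sim g(\abs{z})$ for special Bernstein $g$, that allows Proposition~\ref{rea.p.sbf} to deliver a uniform lower bound on $\abs{g(\lambda_n)}$ regardless of $\arg\lambda_n$, thereby making the spectral argument work.
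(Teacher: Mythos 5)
Your spectral lower bound is correct and is exactly the paper's: pick $0\ne z_n\in\sigma(A)$ with $z_n\to 0$, set $t_n=1/\abs{z_n}$, use the spectral inclusion theorem on $\Ce_{t_n}g\in\Wip(\C_+)$ together with Proposition~\ref{rea.p.sbf} (this is indeed where specialness of $g$ enters), and conclude $\norm{(\Ce_{t_n}g)(A)}\ge \frac{\delta}{3e^2}\,r(t_n)$ with $\delta=\inf_n|1-e^{-e^{i\theta_n}}|>0$.

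The gap is in the Banach--Steinhaus step. The failure of the conclusion gives $\sup_{t\ge1}\norm{\Ce_t(A)y}/(\epsilon(t)r(t))<\infty$ for $y=g(A)x$, i.e.\ pointwise boundedness of $T_t=(\Ce_t g)(A)/(\epsilon(t)r(t))$ only for $x\in\dom(g(A))$ --- not, as you assert, for every $x\in X$. Since $g(A)$ is in general \emph{unbounded} (this is the central difficulty of the continuous case, as opposed to the discrete case of \cite{GoHaTo11}), $\dom(g(A))$ is a dense but meager subspace of $X$, and the uniform boundedness principle does not apply to pointwise bounds on a meager set. Equivalently, running the argument forwards: Banach--Steinhaus on $X$ produces some $x\in X$ with $\sup_n\norm{(\Ce_{t_n}g)(A)x}/(\epsilon(t_n)r(t_n))=\infty$, but this $x$ need not lie in $\dom(g(A))$, so you cannot set $y=g(A)x$ and you do not obtain a vector of $\ran(g(A))$.

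The paper circumvents this as follows. It first normalizes $\epsilon$ by replacing it with $\max(\epsilon(t),[tr(t)]^{-1})$, so that $\beta:=\inf_n\epsilon(t_n)t_nr(t_n)>0$ (harmless since $tr(t)\to\infty$). It then observes that $g(A)\Ce_{t_n}(A)$ commutes with $(I+A)^{-1}$, hence its restriction to the Banach space $\dom(A)$ (with graph norm) is similar to the operator on $X$ and has the \emph{same} norm; the uniform boundedness principle applied on $\dom(A)$ therefore yields $x\in\dom(A)\subseteq\dom(g(A))$ with $\sup_n\norm{g(A)\Ce_{t_n}(A)x}_{\dom(A)}/(\epsilon(t_n)r(t_n))=\infty$. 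Finally, with $y:=g(A)x\in\ran(g(A))$, the graph norm is estimated by
\[
\norm{g(A)\Ce_{t_n}(A)x}_{\dom(A)}\le\norm{\Ce_{t_n}(A)y}+\tfrac{M+1}{t_n}\norm{g(A)x}\le\norm{\Ce_{t_n}(A)y}+\tfrac{(M+1)\epsilon(t_n)r(t_n)}{\beta}\norm{g(A)x},
\]
and the second term is absorbed thanks to the normalization $\beta>0$, giving \eqref{supr}. Your proposal is missing this entire mechanism; as written it would only be valid if $g(A)$ were a bounded operator.
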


\begin{proof}
By hypothesis we find $0 \not= z_n=\abs{z_n}e^{i\theta_n}\in \sigma(A), n \in \mathbb N,$ with
$\theta_n\in [-\pi/2,\pi/2]$ and  $z_n\to 0$ as $n\to\infty$. Then
\[
t_n:=1/\abs{z_n}\to\infty\qquad (n\to\infty).
\]
Since $tr(t)\to\infty$, $t\to\infty$,
we may replace
$\epsilon(t)$ by
$\max(\epsilon(t),[tr(t)]^{-1})$, $t\geq 1$
and suppose without loss of generality that
\[
\beta:=\inf_{n\in \N}\,\epsilon(t_n)t_n r(t_n)>0.
\]
Furthermore, $\delta := \inf_{n \in \N} \big|1 - e^{-e^{i\theta_n}}\big| > 0$,
since the function $\theta \mapsto |1 - e^{-e^{i\theta}}|$
is continuous and does not have a zero in $[-\pi/2, \pi/2]$.

By the spectral inclusion Theorem \ref{hpfc.t.spin}, Theorem
\ref{hpfc.t.sbf-est} and Proposition  \ref{rea.p.sbf} we obtain
\begin{align*}
\norm{g(A)\Ce_{t_n}(A)} & = \norm{(\Ce_{t_n} \cdot g)(A)}
\geq \sup_{\lambda\in \sigma(A)}\,
\abs{(\Ce_{t_n}\cdot g)(\lambda)}
\geq
\abs{ \Ce_{t_n}(z_n)g(z_n)}
\\ & =\frac{\big|1-e^{-t_nz_n}\big|}{t_n\abs{z_n}} \abs{g(z_n)}
=  \big|1-e^{-e^{i\theta_n}}\big| \cdot
\abs{g(z_n)} \geq \frac{\delta}{3e^2} r(t_n)
\end{align*}
for each $n\in \N$.
Consequently, since $\epsilon(t_n)\to 0$ as $n\to\infty$, one has
\begin{align*}
\sup_{n\in \N}\,\frac{\norm{g(A)\Ce_{t_n}(A)}}{\epsilon(t_n)r(t_n)}=\infty.
\end{align*}
Each operator $g(A)\Ce_{t_n}(A)$ is similar to its restriction to
$\dom(A)$ by means of the isomorphism $(I+A)^{-1}:X \to \dom(A).$
By the uniform boundedness principle there is $x\in
\dom(A)\subseteq \dom(g(A))$ such that
\[
\sup_{n\in \N}\,\frac{\norm{g(A)\Ce_{t_n}(A)x}_{\dom(A)}}{\epsilon(t_n)r(t_n)}
=\infty.
\]
On the other hand, setting $y:=g(A)x$
we obtain
\begin{align*}
\norm{g(A)\Ce_{t_n}(A)x}_{\dom(A)} & =
\norm{g(A)\Ce_{t_n}(A)x} +\norm{A\Ce_{t_n}(A)g(A)x}
\\ & \le
\norm{\Ce_{t_n}(A)y} +
\frac{M+1}{t_n}\norm{g(A)x}
\\ & \leq
\norm{\Ce_{t_n}(A)y} +
\frac{(M+1)\epsilon(t_n)r(t_n)}{\beta}\norm{g(A)x},
\end{align*}
where as always $M:=\sup_{s\geq 0}\,\norm{T(s)}$.
It follows that
\[
\sup_{n\in\N}\,\frac{\norm{\Ce_{t_n}(A)y}}{\epsilon(t_n)r(t_n)}=\infty,
\]
and this concludes the proof.
\end{proof}

\begin{remark}\label{remarkg}
Clearly, \eqref{supr} can be rewritten as
$$
\sup_{t\geq 1}\frac{\norm{\Ce_t(A)y}}{\epsilon(t)g(1/t)}=\infty.
$$
\end{remark}

\begin{remark}\label{remarkonrates}
For general Bernstein functions $g$ with rate $r= r[g]$ satisfying
$t r(t) \to \infty$ as $t \to \infty$ the conclusion of Theorem
\ref{rea.t.main}
remains true if one requires the stronger spectral condition that
{\em $z=0$ is an accumulation point of
\[ \sigma(A) \cap \{ z \in \C_{+} \suchthat \abs{\arg(z)} \le \theta\}
\]
for some angle $\theta \in [0, \pi/2)$.} (The proof is similar to
that of Theorem \ref{rea.t.main}, but employs Proposition
\ref{rea.p.comp} instead of Proposition  \ref{rea.p.sbf}.) This
applies in particular to semigroups of non-invertible isometries,
since then the whole halfplane $\overline{\C}_+$ is contained in
$\sigma(A)$. A fortiori, it applies also when one has a closed
invariant subspace where the semigroup is like that.
\end{remark}

\section{Examples}\label{s.exas}

We now discuss several examples important for applications. Let in
this section $-A$ be the generator of a bounded $C_0$-semigroup
$(T(s))_{s \ge 0}$ on a Banach space $X.$
For a general theory of fractional powers and
logarithms of $A$ we refer to \cite[Chapter 3]{Haa2006}.

\subsection*{Fractional Powers and Polynomial Rates}

For $0 < \alpha < 1$ the function $g(z) := z^\alpha$ is a
complete Bernstein function with representation
\[ z^\alpha = \int_0^\infty (1 - e^{-zs})\, \mu(\ud{s}),\qquad
\mu(\ud{s}) = \frac{\alpha}{\Gamma(1- \alpha)} s^{-(\alpha + 1)}
\, \ud{s},
\]
where $\Gamma(\cdot)$ is the Gamma function, see \cite[p.
219]{SchilSonVon2010}. The operator $g(A) = (z^\alpha)(A)$ equals
the commonly used fractional power $A^\alpha$ of $A$, see
\cite[Chapter~3]{Haa2006}. The associated potential is
\[ z^{-\alpha} = \frac{1}{\Gamma(\alpha)} \int_0^\infty t^{\alpha -1}
e^{-tz}\, \ud{t}.
\]
By Proposition  \ref{rea.p.comp}, the associated rate satisfies
$r(t) \sim (1/t)^\alpha = t^{-\alpha}$ (take $z_t = 1/t$). Theorems
\ref{rates.t.rate-indiv} and \ref{rea.t.main} then yield the following result.

\begin{thm}\label{exas.t.pol}
Let $-A$ be the generator of a bounded $C_0$-semigroup on a Banach
space $X$ and let $\alpha \in (0,1).$
\begin{aufzi}
\item  For each $x\in \ran(A^\alpha)$
\[ \norm{\Ce_t(A)x} = \rmO(t^{-\alpha}) \qquad
\text{as $t \to \infty$.}
\]
\item  If the semigroup is mean-ergodic, then for each $x\in \ran(A^\alpha)$
\[
\norm{\Ce_t(A)x} = \rmo(t^{-\alpha}) \qquad \text{as $t \to
\infty$.}
\]
\item  If $z=0$ is an accumulation point of $\sigma(A),$ then for any decreasing function
$\epsilon: (0, \infty) \to (0, \infty)$
with $\lim_{t\to \infty}\epsilon(t)= 0$
there exists $y\in \ran(A^\alpha)$ such that
\[
\sup_{t\geq 1}\, \frac{t^{\alpha}  \norm{\Ce_t(A)y}}{\epsilon(t)}=\infty.
\]
\end{aufzi}
\end{thm}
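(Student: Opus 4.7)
The plan is to reduce all three parts to direct applications of Theorems \ref{rates.t.rate-indiv} and \ref{rea.t.main} with $g(z) = z^\alpha$, after verifying the relevant asymptotics of the associated rate function $r = r[g]$. The paper has already recorded that $g$ is a complete Bernstein function, that $g(A)=A^\alpha$ in the extended HP-calculus, and that $r(t) \sim t^{-\alpha}$. Since every complete Bernstein function is special, all hypotheses we will need on $g$ are automatic. The only numerical facts to confirm are $g(0+) = 0$ (immediate from $z^\alpha$ at $z = 0$) and $t\, r(t) \sim t^{1-\alpha} \to \infty$ as $t \to \infty$ (from $\alpha < 1$).

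For part (a), I would feed $g = z^\alpha$ into Theorem \ref{rates.t.rate-indiv}(a): for any $w \in X$ and $x = A^\alpha w$ the estimate $\norm{\Ce_t(A)x} \le 2M r(t)\norm{w}$ holds, and the comparison $r(t) \sim t^{-\alpha}$ from Proposition \ref{rea.p.comp} (choose $z_t = 1/t$, so $t|z_t| = 1$ and $g(z_t) = t^{-\alpha}$) converts this to $\norm{\Ce_t(A)x} = \rmO(t^{-\alpha})$. For part (b), the same setup meets the hypotheses of Theorem \ref{rates.t.rate-indiv}(b), namely $g(0+) = 0$, $tr(t) \to \infty$, and mean ergodicity; conclude $\norm{\Ce_t(A)x} = \rmo(r(t)) = \rmo(t^{-\alpha})$ for every $x \in \ran(A^\alpha)$.

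For part (c), I would apply Theorem \ref{rea.t.main} with $g(z) = z^\alpha$, which is a complete (hence special) Bernstein function whose rate $r$ satisfies $tr(t) \to \infty$. The hypothesis that $0$ is an accumulation point of $\sigma(A)$ is exactly what is required, so the theorem yields a $y \in \ran(A^\alpha)$ with
\[
\sup_{t \ge 1} \frac{\norm{\Ce_t(A)y}}{\epsilon(t)\, r(t)} = \infty.
\]
Replacing $r(t)$ by a constant multiple of $t^{-\alpha}$ (via $r(t) \sim t^{-\alpha}$) absorbs the comparison constant into $\epsilon$ without affecting the conclusion.

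There is no real obstacle: this result is essentially a dictionary translation of the abstract theorems to the concrete Bernstein function $z^\alpha$. The only mild point to check cleanly is the asymptotic $r[z^\alpha](t) \sim t^{-\alpha}$, which follows either from Proposition \ref{rea.p.comp} as above, or directly from $r(t) = \frac{\alpha}{\Gamma(1-\alpha)} \int_0^\infty \min(s/t,1)\, s^{-(\alpha+1)}\, \ud{s} = c_\alpha t^{-\alpha}$ by the substitution $s = tu$.
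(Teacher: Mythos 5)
Your proposal is correct and follows essentially the same route as the paper: the paper likewise takes $g(z)=z^\alpha$, identifies $g(A)=A^\alpha$, obtains $r(t)\sim t^{-\alpha}$ via Proposition \ref{rea.p.comp} with $z_t=1/t$, and then invokes Theorems \ref{rates.t.rate-indiv} and \ref{rea.t.main}. Your direct computation $r(t)=c_\alpha t^{-\alpha}$ by the substitution $s=tu$ is a nice (and exact) alternative to the comparison argument, but it does not change the structure of the proof.
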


If $\ker(A)=\{0\},$ then one can clearly formulate condition  $x\in \ran(A^\alpha)$ in the
above result as $x \in \dom (A^{-\alpha}).$ Thus, the result can
be given a form similar to Theorem \ref{exas.t.log} on logarithmic
rates below.

\subsection*{Logarithmic Rates}

To find a Bernstein function $g$ with an associated rate $r(t) \sim
1/ \log t$ as $t \to \infty$ we consider the
function
\[ g(z) = \frac{z-1}{\log z}, \quad z >0.
\]
By Example  \ref{hpfc.exa.log}, $f = 1/g$ is a Stieltjes function, whence
by Theorem \ref{hpfc.t.stieltjes-cbf} $g$
 is a complete Bernstein function $g \sim (0, 0, \mu)$.
However, a closed expression for $\mu$ seems to be unknown,
cf.~\cite[p. 236-237]{SchilSonVon2010}. By inserting $z_t = 1/t$ we
obtain $r(t) \sim \frac{1 - 1/t}{\log t}$ for the corresponding
rate function  (Proposition \ref{rea.p.comp}). Hence, by Theorem
\ref{rates.t.rate-indiv} we have
\[ \norm{\Ce_t(A)x} = \rmO(1/\log t)\qquad
\text{as $t \to \infty$ for each $x\in \ran(g(A))$}
\]
and $1/\log t$ is optimal on $\ran(g(A))$ under the spectral conditions
of Theorem  \ref{rea.t.main}.

\subsection*{The Operator Logarithm}

Suppose that $A$ is injective. Then Theorem \ref{exas.t.pol} tells that
if $0 < \alpha < 1$ one has the rate ${\rm O}(t^{-\alpha})$ for
$\Ce_t(A)x$ and   $x \in \dom(A^{-\alpha})$, and this rate is optimal
in the sense of part (iii) of Theorem \ref{exas.t.pol}.
We claim that the analogous result holds with
a logarithmic rate for $x \in \dom(\log A)$.

To begin with, let us say a few words on the operator logarithm.
Since $\log z/(z-1)$ is  Stieltjes (Example \ref{hpfc.exa.log}),
the function $(z \log z)/(z-1)$  is a complete Bernstein function
(Theorem \ref{hpfc.t.stieltjes-cbf}). But then by Lemma
\ref{hpfc.l.bf-fc}
\[\Big(\frac{z \log z}{ z-1}\Big) \frac{1}{1 + z}
\in \Wip(\C_+).
\]
Since
\[  \frac{z-1}{z+1} = 1 - \frac{2}{1+z}\in \Wip(\C_+),\]
we have
\[ \frac{z}{(1 + z)^2} \log z = \Big(\frac{z-1}{z+1}\Big)
\Big(\frac{z \log z}{ z-1}\Big) \frac{1}{1 + z}
\in \Wip(\C_+).
\]
Moreover
\[ \frac{z}{(1+z)^2} = \frac{1}{1 + z} - \frac{1}{(1+z)^2} \,\,\in \Wip(\C_+).
\]
Hence, if $A$ is injective, then  $z(1+z)^{-2}$ is a regularizer for $\log z$,
and therefore $\log A$ is defined in the extended HP-calculus for $A$.

One can approach the operator $\log A$ also via resolvents.
Namely, for any fixed $\lambda \in \C$ with $\abs{\im \lambda} >
\pi$ we have the following two representations of the function
$(\lambda - \log z)^{-1}$ for  $z \in \mathbb C_+$:
\begin{align*}
\frac{1}{\lambda - \log z} & = \int_0^\infty \frac{-1}{(\lambda - \log t)^2 + \pi^2}
\frac{\ud{t}}{t +z}
=\int_0^\infty \Big[ \int_0^\infty
\frac{-e^{-ts} \, \ud{t} }{(\lambda - \log t)^2 + \pi^2} \Big] \, e^{-sz}\, \ud{s}.
\end{align*}
The first is the classical (Stieltjes type) representation used by
Nollau \cite{Nollau1969}
to define $\log A$ for sectorial operators $A$. It is proved by a standard contour
deformation argument, cf.~\cite[Lemma 3.5.1]{Haa2006}. The second (Laplace type)
representation follows easily from the first. It is important for us
since
\[ \int_0^\infty  \int_0^\infty \Big|
\frac{-e^{-ts}}{(\lambda - \log t)^2 + \pi^2} \Big| \,\ud{t}\,
\ud{s} =  \int_0^\infty \frac{1}{ \abs{ (\lambda - \log t)^2 +
\pi^2}} \,\frac{\ud{t}}{t} < \infty.
\]
This shows that $(\lambda - \log z)^{-1} \in \Wip(\C_+)$. From abstract
functional calculus theory \cite[Cor.~1.2.4]{Haa2006} it follows that
\[ \Big( \frac{1}{\lambda - \log z }\Big)(A) = (\lambda - \log A)^{-1},
\]
and that our definition of $\log A$ yields the same operator as
the sectorial functional calculus \cite[Remark~3.3.3]{Haa2006}.

\smallskip

Let us now turn to the problem of rates for $\Ce_t(A)x$ when
$x\in \dom(\log A)$. Since $(z-1)/\log z$ is a Bernstein function, the
following abstract result is useful.

\begin{thm}\label{exas.t.gen}
Let $-A$ be the generator of a bounded
$C_0$-semigroup $(T(s))_{s \ge 0}$ on a Banach space $X$
with $M := \sup_{s \ge 0} \norm{T(s)}$. Furthermore, let
$f$ be a function such that $f(A)$ is defined in the extended HP-calculus
and, for some $0 \not= \lambda\in \C$,
\[ g(z) := \frac{z-\lambda}{f(z)} \quad \text{is a Bernstein function.}
\]
Then the following statements hold:
\begin{aufzi}
\item  For each $x\in \dom(f(A))$
 \[ \norm{\Ce_t(A)x}\le  \frac{c M }{\abs{f(1/t)}}
 \, (\norm{x} + \norm{f(A)x}) \qquad
(t \ge 1),
\]
where the constant $c$ depends only on $f$ and $\lambda$.
\item  If the semigroup
is mean ergodic and $t /\abs{f(1/t)} \to \infty$ as $t \to \infty$,
then
\[
\norm{\Ce_t(A)x} = \rmo\left( \frac{1}{\abs{f(1/t)}}\right)
\quad \text{for each $x\in  \dom(f(A))$.}
\]
\end{aufzi}
\end{thm}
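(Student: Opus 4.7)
The plan is to exploit the factorization $g \cdot f = z - \lambda$ inside the extended HP-calculus. Set $h_t := \Ce_t \cdot g$; by Theorem~\ref{rates.t.rate} this lies in $\Wip(\C_+)$ with $\norm{h_t(A)} \le 2M r(t)$, where $r := r[g]$. Since $h_t \cdot f = \Ce_t (z-\lambda) = z\Ce_t - \lambda \Ce_t$, the product rule \eqref{hpfc.e.prod} yields, for $x \in \dom(f(A))$,
\[
h_t(A) f(A) x = (h_t f)(A) x = [z\Ce_t](A) x - \lambda \Ce_t(A) x = t^{-1}(I - T(t)) x - \lambda \Ce_t(A) x.
\]
Rearranging gives the key identity
\[
\lambda \Ce_t(A) x = t^{-1}(I - T(t)) x - h_t(A) f(A) x. \qquad (\ast)
\]

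For part (i), take norms in $(\ast)$: $|\lambda|\,\norm{\Ce_t(A) x} \le (M+1) t^{-1} \norm{x} + 2 M r(t) \norm{f(A) x}$. The task is to replace both $r(t)$ and $t^{-1}$ by $1/|f(1/t)|$ up to constants depending only on $f$ and $\lambda$. Applying Proposition~\ref{rea.p.comp} at $z = 1/t$ with $\alpha = \beta = 1$ yields the equivalence $r(t) \sim g(1/t) = |1/t - \lambda|/|f(1/t)|$; for $t \ge 1$ the numerator is bounded by $1 + |\lambda|$, so $r(t) \le C_1/|f(1/t)|$. For the remainder term, the monotonicity of $t \mapsto tr(t)$ from Theorem~\ref{rates.t.char} gives $r(t) \ge r(1)/t$ on $[1,\infty)$; combined with the lower estimate of Proposition~\ref{rea.p.comp} this forces $1/t \le C_2/|f(1/t)|$. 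Collecting these bounds produces the estimate claimed in (i).

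For part (ii), the same equivalence $r(t) \sim g(1/t)$ together with $|1/t - \lambda| \to |\lambda| \ne 0$ translates the hypothesis $t/|f(1/t)| \to \infty$ into $t r(t) \to \infty$. Mean ergodicity then allows me to invoke the argument of Theorem~\ref{rates.t.rate-indiv}(b): the uniformly bounded family $S_t := r(t)^{-1}(\Ce_t \cdot g)(A) = r(t)^{-1} h_t(A)$ converges to $0$ strongly on $X$. Evaluating this strong convergence at the vector $f(A) x$ gives $\norm{h_t(A) f(A) x} = \rmo(r(t))$, while $t^{-1} \norm{(I - T(t)) x} = \rmO(t^{-1}) = \rmo(r(t))$ since $t r(t) \to \infty$. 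Substituting both terms into $(\ast)$ and using $r(t) \sim 1/|f(1/t)|$ one final time yields the $\rmo$-estimate.

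The main obstacle is the bookkeeping of constants when passing between the rate $r(t)$ associated with the Bernstein representation of $g$ and the quantity $1/|f(1/t)|$ appearing in the statement. This hinges crucially on the choice $z = 1/t$ lying in the admissible range of Proposition~\ref{rea.p.comp}, and on the monotonicity of $tr(t)$ from Theorem~\ref{rates.t.char} being strong enough to absorb the $t^{-1}$ remainder coming from $[z\Ce_t](A) x = t^{-1}(I - T(t)) x$; in part (ii) the subtler point is that one invokes not the stated conclusion of Theorem~\ref{rates.t.rate-indiv}(b) but rather the strong convergence $r(t)^{-1} h_t(A) \to 0$ established within its proof.
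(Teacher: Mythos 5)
Your proposal reproduces the paper's argument essentially verbatim: the same identity $\lambda \Ce_t(z) = \tfrac{1}{t}\,[tz\Ce_t(z)] - (g\Ce_t)(z)f(z)$ inserted via the product rule, the same bound $\norm{(g\,\Ce_t)(A)}\le 2Mr(t)$ from Theorem \ref{rates.t.rate}, the same absorption of the $t^{-1}$ remainder using $r(t)\ge r(1)/t$ from Theorem \ref{rates.t.char}, and the same translation $r(t)\sim \abs{g(1/t)}$ via Proposition \ref{rea.p.comp}; for b) the paper simply refers to the proof of Theorem \ref{rates.t.rate-indiv}, which is precisely the argument you spell out. The only caveat --- shared with the paper's own one-line treatment of b) --- is that the strong convergence $S_t\to 0$ needs $g(0+)=0$ to handle $\ker(A)$ (it holds in the intended applications such as $g=(z-1)/\log z$, but is not formally implied by the stated hypotheses).
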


\begin{proof}
We write $r = r[g]$.
From the definition of $g$ we obtain $\lambda = z - g(z)f(z)$. Multiplying with
$\Ce_t(z)$ yields
\[ \lambda \Ce_t (z) = \frac{tz \Ce_t (z)}{t} - (g(z)\Ce_t (z)) f(z) \qquad (t > 0)
\]
and inserting $A$  yields
\[ \abs{\lambda} \norm{\Ce_t(A)x}
\le \frac{M+1}{t} \norm{x} + 2Mr(t) \norm{f(A)x} \qquad (t > 0,\, x\in \dom(f(A))).
\]
Since $r(t) \ge r(1)/t$ for $t \ge 1$ by Theorem \ref{rates.t.char} we arrive at
\[  \norm{\Ce_t(A)x} \le  r(t) \, \frac{2M}{\abs{\lambda}}
\left( \frac{1}{r(1)}  + 1\right) \, \big( \norm{x} + \norm{f(A)x}\big)
\qquad(t \ge 1).
\]
By Proposition  \ref{rea.p.comp} and its proof we have
\[ r(t) \le e \abs{g(1/t)} = e \frac{ (1/t) - \lambda}{\abs{f(1/t)}}
\le \frac{e(1 + \abs{\lambda})}{\abs{f(1/t)}}
\]
Combining this with the previous we obtain
\[ \norm{\Ce_t(A)x} \le \frac{1}{\abs{f(1/t)}} \,
\frac{4Me}{\abs{\lambda}}
\left( \frac{1}{r(1)}  + 1\right) \, \big( \norm{x} + \norm{f(A)x}\big)
\qquad(t \ge 1)
\]
for $x\in \dom(f(A))$, proving a).
Then b) follows   from the above
computations by an argument similar to that from
the proof of Theorem \ref{rates.t.rate-indiv}.
\end{proof}

\begin{remark}
Let  $g\not=0$ be a complete Bernstein function with $\lim_{t \nearrow \infty}
g(t)/t = 0$. Then by Theorem \ref{hpfc.t.cbf-sf} it follows
that $f(z):= g(z) - g(1/z)$ is such that $f(z)/(z-1)$ is Stieltjes.
As in the case of the logarithm, it follows from Theorem \ref{hpfc.t.stieltjes-cbf}
that $(z-1)/f(z)$  and  $q(z) := z f(z)/(z-1)$ are
complete Bernstein functions. Hence
\[ \frac{z}{(1+z)^2}f(z) = \Big(\frac{z-1}{z+1}\Big)\, q(z)\,
 \frac{1}{1 +z} \in \Wip(\C_+),
\]
and as above it follows that if $A$ is injective, then $f(A)$ is defined
in the extended HP-calculus for $A$.
Hence, for injective $A$,  Theorem \ref{hpfc.t.cbf-sf}
provides a large class of functions satisfying the conditions of
Theorem \ref{exas.t.log}.
\end{remark}

We are now in a position to state and prove our final result.

\begin{thm}\label{exas.t.log}
Let $-A$ be the generator of a bounded $C_0$-semigroup on a Banach
space $X$, and suppose that $A$ is injective.
\begin{aufzi}
\item  For each $x\in \dom(\log A)$
\[ \norm{\Ce_t(A)x} = \rmO\Big(\frac{1}{\log t}\Big) \qquad
\text{as $t \to \infty$.}
\]
\item If $\ran(A)$ is dense in $X$ then for each $x\in \dom(\log A)$
\[ \norm{\Ce_t(A)x} = \rmo\Big(\frac{1}{\log t}\Big) \qquad
\text{as $t \to \infty$.}
\]
\item  If $z=0$ is an accumulation point of $\sigma(A),$ then for any decreasing
function $\epsilon: (0,
\infty) \to (0, \infty)$ with $\lim_{t\to \infty}\epsilon(t)=
0$ there exists $y\in \dom(\log A)$ such that
\[
\sup_{t\geq 1}\, \frac{\log t \norm{\Ce_t(A)y}}{\epsilon(t)}=\infty.
\]
\end{aufzi}
\end{thm}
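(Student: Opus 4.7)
For (i) and (ii), I will apply Theorem \ref{exas.t.gen} with $f(z) = \log z$ and $\lambda = 1$, so that $g(z) := (z-1)/\log z$. By Example \ref{hpfc.exa.log}, the function $(\log z)/(z-1)$ is Stieltjes, hence by Theorem \ref{hpfc.t.stieltjes-cbf} its reciprocal $g$ is a complete Bernstein function, in particular a Bernstein function; and the paper has already established that $\log A$ is defined in the extended HP--calculus whenever $A$ is injective. Since $\abs{f(1/t)} = \log t$ for $t \geq 1$, part (i) is then immediate from Theorem \ref{exas.t.gen}(a). For part (ii), injectivity of $A$ together with density of $\ran(A)$ yields $X = \ker(A) \oplus \cls{\ran}(A) = \{0\} \oplus X$, so by the equivalence cited in the introduction the semigroup is mean ergodic; combined with $t/\log t \to \infty$, Theorem \ref{exas.t.gen}(b) delivers the $\rmo(1/\log t)$ bound.

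For part (iii), I will apply Theorem \ref{rea.t.main} to the same complete (hence special) Bernstein function $g(z) = (z-1)/\log z$. To check the hypothesis, Proposition \ref{rea.p.comp} (with $z_t = 1/t$) gives
\[
r[g](t) \sim g(1/t) = \frac{1 - 1/t}{\log t} \sim \frac{1}{\log t} \qquad (t \to \infty),
\]
so $t \, r[g](t) \sim t/\log t \to \infty$. Theorem \ref{rea.t.main} then produces $y \in \ran(g(A))$ with
\[
\sup_{t \geq 1}\,\frac{\norm{\Ce_t(A)y}}{\epsilon(t)\,r[g](t)} = \infty,
\]
which, using $r[g](t) \sim 1/\log t$, is exactly the conclusion of (iii) modulo the fact that the theorem as stated produces $y$ only in $\ran(g(A))$ rather than in $\dom(\log A)$.

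The main technical issue is therefore to upgrade $y \in \ran(g(A))$ to $y \in \dom(\log A)$. Inspection of the proof of Theorem \ref{rea.t.main} shows that the vector produced is of the form $y = g(A)x$ with $x \in \dom(A)$, and $\dom(A) \subseteq \dom(g(A))$ by Corollary \ref{hpfc.c.bf-fc}. My plan is to take the regularizer $e(z) := z/(1+z)^2$ for $\log$, which lies in $\Wip(\C_+)$ together with $e \cdot \log$ (both facts established in the discussion preceding Theorem \ref{exas.t.gen}). Since $e \log \in \Wip(\C_+)$ and $g$ is regularizable, the product rule \eqref{hpfc.e.prod} gives on $\dom(g(A))$
\[
(e\log)(A)\,g(A)x \,=\, (e \log g)(A)\,x \,=\, \bigl(e(z-1)\bigr)(A)\,x.
\]
Factoring $e(z-1) = e(z)\cdot(z-1)$ and using \eqref{hpfc.e.prod} once more (with the bounded $e$ and the regularizable $z-1$), one sees that $\ran(e(A)) \subseteq \dom(A)$ and
\[
\bigl(e(z-1)\bigr)(A)x \,=\, e(A)(A-I)x \,\in\, \ran(e(A))
\]
for every $x \in \dom(A)$. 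By the very definition of the extended calculus this yields $y \in \dom(\log A)$ with $\log(A)\,y = (A-I)x$, and part (iii) follows. The main obstacle is really just this careful bookkeeping with the extended HP--calculus; the rate estimates themselves are essentially plug-ins to the general machinery already built.
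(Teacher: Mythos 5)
Your parts a) and b) coincide with the paper's own argument: both are direct applications of Theorem \ref{exas.t.gen} with $f=\log$, $\lambda=1$, $g(z)=(z-1)/\log z$, together with the observation that injectivity plus dense range gives mean ergodicity. For part c), however, you take a genuinely different route, and it is correct. The paper does not invoke Theorem \ref{rea.t.main} as a black box; it re-runs that argument from scratch with the bounded function $(\lambda-\log z)^{-1}$, $\lambda=i\tau$, $\tau>\pi$: the elementary estimate $\abs{\lambda-\log z}^2\le 2(\log\abs{z})^2$ for $\abs{z}\le e^{-\tau-\pi/2}$ plus spectral inclusion give $\norm{\Ce_{t_n}(A)(\lambda-\log A)^{-1}}\ge \delta/(\sqrt{2}\,\log t_n)$, and uniform boundedness then produces $x$ with $y=(\lambda-\log A)^{-1}x$ automatically in $\dom(\log A)$. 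You instead apply Theorem \ref{rea.t.main} to $g$ and upgrade the resulting $y=g(A)x$, $x\in\dom(A)$, to an element of $\dom(\log A)$ via the product-rule computation $(e\log)(A)g(A)x=(e(z)(z-1))(A)x=e(A)(A-I)x\in\ran(e(A))$ with $e(z)=z/(1+z)^2$; this is sound (note $e(z)(z-1)=1-3/(1+z)+2/(1+z)^2\in\Wip(\C_+)$), and it crucially uses $x\in\dom(A)$ --- which the \emph{statement} of Theorem \ref{rea.t.main} does not supply but its proof does, as you correctly observe. Your approach buys reuse of the general machinery at the price of opening up that proof and doing extra domain bookkeeping; the paper's approach is self-contained and lands directly in $\dom(\log A)$. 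One cosmetic point: $r[g](t)\sim 1/\log t$ only for large $t$, so to pass from $\sup_{t\ge 1}\norm{\Ce_t(A)y}/(\epsilon(t)r(t))=\infty$ to the stated conclusion you should remark that the supremum over any bounded interval $[1,T]$ is finite (the numerator is bounded by $M\norm{y}$ and $\epsilon(t)r(t)$ is bounded below there), so the divergence must occur as $t\to\infty$, where the equivalence applies.
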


\begin{proof}
As pointed out above, $(z-1)/\log z$ is a Bernstein function,
so the statements a)  and b) follow from Theorem \ref{exas.t.gen}.
The argument for c) follows closely the proof of Theorem \ref{rea.t.main}.
We fix $\tau > \pi$ and $\lambda := i\tau$.
Suppose that $0 \not=z = \abs{z} e^{i\theta} \in \sigma(A)$ such that
$\abs{z} \le e^{-(\tau + \pi/2)}$.
Then $\abs{\theta}\le \pi/2$ and
\[ \abs{\lambda - \log z}^2 = (\log \abs{z})^2 + (\tau - \theta)^2
\le (\log \abs{z})^2 + \Big(\tau + \frac{\pi}{2}\Big)^2
\le 2 (\log\abs{z})^2.
\]
Since  $(\lambda - \log z)^{-1} \in \Wip(\C_+)$ we can apply the spectral inclusion
Theorem  \ref{hpfc.t.spin} and infer --- with $t := 1/\abs{z}$ --- that
\[ \norm{\Ce_t(A)(\lambda- \log A)^{-1}}
\ge \abs{\frac{\Ce_t(z)}{\lambda - \log z}}
\ge \frac{\delta}{\sqrt{2} \abs{\log\abs{z}}},
\]
where $\delta := \inf_{\abs{\varphi} \le \pi/2}
  \big|1 - e^{-e^{i\varphi}}\big| > 0$ as in the proof of
Theorem \ref{rea.t.main}.

Now, by assumption there is a sequence $0 \not= z_n \in \sigma(A)$
with $z_n \to 0$. Without loss of generality we may suppose that
$\abs{z_n} \le e^{-\tau - \pi/2}$ for all $n \in \N$.
Hence, with $t_n := 1/\abs{z_n} \to \infty$,
\[ \norm{\Ce_{t_n}(A) (\lambda - \log A)^{-1}} \ge
\frac{\delta}{\sqrt{2} \log t_n} \qquad (n\in \N).
\]
The uniform boundedness principle yields $x \in X$ such that for
$y := (\lambda - \log A)^{-1}x \in \dom(\log A)$ we have
\[
\sup_{n\in \N}\,\frac{\log (t_n) \norm{\Ce_{t_n}(A)x}}{\epsilon(t_n)}
=\infty,
\]
and this concludes the proof.
\end{proof}

\appendix

\section{}

\begin{thm}\label{int.t.rates1}
Let $-A$ be the generator of a  bounded $C_0$-semigroup
$(T(s))_{s\geq 0}$ on a Banach space $X$.
Suppose $\varphi:(0,\infty) \to
(0,\infty)$ is a function such that $\varphi(t) \searrow 0$ as $t\to\infty$, and
\[
\norm{\Ce_t(A)x} = \rmO(\varphi(t)) \quad \mbox{for every}\;\;x\in
\dom_\infty(A):=\cap_{n=0}^\infty \dom(A^n).
\]
Then
$A$ is invertible.
\end{thm}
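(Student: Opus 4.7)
My plan would be to reduce the statement to the setting of Proposition~\ref{int.p.rates}(c) by showing that the hypothesis forces $\Ce_t(A) \to 0$ \emph{in operator norm} as $t \to \infty$. Once this is established, the argument used for part (c) of that proposition applies verbatim: Lin's theorem yields $\ran(A)$ closed, the strong convergence $\Ce_t(A)x \to 0$ on $X$ (which is an immediate consequence of the norm convergence) gives $\ker(A) = 0$ and $\cls{\ran}(A) = X$, and hence $A$ is bijective, so invertible by the closed graph theorem. The difficulty is that the hypothesis provides only a \emph{pointwise} bound on the (dense) subspace $\dom_\infty(A)$, not on all of $X$, so a direct application of uniform boundedness is unavailable.

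The first thing I would do is to convert the pointwise hypothesis on $\dom_\infty(A)$ into a quantitative estimate. Endowed with the graph seminorms $x \mapsto \|A^k x\|$ for $k \ge 0$, the space $\dom_\infty(A)$ is a Fr\'echet space, each $\Ce_t(A)$ restricts to a continuous linear map into $X$, and the family $(\varphi(t)^{-1}\Ce_t(A))_{t>0}$ is pointwise bounded there by hypothesis. Banach--Steinhaus in Fr\'echet spaces then yields $n\in \N$ and $C > 0$ such that
\[
 \|\Ce_t(A) x\| \,\le\, C\varphi(t) \sum_{k=0}^n \|A^k x\| \qquad  (x\in \dom_\infty(A),\, t > 0).
\]

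Next, I would introduce a smoothing device embedding arbitrary $y\in X$ into $\dom_\infty(A)$ in a controlled way. Fix $\rho \in \Ce^\infty_c((0,1))$ with $\int_0^1 \rho = 1$ and set $S := \hat\rho(A) = \int_0^1 \rho(s)T(s)\,\ud{s}$, so $\hat\rho \in \Wip(\C_+)$. Integration by parts (using $\rho(0)=\rho(1)=0$) together with closedness of $A$ shows that $Sy \in \dom(A^k)$ for every $k$ and every $y\in X$, with $A^k Sy = (-1)^k \int_0^1 \rho^{(k)}(s) T(s)y\,\ud{s}$; in particular $Sy \in \dom_\infty(A)$ and $\|A^k Sy\| \le M\|\rho^{(k)}\|_{L^1}\|y\|$, where $M := \sup_{s\ge 0}\|T(s)\|$. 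Feeding this into the Banach--Steinhaus bound gives $\|\Ce_t(A) S\| \le C'\varphi(t)$ for a constant $C'$ depending only on $\rho$ and $n$. For the complement I would apply the product rule of the HP-calculus: $\Ce_t(A)(I-S) = [\Ce_t(1-\hat\rho)](A)$, whose norm is at most $M\|\Ce_t(1-\hat\rho)\|_{\Wip}$. The latter equals the total variation of $\mu_t \ast (\delta_0 - \rho\,\ud{s})$ with $\mu_t = t^{-1}\chi_{[0,t]}\,\ud{s}$, and a direct computation shows this convolution measure is supported in $[0,1]\cup[t,t+1]$ and has total variation at most $2/t$ for $t \ge 1$. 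Combining the two estimates,
\[
 \|\Ce_t(A)\| \,\le\, C'\varphi(t) + \frac{2M}{t} \,\longrightarrow\, 0 \qquad (t \to \infty),
\]
and the conclusion follows as explained at the outset.

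The main obstacle will be to calibrate the splitting $I = S + (I-S)$ so that $S$ supplies enough $\dom_\infty$-smoothness (at a polynomial cost in $\|\rho^{(k)}\|_{L^1}$) \emph{and simultaneously} the complementary error $\|\Ce_t(A)(I-S)\|$ decays in $t$. The tuning is delivered by taking the smoothing bump to be supported in the \emph{compact} interval $(0,1)$: this forces $\|\Ce_t(1-\hat\rho)\|_{\Wip}$ to decay like $1/t$, and that decay is precisely what is needed to promote a pointwise estimate on a dense core into an operator-norm estimate on $X$.
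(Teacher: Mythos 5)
Your proof is correct, and it shares only its first move with the paper's. The opening step --- Banach--Steinhaus on the Fr\'echet space $\dom_\infty(A)$ to convert the pointwise hypothesis into $\norm{\Ce_t(A)x}\le C\varphi(t)\norm{x}_{\dom(A^n)}$ for some fixed $n$ --- is exactly what the paper does. From there the routes diverge. The paper stays in the graph-norm picture: it extends the estimate from $\dom_\infty(A)$ to all of $\dom(A^n)$ by a core argument, combines it with $\norm{A^j\Ce_t(A)x}\le \frac{M+1}{t}\norm{A^{j-1}x}$ to get $\Ce_t(A)\to 0$ in $\Lin(\dom(A^n))$, transfers this to $X$ by the similarity $(\Id+A)^{-n}$, and then \emph{reproves} the closed-range statement by hand via the invertibility of $\Id-\Ce_t(A)$ and the resulting lower bound $\norm{x}\le \norm{S^{-1}}(CMt^2/2)\norm{Ax}$. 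You instead upgrade directly to operator-norm decay on $X$ by the mollifier splitting $\Id=\hat\rho(A)+(\Id-\hat\rho(A))$: the smoothing factor maps $X$ boundedly into $\dom_\infty(A)$ with controlled graph norms, and your $\Wip$-norm computation for the complement is correct --- the convolution $\mu_t\ast(\delta_0-\rho\,\ud{s})$ cancels on $[1,t]$ and the surviving mass on $[0,1]\cup[t,t+1]$ is $\rmO(1/t)$ (take $\rho\ge 0$ to get the clean constant $2$). You then finish by citing Lin's uniform ergodic theorem as packaged in Proposition~\ref{int.p.rates}(c). Both arguments are sound; yours buys a shorter, more conceptual reduction to an already-stated result at the cost of an external citation, while the paper's is self-contained (and its graph-norm convergence also yields $\norm{\Ce_t(A)}_{\Lin(X)}\to 0$ implicitly, by the same similarity). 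One small bookkeeping point, common to both write-ups: the hypothesis is only asymptotic, so to get pointwise boundedness of $\varphi(t)^{-1}\Ce_t(A)x$ uniformly in $t\ge 1$ one should absorb the initial range $1\le t\le t_0(x)$ into $c(x)$ using $\norm{\Ce_t(A)x}\le M\norm{x}$ and the monotonicity of $\varphi$.
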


\begin{proof}
By assumption it follows that $\dom_{\infty}(A)\subset \cls{\ran}(A).$ Since $-A$ is the generator of a $C_0$-semigroup,
$\dom_{\infty}(A)$ is dense in $X$, so that $\cls {\ran}(A)=X.$ Thus
since $\ker(A)=\{0\},$ it suffices to prove that $\ran (A)$ is closed.
By hypothesis,
for $x\in \dom_\infty(A)$ there is $c(x)$ such that
\begin{equation}\label{app.e.bound}
\norm{\Ce_t(A)x} \leq c(x)\varphi(t) \qquad (t\geq 1).
\end{equation}
For $n \in \N$ we consider $\dom(A^n)$ as a Banach space with the graph norm
\[
\norm{x}_{\dom(A^n)}:= {\sum}_{j=0}^n \norm{A^j x}\qquad (x\in \dom(A^n)).
\]
The space $\dom_\infty(A)$ is a Fr\'echet space with respect to
the increasing sequence of norms $(\norm{\cdot}_{\dom(A^n)})_{n \ge 0}$.
By \eqref{app.e.bound} and the principle of uniform boundedness for
Fr\'echet spaces \cite[Theorem 2.6]{RudFA}, we obtain that there exist
$n\in \N \cup \{0\}$ and $c>0$ such that
\[
\norm{\Ce_t(A)x} \leq c \varphi(t) \norm{x}_{\dom(A^n)}\quad \text{for all}
\quad t\geq 1,\,
x\in \dom_\infty(A).
\]
Since $\dom_\infty(A)$ is a core for the closed operator $A^n$, we
then have
\begin{equation}\label{app.e.bound-n}
\norm{\Ce_t(A)x} \leq  c \varphi(t) \norm{x}_{\dom(A^n)}\quad
\text{for all}\quad t\geq 1,\, x\in \dom(A^n).
\end{equation}
Now  $\Ce_t(A)X \subseteq \dom(A)$ and
\[ tA \Ce_t(A) = A \int_0^t T(s)\, \ud{s} = \Id - T(t) \quad \text{for all}
\quad t > 0.
\]
Hence we have  for every $1\le j\le n$
\[ \norm{A^j\Ce_t x}  \le \frac{\norm{\Id - T(t)}}{t} \norm{A^{j-1}x}
\le \frac{M+1}{t} \norm{A^{j-1}x} \qquad
(x\in \dom(A^n),\, t > 0),
\]
where $M:=\sup_{s \ge 0} \norm{T(s)}$.
From \eqref{app.e.bound-n} it then follows
that
\[ \norm{\Ce_t(A)}_{\dom(A^n) \to \dom(A^n)} \le c\vphi(t) + \frac{M+1}{t},
\]
where the right-hand side tends to zero as $t \to \infty.$
Therefore,  for $t\geq 1$ large enough the operator
\[ \Id - \Ce_t(A):\dom(A^n)\pfeil \dom(A^n)
\]
is invertible. This operator is similar to the operator
\[ \Id - \Ce_t(A): X \pfeil X
\]
by virtue of the isomorphism $(\Id + A)^{-n}: X \to \dom(A^n)$.
Multiplying with $t$ yields that $S := \int_0^t (\Id - T(s))\, \ud{s}$
is invertible on $X$. But for $x \in \dom(A)$
\[ Sx = \int_0^t (\Id - T(s))x\, \ud{s} = \int_0^t \int_0^s T(r)Ax\, \ud{r}\, \ud{s}
= \int_0^t (t - r)T(r)Ax\, \ud{r},
\]
and hence
\[ \norm{x} \le \norm{S^{-1}} \Big\|\int_0^t (\Id - T(s))x \, \ud{s} \Big\|
\le (CMt^2/2) \norm{Ax}
\]
for large enough $t$ and for all $x\in \dom(A)$. This concludes
the proof.
\end{proof}

\section{} \label{AppendixB}

In this Appendix we characterize those  rate functions $r$
that appear as $r=[g]$ for some
{\em complete} Bernstein function. We need
the following lemma.

\begin{lemma}\label{l.rate-invlap}
Let $g$ be a Bernstein function and
let $r = r[g]$ the associated rate as in \eqref{rates.e.r}.
Then, with $\tilde{r}(t) = tr(t)$,
\[ (\Lap \tilde{r})(z) = \frac{g(z) - \frac{g(0+)}{2}}{z^2} \qquad (\re z > 0).
\]
In other words, $tr(t)$ is the inverse Laplace transform of $[g(z)- g(0+)/2]/z^2$.
\end{lemma}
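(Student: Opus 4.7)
The plan is to perform a direct Laplace transform computation, using the formula for $\tilde{r}(t) = tr(t)$ obtained in the proof of Theorem~\ref{rates.t.char} together with Fubini's theorem and the defining representation \eqref{hpfc.e.bf} of the Bernstein function $g$.

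First I would recall that, by the elementary Fubini calculation noted right after the definition \eqref{rates.e.r} (i.e. $\int_{0+}^\infty \min(s/t,1)\,\mu(\ud{s}) = \frac{1}{t}\int_0^t \mu(r,\infty)\,\ud{r}$), we can rewrite
\[
\tilde{r}(t) = tr(t) = \frac{at}{2} + b + \int_0^t \mu(r,\infty)\,\ud{r}\qquad (t>0).
\]
Since $a,b\ge 0$ and $\int_0^t \mu(r,\infty)\,\ud{r} \le t\,r(t)$ grows at most like a multiple of $t$ (by monotonicity of $\tilde{r}/t$ and the fact that $\tilde r$ is concave and finite, cf.~Theorem~\ref{rates.t.char}), the function $\tilde{r}$ is polynomially bounded and hence Laplace-transformable on $\re z > 0$.

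Next I would compute the Laplace transform summand by summand. The first two terms give
\[
\Lap(at/2)(z) = \frac{a}{2z^2},\qquad \Lap(b)(z) = \frac{b}{z}.
\]
For the third term, the integration-against-Lebesgue structure gives $\Lap\bigl(\int_0^t \mu(r,\infty)\,\ud{r}\bigr)(z) = \frac{1}{z}\Lap(\mu(t,\infty))(z)$, and by Fubini (which is justified because $\mu$ integrates $s/(1+s)$),
\[
\int_0^\infty e^{-zt}\mu(t,\infty)\,\ud{t} = \int_{0+}^\infty\int_0^s e^{-zt}\,\ud{t}\,\mu(\ud{s}) = \int_{0+}^\infty \frac{1-e^{-sz}}{z}\,\mu(\ud{s}) = \frac{g(z)-a-bz}{z},
\]
using the representation \eqref{hpfc.e.bf} of $g$.

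Finally I would add these three contributions:
\[
(\Lap\tilde{r})(z) = \frac{a}{2z^2} + \frac{b}{z} + \frac{g(z)-a-bz}{z^2} = \frac{g(z) - a/2}{z^2},
\]
and conclude by invoking the identity $g(0+) = a$ (established after formula \eqref{hpfc.e.bf}). There is no genuine obstacle here: the only thing to watch is the justification of the two applications of Fubini and the Laplace-transformability of $\tilde{r}$, both of which are immediate from the integrability condition $\int_{0+}^\infty \frac{s}{1+s}\,\mu(\ud{s}) < \infty$ satisfied by the Bernstein measure $\mu$.
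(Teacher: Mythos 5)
Your proposal is correct and follows essentially the same route as the paper: a direct termwise Laplace transform of $tr(t)=at/2+b+\int_{0+}^\infty\min(s,t)\,\mu(\ud{s})$ justified by Fubini/Tonelli and the representation \eqref{hpfc.e.bf}. The only (cosmetic) difference is that you first pass to the equivalent form $\int_0^t\mu(\rho,\infty)\,\ud{\rho}$ and use $\Lap\bigl(\int_0^t F\bigr)=z^{-1}\Lap F$, whereas the paper swaps the order of integration on $\int_{0+}^t s\,\mu(\ud{s})+t\,\mu(t,\infty)$ directly; both computations are the same Fubini argument reorganized.
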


\begin{proof}
Suppose first that $g(0+)= 0$, so that
$g(\lambda)=b\lambda +\int_{0+}^\infty (1-e^{-\lambda s})\mu(\ud{s})$.
Then
\begin{align*}
\int_0^\infty & tr(t)e^{-\lambda t}\,\ud{t}
= \int_0^\infty
e^{-\lambda t} \left[b+ \int_{0+}^t s \mu(\ud{s})\,\ud{t}
+ t\int_{t+}^\infty
\mu(\ud{s})\right]\,\ud{t}\\
&=\frac{b}{\lambda}+\int_{0+}^\infty \left(\int_s^\infty
e^{-\lambda t}\,\ud{t}\right)\,s \mu(\ud{s})+ \int_{0+}^\infty
\left(\int_0^s t e^{-\lambda t} \ud{t}\right)\, \mu(\ud{s})\\
&=\frac{b}{\lambda}+\frac{1}{\lambda}\int_{0+}^\infty s
e^{-\lambda s}\,\mu(\ud{s})+ \int_{0+}^\infty \left(-\frac{s
e^{-\lambda s}}{\lambda}+ \frac{1-e^{-\lambda
s}}{\lambda^2}\right)\, \mu(\ud{s})
\\
&=\frac{b}{\lambda}+\frac{1}{\lambda^2} \int_{0+}^\infty
(1-e^{-\lambda s})\, \mu(\ud{s})=\frac{g(\lambda)}{\lambda^2}
\end{align*}
for all $\lambda>0$. In the general case we have to add
$\frac{a}{2} \int_0^\infty t e^{-t\lambda}\, \ud{t} = \frac{g(0+)}{2\lambda^2}$
in each step of the computation.
\end{proof}

\begin{cor}
If $r= r[g]$ for a Bernstein function $g$ such that $$g(0+) = \lim_{t \to \infty}r(t) = 0,$$  then $t r(t)$ is
the inverse Laplace transform  of $g(z)/z^2$.
\end{cor}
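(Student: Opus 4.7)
The plan is to deduce this immediately from Lemma \ref{l.rate-invlap}, since essentially no new work is required. First I would observe that the two hypotheses $g(0+) = 0$ and $\lim_{t\to\infty} r(t) = 0$ are in fact equivalent in view of Theorem \ref{rates.t.char}(a), which asserts $\lim_{t\to\infty} r(t) = a/2 = g(0+)/2$. So the corollary's assumptions amount to a single condition: $g(0+) = 0$.

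Next I would simply substitute $g(0+) = 0$ into the formula established in Lemma \ref{l.rate-invlap}, namely
\[
(\Lap \tilde{r})(z) = \frac{g(z) - g(0+)/2}{z^2} \qquad (\re z > 0),
\]
where $\tilde{r}(t) = t r(t)$. The numerator collapses to $g(z)$, so one obtains $(\Lap \tilde{r})(z) = g(z)/z^2$ for $\re z > 0$. Re-interpreting this identity, $t r(t)$ is the (unique) inverse Laplace transform of the function $z \mapsto g(z)/z^2$, where uniqueness follows from continuity of $t \mapsto t r(t)$ on $(0, \infty)$ together with the injectivity of the Laplace transform on continuous Laplace-transformable functions.

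There is essentially no obstacle here: the only thing to verify is that the computation in Lemma \ref{l.rate-invlap} applies, which it does without additional hypotheses. If I wanted to stress the content, I would emphasize that the nontriviality already lies in the lemma (via the Fubini computation in the proof of that lemma) and this corollary merely records the cleanest special case, which is the one most relevant for reading off $r$ from an explicit Bernstein function $g$ with $g(0+) = 0$.
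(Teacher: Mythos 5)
Your proposal is correct and matches the paper's (implicit) argument: the corollary is exactly the special case $g(0+)=0$ of Lemma \ref{l.rate-invlap}, and your observation that the two stated hypotheses coincide via Theorem \ref{rates.t.char}(a) is accurate. Nothing further is needed.
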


Now we can state the main result of this Appendix.

\begin{thm}\label{t.cbf-rate}
A function $r : (0, \infty) \to (0, \infty)$ is of the form
$r = r[g]$ for some complete Bernstein function $g$ if, and only if,
$t \mapsto tr(t)$ is a Bernstein function.
\end{thm}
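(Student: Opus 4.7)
The plan is to combine Lemma \ref{l.rate-invlap} with Theorem \ref{rates.t.char} and Bernstein's theorem, passing between $g$ and $tr(t)$ through one differentiation identity. From the proof of Theorem \ref{rates.t.char}, for any Bernstein function $g \sim (a,b,\mu)$ and $r = r[g]$ one has
\[ tr(t) = b + \frac{a}{2}t + \int_0^t \mu(s,\infty)\,\ud{s}, \qquad D_+(tr(t)) = \frac{a}{2} + \mu(t,\infty). \]
Since a nonnegative function on $(0,\infty)$ is Bernstein exactly when its right derivative is completely monotone, the theorem reduces to the equivalence: \emph{$g$ is a complete Bernstein function if and only if $t \mapsto \mu(t,\infty)$ is completely monotone on $(0,\infty)$.}

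For the forward direction, I would use that a complete Bernstein function $g$ has representing measure $\mu(\ud{s}) = m(s)\,\ud{s}$ with $m$ completely monotone. Writing $m(s) = \int_{(0,\infty)} e^{-sx}\,\rho(\ud{x})$ by Bernstein's theorem and applying Fubini gives
\[ \mu(t,\infty) = \int_t^\infty m(s)\,\ud{s} = \int_{(0,\infty)} \frac{e^{-tx}}{x}\,\rho(\ud{x}) \qquad (t > 0), \]
which is the Laplace transform of a positive measure, hence completely monotone. Adding the constant $a/2$ preserves complete monotonicity, so $D_+(tr(t))$ is completely monotone and, together with $tr(t) \ge 0$, this makes $tr(t)$ a Bernstein function.

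For the converse, Theorem \ref{rates.t.char} first supplies a Bernstein function $g \sim (a,b,\mu)$ with $r = r[g]$, because any Bernstein function is strictly positive, increasing, and concave. The hypothesis that $tr(t)$ is Bernstein means $D_+(tr(t))$ is completely monotone, and since $a/2 = \lim_{t \to \infty} D_+(tr(t))$ (from the proof of Theorem \ref{rates.t.char}), subtracting this limit yields that $t \mapsto \mu(t,\infty)$ is itself completely monotone. Bernstein's theorem gives a positive measure $\sigma$ on $[0,\infty)$ with $\mu(t,\infty) = \int_{[0,\infty)} e^{-tx}\,\sigma(\ud{x})$; the defining integrability $\int_{0+}^\infty \min(1,s)\mu(\ud{s}) < \infty$ forces $\mu(t,\infty) \to 0$ as $t \to \infty$, which rules out any atom of $\sigma$ at the origin. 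Differentiating under the integral then produces
\[ m(t) := -\frac{\ud}{\ud{t}}\mu(t,\infty) = \int_{(0,\infty)} x e^{-tx}\,\sigma(\ud{x}), \]
a Laplace transform, hence completely monotone, so $\mu(\ud{t}) = m(t)\,\ud{t}$ with $m$ completely monotone, i.e.\ $g$ is a complete Bernstein function.

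The main obstacle I anticipate is purely technical: matching the minimal integrability $\int_{0+}^\infty \min(1,s)\mu(\ud{s}) < \infty$ of a Bernstein function with the finiteness needed to justify Fubini in the forward direction and with ruling out $\sigma(\{0\}) > 0$ in the converse. Neither step is deep, but both require careful tracking of the behaviour of $\mu(t,\infty)$ near $0$ and $\infty$, and of how the integrability of $\rho(\ud{x})/x$ near $x=0$ corresponds to that of $m$ near $\infty$.
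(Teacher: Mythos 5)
Your proof is correct, but it follows a genuinely different route from the paper's. The paper stays entirely on the Laplace transform side: it invokes Lemma \ref{l.rate-invlap}, which identifies $tr(t)$ as the inverse Laplace transform of $\bigl(g(z)-g(0+)/2\bigr)/z^2$, and then cites the characterization (Theorem 6.2 of the Schilling--Song--Vondra\v{c}ek book) that $g$ is a complete Bernstein function precisely when $g(\lambda)-g(0+)/2=\lambda^2(\Lap h)(\lambda)$ for some Bernstein function $h$; uniqueness of the Laplace transform then forces $h(t)=tr(t)$, and the converse runs the same citation backwards. You instead work on the measure side, differentiating the explicit formula $tr(t)=b+\tfrac{a}{2}t+\int_0^t\mu(s,\infty)\,\ud{s}$ and reducing the theorem to the equivalence ``$g$ is a complete Bernstein function iff $t\mapsto\mu(t,\infty)$ is completely monotone,'' which you then prove directly from Bernstein's theorem. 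Your version is more self-contained (it uses only Bernstein's theorem rather than the second-Laplace-transform characterization of complete Bernstein functions, and in effect reproves that characterization in the special case needed), at the cost of the technical bookkeeping you correctly flag: excluding an atom of the representing measure at the origin via $\mu(t,\infty)\to 0$, and justifying Fubini and differentiation under the integral. Two small points to tighten: the blanket claim that a nonnegative function is Bernstein exactly when its right derivative is completely monotone needs the function to be (locally) absolutely continuous so that it is the integral of $D_+$ --- this is available here because $tr(t)$ is concave by Theorem \ref{rates.t.char}, but it should be said; and in the converse, what lets you apply Theorem \ref{rates.t.char}(ii) is that the Bernstein function $tr(t)$ is increasing and concave together with the standing hypothesis $r>0$, not strict positivity of Bernstein functions in general.
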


\begin{proof}
Suppose that $r = r[g]$ for some complete
Bernstein function $g$. Then  $g - g(0+)/2$ is
a complete Bernstein function, too, whence
by \cite[Theorem 6.2]{SchilSonVon2010} there exists
a Bernstein function $h$ such that
\[ g(\lambda) - \frac{g(0+)}{2} = \lambda^2
(\Lap h)(\lambda)\qquad (\lambda > 0).
\]
Lemma \ref{l.rate-invlap}
together with the uniqueness  of the Laplace transform
implies that $h(t) = tr(t)$ for $t > 0$, and hence $t \mapsto tr(t)$
is a Bernstein function.

Conversely, suppose that $h(t) = tr(t)$
is a Bernstein function.
Then $a := 2 \lim_{t\to \infty} h(t)/t$ exists
and   $h$ is Laplace transformable. By
\cite[Theorem 6.2]{SchilSonVon2010} again,
\[ g(\lambda) := \frac{a}{2} + \lambda^2 (\Lap h)(\lambda)
\qquad (\lambda > 0)
\]
is a complete Bernstein function. A short computation based
on the Bernstein representation of $h$ yields that
\[ \lim_{\lambda \searrow 0} \lambda^2 (\Lap h)(\lambda)
= \lim_{t\to \infty} \frac{h(t)}{t} = \frac{a}{2},
\]
cf.~the proof of \cite[Theorem 6.2]{SchilSonVon2010}. It follows
that $a = g(0+)$ and, by Lemma \ref{l.rate-invlap} and the uniqueness of the
Laplace transform, $r[g](t) =h(t)/t = r(t)$ for all $t > 0$.
\end{proof}

\vanish{
Let now $r: (0,\infty)\to (0,\infty)$ be a function
satisfying  $\lim_{t\to \infty} r(t) = 0$.

such that
$f(t):=tr(t)$ is a Bernstein function and moreover
\[
\lim_{t \to \infty}\frac{f(t)}{t}=\lim_{t \to \infty}r(t)=0.
\]
Define a  function
\begin{equation}
g(\lambda):=\lambda ^2 \int_0^\infty e^{-\lambda t} f(t)\,\ud{t}.
\label{defb}
\end{equation}

Note that by \cite[Theorem 6.2]{SchilSonVon2010} $g$ is a complete
Bernstein function. Moreover, (see the proof of Theorem $6.2$ from
\cite{SchilSonVon2010}) one has
\[
g(0)=\lim_{t\to\infty}\,\frac{f(t)}{t}=0,
\]
and by Proposition \ref{prop01} we have
\[
\int_0^\infty  e^{-\lambda t} tr[g](t)\,\ud{t}=
\frac{g(\lambda)}{\lambda^2}, \;\;\lambda>0.
\]
From this and (\ref{defb}) we obtain
\[
r[g](t)=\frac{f(t)}{t}=r(t),\;\;t>0.
\]

Thus, as consequences of the above considerations, we derive the
following corollaries of Proposition \ref{prop01}.

\begin{cor}\label{cor01}
Let $r: (0,\infty)\to (0,\infty)$ be a function  such that
$f(t):=tr(t), t \in \mathbb R_+,$ is a Bernstein function (in
particular, this holds if $r$ is a Stieltjes function) and
$\lim_{t \to \infty}r(t)=0.$ Then there exist a complete Bernstein
function $g$ such that $g(0)=0$ and
\[
r[g](t)=r(t),\qquad t>0.
\]
\end{cor}

\begin{cor}\label{cor02}
Let $g$ be a complete Bernstien function, $g(0)=0.$ Then
$tr[g](t), t \in \mathbb R_+,$ is a Bernstein function.
\end{cor}
\begin{proof}
There exists a Bernstein function $f$ such that
\begin{equation}
g(\lambda)=\lambda^2\int_0^\infty e^{-\lambda s} f(s)\,\ud{s}.
\label{defb1}
\end{equation}
By Proposition  \ref{prop01} and \eqref{defb1}, using  the
uniqueness of Laplace transforms, we get
\[
t r[g](t)=f(t), \quad t >0,
\]
so that $t r[g](t), t \in \mathbb R_+,$ is a Bernstein function.
\end{proof}

Let $\mathcal{BF}$ and $\mathcal {CBF}$ stand for classes of
Bernstein functions and complete Bernstein functions respectively.
The Corollaries \ref{cor01} and \ref{cor02} imply that that the
restriction of the mapping
\[
g\mapsto tr[g](t),\qquad g\in\mathcal{BF},
\]
to $\mathcal{CBF}_0:=\{g\in \mathcal{CBF}:\,g(0)=0\}$ is a
bijection from $\mathcal{CBF}_0$ onto
\[
\left \{f\in \mathcal{BF}:\,\lim_{t\to\infty}\frac{f(t)}{t}=0
\right\}.
\]
 In particular, by means of \eqref{defb}, for a
given function $r(t)$ such that
$$t r(t) \in \mathcal{BF}, \quad \lim_{t \to \infty} r(t)=0,$$ one can construct a
function $g\in \mathcal{CBF}$ satisfying $r[g](t)=r(t)$.
}
\vspace{0,2cm}

\begin{center}
{\bf Acknowledgements}
\end{center}
\vspace{0,2cm}

The authors are grateful to the referee for his/her very careful reading of the manuscript and
useful remarks and suggestions.
\vspace{0,2cm}

  \end{document}